\documentclass[11pt, reqno]{amsart}

\usepackage{amsmath, amssymb, amsthm}
\usepackage[margin=1in]{geometry}
\usepackage{verbatim, dsfont}
\usepackage{graphicx}
\usepackage[shortlabels]{enumitem}
\usepackage{listings}
\usepackage[ruled,vlined,linesnumbered]{algorithm2e}

\usepackage{xcolor, float}
\usepackage[colorlinks=true,allcolors=blue]{hyperref}

\usepackage[noabbrev,capitalize]{cleveref}
\crefname{equation}{}{}

\newcommand{\angles}[1]{\left\langle #1 \right\rangle}
\newcommand{\bb}{\mathbb}

\newcommand{\E}{\mathop{\bb E}}
\newcommand{\F}{\bb F}

\newcommand{\ind}{\mathds{1}}
\newcommand{\inv}{^{-1}}

\newcommand{\mc}{\mathcal}

\newcommand{\sm}{\setminus}
\newcommand{\R}{\bb R}

\newcommand{\Z}{\bb Z}
\renewcommand{\P}{\bb P}



\newtheorem{theorem}{Theorem}[section]
\newtheorem{proposition}[theorem]{Proposition}
\newtheorem{lemma}[theorem]{Lemma}

\newtheorem{corollary}[theorem]{Corollary}
\newtheorem{conjecture}[theorem]{Conjecture}

\theoremstyle{definition}
\newtheorem{definition}[theorem]{Definition}
\newtheorem{problem}[theorem]{Problem}

\newtheorem{example}[theorem]{Example}

\theoremstyle{remark}

\newcommand{\abs}[1]{\left\lvert#1\right\rvert}

\newcommand{\wh}{\widehat}

\newcommand{\ol}{\overline}

\newcommand{\tikzGraph}[2]{
\begin{tikzpicture}[every node/.style={circle, fill, inner sep = 1pt},scale={#1}]
#2
\end{tikzpicture}
}

\newcommand{\tikzPtwo}{
\node (a) at (0,0) {};
\node (b) at (1,1) {};
\node (c) at (2,0) {};
\draw (a)--(b)--(c);
}
\newcommand{\tikzEE}{
\node (a) at (0,0) {};
\node (b) at (1,0) {};
\node (c) at (0,1) {};
\node (d) at (1,1) {};
\draw (a)--(b);
\draw (c)--(d);
}
\newcommand{\tikzE}{
\node (a) at (0,0) {};
\node (b) at (1,1) {};
\draw (a)--(b);
}
\newcommand{\tikzKthree}{
\node (a) at (-1,0) {};
\node (b) at (0,1.732) {};
\node (c) at (1,0) {};
\draw (c)--(b)--(a)--(c);
}
\newcommand{\tikzKthreeKthree}{
\node (a) at (0,0) {};
\node (b) at (1,1.732) {};
\node (c) at (2,0) {};
\node (d) at (3,1.732) {};
\node (e) at (4,0) {};
\draw (c)--(b)--(a)--(c);
\draw (c)--(d)--(e)--(c);
}
\newcommand{\tikzCfour}{
\node (a) at (0,0) {};
\node (b) at (1,0) {};
\node (c) at (1,1) {};
\node (d) at (0,1) {};
\draw (a)--(b)--(c)--(d)--(a);
}
\newcommand{\tikzDiamond}{
\node (a) at (0,0) {};
\node (b) at (1,0) {};
\node (c) at (1,1) {};
\node (d) at (0,1) {};
\draw (a)--(b)--(c)--(d)--(a)--(c);
}
\newcommand{\tikzCfive}{
\node (a) at (0,0) {};
\node (b) at (1,0) {};
\node (c) at (2,.5) {};
\node (d) at (1,1) {};
\node (e) at (0,1) {};
\draw (a)--(b)--(c)--(d)--(e)--(a);
}
\newcommand{\tikzThetathree}{
\node (a) at (0,0) {};
\node (b) at (1,0) {};
\node (c) at (2,.5) {};
\node (d) at (1,1) {};
\node (e) at (0,1) {};
\draw (a)--(b)--(c)--(d)--(e)--(a);
\draw (d)--(b);
}
\newcommand{\tikzCsix}{
\node (a) at (0,0) {};
\node (b) at (1,0) {};
\node (c) at (2,0) {};
\node (d) at (2,1) {};
\node (e) at (1,1) {};
\node (f) at (0,1) {};
\draw (a)--(b)--(c)--(d)--(e)--(f)--(a);
}
\newcommand{\tikzKthreeoneone}{
\node (a) at (0,0) {};
\node (b) at (2,0) {};
\node (c) at (2,1) {};
\node (d) at (1,1) {};
\node (e) at (0,1) {};
\draw (c)--(a)--(d)--(a)--(e);
\draw (c)--(b)--(d)--(b)--(e);
\draw (a)--(b);
}
\newcommand{\tikzTrap}{
\node (a) at (0,0) {};
\node (b) at (1,1.732) {};
\node (c) at (2,0) {};
\node (d) at (3,1.732) {};
\node (e) at (4,0) {};
\draw (c)--(b)--(a)--(c);
\draw (c)--(d)--(e)--(c);
\draw (b)--(d);
}
\newcommand{\tikzThetafour}{
\node (a) at (0,0) {};
\node (b) at (1,0) {};
\node (c) at (2,0) {};
\node (d) at (2,1) {};
\node (e) at (1,1) {};
\node (f) at (0,1) {};
\draw (a)--(b)--(c)--(d)--(e)--(f)--(a)--(e);
}
\newcommand{\tikzKthreetwo}{
\node (a) at (0,0) {};
\node (b) at (2,0) {};
\node (c) at (2,1) {};
\node (d) at (1,1) {};
\node (e) at (0,1) {};
\draw (c)--(a)--(d)--(a)--(e);
\draw (c)--(b)--(d)--(b)--(e);
}
\newcommand{\tikzSthree}{
\node (b) at (1,1.732) {};
\node (c) at (2,0) {};
\node (d) at (3,1.732) {};
\node (e) at (4,0) {};
\draw (b)--(c)--(d)--(c)--(e);
}
\newcommand{\tikzSfour}{
\node (a) at (0,0) {};
\node (b) at (1,1.732) {};
\node (c) at (2,0) {};
\node (d) at (3,1.732) {};
\node (e) at (4,0) {};
\draw (a)--(c)--(b)--(c)--(d)--(c)--(e);
}

\newcommand{\tikzPthree}{
\node (a) at (0,0) {};
\node (b) at (.5,1) {};
\node (c) at (1,0) {};
\node (d) at (1.5,1) {};
\draw (a)--(b)--(c)--(d);
}

\newcommand{\tikzPfour}{
\node (a) at (0,0) {};
\node (b) at (.5,1) {};
\node (c) at (1,0) {};
\node (d) at (1.5,1) {};
\node (e) at (2,0) {};
\draw (a)--(b)--(c)--(d)--(e);
}
\newcommand{\tikzExtremal}{
\node (a) at (0,0) {};
\node (b) at (1,0) {};
\node (c) at (2,0) {};
\node (d) at (2,1) {};
\node (e) at (1,1) {};
\node (f) at (0,1) {};
\draw (a)--(b)--(c)--(d)--(e)--(f)--(a)--(e)--(b);
}

\usepackage{booktabs, multirow, dcolumn}
\usepackage{tikz}

\title{$K_4$-intersecting families of graphs}
\author[Berger]{Aaron Berger}
\author[Zhao]{Yufei Zhao}
\address{Department of Mathematics, Massachusetts Institute of Technology, Cambridge, MA 02139, USA}
\email{\{bergera,yufeiz\}@mit.edu}
\date{}
\thanks{Berger was supported by NSF Graduate Research Fellowship DGE-1745302.
Zhao was supported by NSF Award DMS-1764176, the MIT Solomon Buchsbaum Fund, and a Sloan Research Fellowship.}

\renewcommand{\wh}{\widehat}
\renewcommand{\subset}{\subseteq}

\newcommand{\N}{\bb N}

\DeclareMathOperator{\Unif}{Unif}

\begin{document}

\begin{abstract}
    Ellis, Filmus, and Friedgut proved an old conjecture of Simonovits and S\'os showing that the maximum size of a triangle-intersecting family of graphs on $n$ vertices has size at most $2^{\binom{n}{2} - 3}$, with equality for the family of graphs containing some fixed triangle. They conjectured that their results extend to cross-intersecting families, as well to $K_t$-intersecting families. We prove these conjectures for $t \in \{3,4\}$, showing that if $\mc F_1$ and $\mc F_2$ are families of graphs on $n$ labeled vertices such that for any $G_1 \in \mc F_1$ and $G_2 \in \mc F_2$, $G_1 \cap G_2$ contains a $K_t$, then $\lvert \mathcal F_1 \rvert  \lvert \mathcal F_2 \rvert \le 4^{\binom{n}{2} - \binom{t}{2}}$, with equality if and only if $\mc F_1 = \mc F_2$ consists of all graphs that contain some fixed $K_t$. 
    We also establish a stability result. 
    More generally, ``$G_1 \cap G_2$ contains a $K_t$'' can be replaced by ``$G_1$ and $G_2$ agree on a non-$(t-1)$-colorable graph.''
\end{abstract}

\maketitle

\section{Introduction}

We say that a family $\mc F$ of graphs on a common vertex set is \emph{triangle-intersecting} if, for every $G_1, G_2 \in \mc F$, $G_1 \cap G_2$ contains a triangle. 
A classic problem due to Simonovits and S\'os from 1976 asks to determine the largest triangle-intersecting family of graphs on $n$ labeled vertices.
They conjectured that the maximizing family is obtained by taking all graphs containing some fixed triangle, which has size $2^{\binom{n}{2} - 3}$.
Triangle-intersecting families are a graph theoretic analogue of intersecting set families, which have been extensively studied, with the Erd\H{o}s--Ko--Rado theorem \cite{EKR61} being a fundamental result.
On the other hand, intersecting graph families are much less understood, and many standard techniques for studying intersecting set families (e.g., shifting) do not easily adapt to the graph setting.

The first significant progress on the Simonovits--S\'os conjecture was due to Chung, Graham, Frankl, and Shearer \cite{CGFS86}, who introduced a powerful entropy lemma (now commonly known as Shearer's lemma) and used it to prove an upper bound of $2^{\binom{n}{2} - 2}$, though it misses the conjecture by a factor of $2$. A significant breakthrough was obtained by Ellis, Filmus, and Friedgut~\cite{EFF12}, who used Fourier analytic methods to prove the Simonovits--S\'os conjecture along with several strengthenings. 

We say that a family $\mc F$ of graphs on a common vertex set is \emph{$H$-intersecting} if for every $G_1, G_2 \in \mc F$, $G_1 \cap G_2$ contains $H$ as a subgraph. Here is a natural extension of the above problem.

\begin{problem}\label{prob:intersecting}
Given $H$ and $n$, determine the maximum size of an $H$-intersecting family of graphs on $n$ labeled vertices.
\end{problem}

A family of graphs on a common set of vertices is called an \emph{$H$-umvirate} if the family consists of all graphs containing some fixed copy of $H$. 
Clearly an $H$-umvirate family on $n$ vertices is $H$-intersecting and has size $2^{\binom{n}{2} -e(H)}$. 
For a fixed $H$, is an $H$-umvirate family is the largest $H$-intersecting family?
It turns out that the answer can be no (see discussions at the end of this section), but it is conjectured \cite{EFF12} that the answer is yes whenever $H$ is a clique.

\begin{conjecture} \label{conj:clique-intersecting}
Every $K_t$-intersecting family of graphs on $n$ vertices has size at most $2^{\binom{n}{2} - \binom{t}{2}}$.
\end{conjecture}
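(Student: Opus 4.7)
The plan is to adapt the Fourier-analytic Hoffman-style bound of Ellis--Filmus--Friedgut, working throughout with the cross-intersecting version so that the $K_t$-intersecting conjecture (the case $\mc F_1 = \mc F_2$) falls out as a special case. Identify graphs on $[n]$ with elements of the abelian group $V := \F_2^{\binom{[n]}{2}}$, and use the character basis $\chi_H(G) = (-1)^{|E(G) \cap E(H)|}$. The key conceptual step, which motivates the chromatic generalization highlighted in the abstract, is the observation that ``$G_1 \cap G_2$ contains $K_t$'' is implied by the weaker condition ``$K_n \setminus (G_1 \oplus G_2)$ is not $(t{-}1)$-colorable'', and the latter depends only on the symmetric difference. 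Write $\mc B \subset V$ for the set of such ``bad differences,'' equivalently the set of $D \subset \binom{[n]}{2}$ that contain all intra-part edges of some partition of $[n]$ into $t-1$ classes. The generalized intersecting hypothesis thus forces $G_1 \oplus G_2 \notin \mc B$ for every pair $(G_1, G_2) \in \mc F_1 \times \mc F_2$.

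For any probability measure $\mu$ supported on $\mc B$, the symmetric convolution operator $A_\mu$ on $\R^V$ defined by $(A_\mu)_{G_1, G_2} := \mu(G_1 \oplus G_2)$ satisfies $\ind_{\mc F_1}^\top A_\mu \ind_{\mc F_2} = 0$. This operator is diagonalized by the characters, with eigenvalues $M(H) := \E_{D \sim \mu}[\chi_H(D)]$ and trivial eigenvalue $M(\emptyset) = 1$ attached to the all-ones vector. Expanding $\ind_{\mc F_j}$ in the character basis, separating the trivial mode, and applying Cauchy--Schwarz plus Parseval yields
\[
\sqrt{|\mc F_1||\mc F_2|}/2^{\binom{n}{2}} \le \alpha \sqrt{\bigl(1 - |\mc F_1|/2^{\binom{n}{2}}\bigr)\bigl(1 - |\mc F_2|/2^{\binom{n}{2}}\bigr)}
\]
with $\alpha := \max_{H \ne \emptyset} |M(H)|$; the AM-GM inequality $\sqrt{(1-a)(1-b)} \le 1 - \sqrt{ab}$ then rearranges to $\sqrt{|\mc F_1||\mc F_2|} \le 2^{\binom{n}{2}} \cdot \alpha/(1+\alpha)$. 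The target bound $|\mc F_1||\mc F_2| \le 4^{\binom{n}{2} - \binom{t}{2}}$ thus reduces to producing a probability measure $\mu$ on $\mc B$ with $\alpha = 1/(2^{\binom{t}{2}}-1)$, i.e., $|M(H)| \le 1/(2^{\binom{t}{2}}-1)$ for every nonempty $H$.

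Constructing such a measure is the main technical work. A natural family of candidates samples an ordered $(t{-}1)$-partition $\pi$ of $[n]$ from some distribution, includes all intra-part edges of $\pi$, then samples the remaining ``inter-part'' edges i.i.d.\ with some bias; the Fourier coefficient $M(H)$ then admits a clean combinatorial interpretation as an expectation over ``colored'' copies of $H$, in particular vanishing on any $\pi$ that is not a proper $(t{-}1)$-coloring of $H$. For $t = 3$ the required mixture follows from Ellis--Filmus--Friedgut's triangle construction, suitably refined to yield two-sided control of $M(H)$ (not just a lower bound). For $t = 4$ an analogous construction built from uniform $3$-partitions and a carefully-chosen inter-part edge bias is the main new input; the mixture must be tuned so that $|M(H)|$ attains the target value $1/63 = 1/(2^6-1)$ exactly on the \emph{spectral shadow} of the extremal $K_4$-umvirate, namely on characters $\chi_H$ with $H$ a subgraph of a single fixed $K_4$.

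The main obstacle is verifying the uniform bound $|M(H)| \le 1/(2^{\binom{t}{2}}-1)$ over every nonempty $H$: sparse but chromatically-large graphs (odd cycles for $t=3$, and suitable generalizations for $t=4$) are the dangerous candidates, and the verification will require careful case analysis of which partitions $\pi$ contribute to $M(H)$ for each structural type of $H$. Once this spectral bound is in place, uniqueness and stability follow from the standard equality/robustness analysis for the Hoffman bound: tight equality forces both $\ind_{\mc F_j}$ to be supported on the $\alpha$-eigenspace of $A_\mu$, which one identifies with the span of $\chi_H$ for $H$ a subgraph of a single fixed $K_t$---exactly the Fourier support of $K_t$-umvirate indicators---so $\mc F_1 = \mc F_2$ must be this umvirate, and a quantitative version of the same argument yields the stability statement.
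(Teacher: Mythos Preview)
The statement you are attempting is a \emph{conjecture} in the paper, not a theorem: it is proved only for $t\in\{3,4\}$ (Theorem~\ref{thm:main-intersecting}), and your proposal likewise only treats $t\in\{3,4\}$, so neither you nor the paper settles the full conjecture. For those two values your framework is exactly the paper's---the Hoffman/linear-programming bound via a measure supported on $\mc B$, reducing everything to producing $\mu$ with $\max_{H\neq\varnothing}|M(H)|\le 1/(2^{\binom t2}-1)$---so at the level of strategy you are aligned. (One minor slip: you wrote that ``$G_1\cap G_2$ contains $K_t$'' is \emph{implied by} the chromatic condition; the implication goes the other way, which is what you actually need.)

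Where your plan has a genuine gap is in the two hard steps the paper devotes Sections~\ref{sec:fourier} and~\ref{sec:constructions} to. First, the dual witness: your ``sample a $(t{-}1)$-partition, keep all intra-part edges, then toss in inter-part edges i.i.d.\ with some bias'' is a one-parameter family and is not rich enough. The paper instead uses the much larger ansatz $\wh\nu(G)=(-1)^{e(G)}\sum_H c_H\,\P[G_q\cong H]$ with coefficients $c_H$ indexed by small unlabeled graphs (nine coefficients for $t=3$, twenty-one for $t=4$; see Tables~\ref{tab:k3} and~\ref{tab:K4}), and even then the feasible region is delicate. Second, and more seriously, you do not address how to verify $|M(H)|\le 1/(2^{\binom t2}-1)$ for the \emph{infinitely many} nonempty $H$; ``careful case analysis'' is not enough. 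The paper's main technical contribution is precisely a reduction of this infinite family of constraints to a finite computation: a double-counting lemma (Lemma~\ref{lem:double_count}) bounds $|\mu(G)|$ for connected $G$ with $v(G)>n_0$ by a maximum over subgraphs of $K_{n_0}$, and a contraction argument (Lemma~\ref{lem:contraction_bound}) handles disconnected $G$, together yielding Proposition~\ref{prop:reduction_to_bounded}. With $n_0=9$ this leaves a computer check over all graphs on at most nine vertices. Without an analogue of this reduction, your verification step is not a proof even for $t\in\{3,4\}$.
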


Ellis, Filmus, and Friedgut proved the result for $t = 3$. 
Applying tools developed by Friedgut \cite{Fri08},
they also proved uniqueness and stability of the maximizer, showing that a triangle-intersecting family with size nearly the maximum size must be close to a triangle-umvirate.

\begin{theorem}[{\cite[Theorem 1.4]{EFF12}}]\label{thm:EFF} Let $\mc F$ be a triangle-intersecting family of graphs on $n$ labeled vertices. 
	\begin{itemize}
	    \item (\textbf{Upper bound}) $\abs{\mc F} \le 2^{\binom{n}{2}-3}$
	    \item (\textbf{Uniqueness}) The upper bound is an equality if and only if $\mc F$ is a triangle-umvirate.
	    \item (\textbf{Stability}) 
	    There exists an absolute constant $C> 0$ so that for all $\epsilon > 0$, 
	    if $\abs{\mc F} \ge (1-\epsilon) 2^{\binom{n}{2}-3}$, then there exists a triangle-umvirate $\mc T$ such that $\abs{\mc T \triangle \mc F} \le C \epsilon 2^{\binom{n}{2}}$. (Here $\triangle$ denotes the set symmetric difference.)
	\end{itemize}
\end{theorem}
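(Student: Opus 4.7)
The plan is to use Fourier analysis on the Boolean cube $\{0,1\}^{\binom{n}{2}}$ (identifying graphs with their edge-indicator vectors) combined with a Hoffman-style spectral bound. I would look for a nonnegative symmetric weighting $M$ on pairs of graphs such that $M(G_1,G_2)=0$ whenever $G_1\cap G_2$ contains a triangle, and such that $M$ is diagonalized by the Fourier characters $\chi_S(G)=(-1)^{|E(G)\cap S|}$ so that its spectrum can be computed explicitly. If $M$ has constant row-sum $d$ and minimum eigenvalue $\lambda_{\min}$, then the Hoffman bound yields $|\mathcal{F}|/2^{\binom{n}{2}} \le -\lambda_{\min}/(d-\lambda_{\min})$ for any triangle-intersecting $\mathcal{F}$, and the target is to choose $M$ so that this ratio is exactly $1/8$.

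The central technical step is the design of $M$. The condition ``$G_1\cap G_2$ triangle-free'' is not a function of $G_1\oplus G_2$ alone, so a plain Cayley-graph adjacency will not work. Instead, I would build $M$ by averaging, over triangles $T\subseteq K_n$, of kernels supported on pairs $(G_1,G_2)$ that disagree suitably on the three edges of $T$ (so that neither $G_1$ nor $G_2$ contains $T$ entirely, hence $G_1\cap G_2$ does not contain $T$); by inclusion--exclusion over triangles, one can arrange for $M$ to vanish on all triangle-intersecting pairs. Such an $M$ retains a tensor-like Fourier structure in the edges, and by the natural $S_n$-symmetry of the construction, the Fourier eigenvalues depend only on the isomorphism class of the indexing subgraph $S$. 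Careful tuning of the triangle weights should make the most negative eigenvalue be attained on the characters $\chi_T$ indexed by triangles $T$, yielding $\lambda_{\min}=-d/7$ and hence the desired ratio $1/8$.

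For uniqueness, equality in the Hoffman bound forces $\mathbf{1}_{\mathcal{F}}$ minus its mean to lie in the $\lambda_{\min}$-eigenspace, which is spanned by triangle-characters $\chi_T$; a small case analysis then shows that the only $\{0,1\}$-valued functions in the span of $\{1\}\cup\{\chi_T\}$ achieving the extremal density are the indicators of triangle-umvirates. For stability, near-equality implies that almost all of the Fourier mass of $\mathbf{1}_{\mathcal{F}}$ is concentrated on characters of constant degree; then by Friedgut's junta theorem~\cite{Fri08}, $\mathcal{F}$ is close to a family depending on only $O(1)$ edges, and a structural argument pinpoints that the only juntas of density close to $2^{-3}$ are close to triangle-umvirates. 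The principal obstacle is the design of $M$: one must simultaneously enforce (a) vanishing on all triangle-intersecting pairs, (b) Fourier-tractability of the spectrum, and (c) tightness of the Hoffman ratio at exactly $1/8$. Any naive weighting (e.g., uniform over non-intersecting pairs) seems far too loose; the art is to place mass precisely on triangle-disagreement patterns, depressing $\lambda_{\min}$ all the way to $-d/7$ without sacrificing nonnegativity or the support restriction.
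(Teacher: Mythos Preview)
Your high-level plan (Hoffman/LP bound via Fourier analysis, then Friedgut's junta machinery for uniqueness and stability) matches the paper's framework, but the central step---the design of $M$---has a genuine gap.

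You correctly observe that ``$G_1\cap G_2$ contains a triangle'' is not a function of $G_1\oplus G_2$, so a Cayley weighting cannot encode it directly. Your proposed fix is a non-Cayley kernel built from triangle-disagreement patterns plus ``inclusion--exclusion over triangles.'' But a matrix that is not of the form $M(G_1,G_2)=\nu(G_1\oplus G_2)$ is \emph{not} diagonalized by the Fourier characters $\chi_S$, so you lose the very structure that makes the eigenvalue computation tractable. The inclusion--exclusion sketch is also not coherent: supporting a kernel on pairs that disagree on the edges of a fixed $T$ only rules out $T\subseteq G_1\cap G_2$ for that one $T$, and no linear combination over triangles turns ``some $T$ is missing'' into ``every $T$ is missing'' while remaining Fourier-diagonal.

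The actual resolution in \cite{EFF12} (and in this paper) is to \emph{relax} the hypothesis: a triangle-intersecting family is in particular \emph{odd-cycle-agreeing}, meaning that for all $G_1,G_2\in\mc F$ the complement $\overline{G_1\triangle G_2}$ is not bipartite. This condition \emph{is} purely a function of $G_1\oplus G_2$, so one may take a genuine Cayley weight $\nu$ supported on graphs with bipartite complement; the characters $\chi_S$ then automatically diagonalize the operator, and the problem reduces to exhibiting a $\nu$ with $\widehat\nu(\varnothing)=1$ and $\max_{S\neq\varnothing}|\widehat\nu(S)|\le 1/7$. The construction of such a $\nu$ (via random cuts, i.e.\ averaging over complete bipartite graphs) is the real technical content; your proposal does not reach this point. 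For uniqueness, note also that the minimum eigenvalue is attained on \emph{all} $S$ with at most three edges, not only on triangles, so one needs Friedgut's lemma on monotone Boolean functions with low-degree Fourier support (after a shifting argument to reduce to the monotone case), rather than a direct span-of-$\chi_T$ argument.
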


Results about intersecting families of sets are often extended to cross-intersecting families, and \cite{EFF12} conjectures that the same extensions hold here as well.

\begin{definition}[Cross intersecting families]
A pair $(\mc F_1, \mc F_2)$ of families of graphs on a common vertex set is said to be \emph{cross-$H$-intersecting} if for any $G_1 \in \mc F_1, G_2 \in \mc F_2$ one has that $G_1 \cap G_2$ contains a subgraph isomorphic to $H$. 
\end{definition}

Conjecture 1 in \cite{EFF12} says that if $(\mc F_1, \mc F_2)$ is a cross-triangle-intersecting pair of family of graphs on $n$ labeled vertices, then $\abs{\mc F_1} \abs{\mc F_2} \le 4^{\binom{n}{2} - 3}$, with equality if and only if $\mc F_1 = \mc F_2$ is a triangle-umvirate.
We prove this conjecture along with its $K_4$-intersecting analogue.
Our solution applies the framework set up in \cite{EFF12}, and where they need to verify certain inequalities by hand via casework, we come up with systematic way to potentially verify the conjecture for any fixed $t$ via a finite computation, though the complexity of the computation increases extremely quickly with $t$.

\begin{theorem}\label{thm:main-intersecting} 
Let $t \in \{3,4\}$. Let $(\mc F_1, \mc F_2)$ be a cross-$K_t$-intersecting pair of families graphs on $n$ labeled vertices.
	\begin{itemize}
	    \item (\textbf{Upper bound})
	    $\abs{\mc F_1}\abs{\mc F_2} \le 4^{\binom{n}{2} - \binom{t}{2}}$
	    \item (\textbf{Uniqueness}) The upper bound is an equality if and only if $\mc F_1 = \mc F_2$ is a $K_t$-umvirate.
	    \item (\textbf{Stability}) There exists a constant $C_t> 0$ depending only on $t$ so that for all $\epsilon > 0$, if $\abs{\mc F_1}\abs{\mc F_2} \ge (1 - \epsilon) 4^{\binom{n}{2} - \binom{t}{2}}$,
	    then there exists a $K_t$-umvirate $\mc T$ such that $\abs{\mc F_1 \triangle \mc T}, \abs{\mc F_2 \triangle \mc T} \le C_t \epsilon 2^{\binom{n}{2}}$.
	\end{itemize}
\end{theorem}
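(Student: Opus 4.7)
The plan is to adapt the spectral/Fourier framework of \cite{EFF12} from triangle-intersecting families to cross-$K_t$-intersecting pairs for $t \in \{3,4\}$, then deduce uniqueness and stability via equality analysis plus a Friedgut-style junta argument.

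Identifying each graph on $[n]$ with its indicator in $\{0,1\}^{\binom{n}{2}}$, let $f_i = \mathbf{1}_{\mc F_i}$ and $\mu_i = |\mc F_i|/2^{\binom{n}{2}}$; the goal is $\mu_1 \mu_2 \le 4^{-\binom{t}{2}}$. The central step is to construct a symmetric $S_n$-invariant convolution kernel $K(G_1,G_2) = \kappa(G_1 \oplus G_2)$ on $\mathbb{F}_2^{\binom{n}{2}}$ supported on differences $D$ whose complement $D^c$ is $(t-1)$-colorable, so that $K$ vanishes on every cross-intersecting pair (using the equivalent reformulation that $G_1, G_2$ agree on a non-$(t-1)$-colorable subgraph iff $(G_1 \oplus G_2)^c$ is non-$(t-1)$-colorable). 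Being a convolution kernel, $K$ is diagonal in the Fourier basis, and the hypothesis gives
\[
0 \;=\; \sum_{G_1,G_2} K(G_1,G_2)\,f_1(G_1)\,f_2(G_2) \;=\; 4^{\binom{n}{2}} \sum_{S \subseteq E(K_n)} \hat\kappa(S)\,\hat f_1(S)\,\hat f_2(S).
\]
The task is to choose the support weights of $\kappa$ so that its Fourier profile $\hat\kappa$ realizes a Hoffman-type bound: isolating the $S = \emptyset$ term and controlling the rest translates into $\mu_1 \mu_2 \le 4^{-\binom{t}{2}}$, with equality only if $\hat f_1, \hat f_2$ are supported on a specific extremal eigenspace.

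I would take $\kappa$ to be a nonnegative combination of ``agreement indicators'' $\mathbf{1}[H \cap D = \emptyset]$ over copies $H$ of a carefully chosen $S_n$-orbit-symmetric list of $(t-1)$-colorable graphs in $K_n$. Each such indicator has the explicit Fourier transform $2^{-|H|}\mathbf{1}[S \subseteq H]$, so choosing the weights reduces to a symmetric finite-dimensional linear program: the weights on each $S_n$-orbit of witnesses $H$ are tuned to cancel non-extremal Fourier coefficients against the extremal ones. For $t=3$ the witnesses form essentially one orbit (bipartite graphs of a specific shape) and the LP admits a closed-form solution, recovering EFF. For $t=4$ several $3$-colorable witness orbits are needed, each balanced against a corresponding orbit of characters, and it is the feasibility and tightness of this finite LP that is the paper's main technical content; the abstract's promise of a ``systematic way to verify the conjecture for any fixed $t$ via a finite computation'' refers exactly to this reduction.

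Uniqueness follows from tightness of the Hoffman extraction: equality forces $\hat f_1, \hat f_2$ to be concentrated on characters lying in the extremal eigenspace of $\kappa$, which by construction consists of characters indexed by $S \subseteq E(K_T)$ for a fixed $t$-set $T$; the only Boolean function with Fourier support in this space is the $K_t$-umvirate on $T$, and the cross-intersecting requirement forces the same $T$ for both $\mc F_1$ and $\mc F_2$. For stability, I would invoke a Kindler--Safra / Friedgut junta theorem: a near-optimal pair of families yields $L^2$-concentration of $\hat f_i$ on the extremal characters, which pushes each $f_i$ to be $O_t(\epsilon)$-close to a $\binom{t}{2}$-junta; a short combinatorial step then identifies this junta with a $K_t$-umvirate. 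The main obstacle throughout is the $t=4$ LP feasibility, where orbit counts balloon and constraints couple nontrivially across scales --- this appears to be exactly where the method stalls, with $t \ge 5$ presumably requiring a substantially larger certified computation or a genuinely new structural input.
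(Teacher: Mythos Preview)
Your high-level plan---Hoffman/LP bound via a convolution kernel supported on graphs with $(t-1)$-colorable complement, reduction to a finite dual certificate, then Kindler--Safra for stability---is indeed the paper's framework. But two of your concrete steps do not work as written.

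\textbf{The building blocks do not have the right support.} The indicator $\mathbf{1}[H\cap D=\emptyset]$ for a $(t-1)$-colorable $H$ is nonzero exactly when $D\subseteq\overline H$, i.e.\ $\overline D\supseteq H$; but a supergraph of a $(t-1)$-colorable graph need not be $(t-1)$-colorable (take $H$ a single edge and $D=\emptyset$, so $\overline D=K_n$). Hence $\kappa$ built from such indicators is \emph{not} supported on $\{D:\overline D\text{ is }(t-1)\text{-colorable}\}$, and the crucial identity $\langle f_1*f_2,\kappa\rangle=0$ fails. The paper instead uses (signed) linear combinations of point masses $\mathbf{1}[D=\overline T]$ for $(t-1)$-colorable $T$, packaged as $\widehat\nu(\lambda)=(-1)^{e(\lambda)}\E_{T}[f_T(T\cap\lambda)]$ with $T$ a uniformly random complete $(t-1)$-partite graph; the dual variables are \emph{signed} coefficients $c_H$ on the isomorphism type of $T\cap\lambda$, and even for $t=3$ nine graph types are needed, not ``essentially one orbit''. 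The reduction to a finite computation is also not automatic: one needs a separate double-counting argument to bound $|\widehat\nu(\lambda)|$ uniformly once $\lambda$ has more than a fixed number of vertices.

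\textbf{The uniqueness step as stated cannot hold.} An $S_n$-invariant kernel has $\widehat\kappa$ constant on $S_n$-orbits, so its extremal eigenspace cannot be ``characters $S\subseteq E(K_T)$ for a \emph{fixed} $T$''---if one such $S$ is extremal, so is every $S_n$-translate. What equality actually gives is only that $\widehat f_i$ is supported on $\lambda$ with at most $\binom{t}{2}$ edges, and there are many Boolean functions with this property besides $K_t$-umvirates. The paper first compresses to reduce to monotone $\mc F_i$, then applies Friedgut's lemma (which needs both monotonicity and $\E[f_i]=2^{-\binom{t}{2}}$) to get that each $f_i$ is a $\binom{t}{2}$-umvirate on some edge set $T_i$, and finally a short combinatorial argument forces $T_1=T_2\cong K_t$. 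The further step from ``$K_t$-conjunction'' (the natural output for the agreeing problem) to ``$K_t$-umvirate'' is handled separately.
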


\begin{conjecture}
Theorem \ref{thm:main-intersecting} holds for all $t \ge 5$.
\end{conjecture}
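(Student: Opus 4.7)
The plan is to extend the Fourier--spectral framework of \cite{EFF12}, refined in this paper for $t \in \{3,4\}$, to general $t \ge 5$. The architecture of the argument should not change: construct a symmetric kernel $f(G_1, G_2)$ on the set of labeled graphs on $n$ vertices that vanishes whenever $G_1 \cap G_2$ contains a $K_t$, and whose Fourier spectrum satisfies $\abs{\lambda_{\min}} / (\abs{\lambda_{\min}} + \lambda_{\max}) = 2^{-\binom{t}{2}}$; the bilinear Hoffman bound used in the present paper then yields $\abs{\mc F_1}\abs{\mc F_2} \le 4^{\binom{n}{2} - \binom{t}{2}}$. Uniqueness and stability should follow by the same Friedgut-style junta/hypercontractivity machinery used in the $t \in \{3,4\}$ cases, transferring essentially verbatim once the spectral bound is tight.

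The construction of $f$ reduces, as in the $t = 3, 4$ arguments, to a finite semidefinite feasibility problem: write $f = \sum_H \alpha_H \, \1[H \subset G_1 \cap G_2]$ over isomorphism types of $K_t$-containing graphs $H$ on at most $N = N(t)$ vertices with nonnegative coefficients $\alpha_H$, and demand that a certain matrix $M_t$ of dimension roughly $2^{\binom{t}{2}}$ be positive semidefinite with a one-dimensional null space in a prescribed direction. The concrete first step is to write down $M_t$ and its symmetries under the $S_t$-action on $V(K_t)$, reduce it modulo those symmetries, and attempt verification by computer for $t = 5$ and, optimistically, $t = 6$. As emphasized by the authors, the framework itself already gives the reduction; only the feasibility check remains.

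The main obstacle is that the size of this verification grows very rapidly in $t$: already at $t = 5$ the ambient matrix has dimension $2^{10} = 1024$ before symmetry reduction, and the number of candidate support types $H$ grows at least as quickly. Handling all $t \ge 5$ uniformly almost certainly requires a new idea beyond brute-force verification. Plausible directions include: a closed-form description of the optimal coefficients $\alpha_H$, guessed from the small cases and then verified by induction on $t$; a tensorization argument exploiting the fact that a $K_t$-umvirate is, in a suitable sense, a product of $K_2$-umvirates along a clique, so that the spectral witness for $t$ can be built from a known witness for $t - 1$; or a sharp Shearer-type entropy inequality capturing the $K_t$-intersecting constraint and bypassing the eigenvalue analysis altogether. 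Identifying which of these --- if any --- produces the sharp eigenvalue inequality in a $t$-uniform way is, in my view, the crux of the conjecture, and without such an insight I expect only finitely many $t$ to be tractable by the present methods.
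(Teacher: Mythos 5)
The statement you are addressing is an open conjecture in the paper: the authors give no proof for $t \ge 5$, and their Section~\ref{sec:discussion} explicitly says that extending the method beyond $t=3,4$ requires either vastly more computation (for each fixed $t$) or a genuinely new idea (for all $t$). Your proposal does not close this gap: it restates the reduction that the paper already supplies (Hoffman-type bound of Proposition~\ref{prop:Hoffman}, dual witness supported via Lemmas~\ref{lem:nu_support_condition} and~\ref{lem:nu_construction_condition}, uniqueness/stability via Lemma~\ref{lem:Friedgut} and Theorem~\ref{thm:K-S}) and then lists candidate strategies (closed-form coefficients, tensorization from $t-1$, entropy methods) without carrying any of them out. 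The entire content of the conjecture is the existence, for each $t \ge 5$, of a feasible dual solution in the sense of Proposition~\ref{prop:construction} --- a $\mu$ with $\mu(0) = 2^{\binom{t}{2}}-1$, $\abs{\mu(G)} \le 1$ for $G \neq \varnothing$, and the strict gap $\abs{\mu(G)} \le 1-\delta$ beyond $\binom{t}{2}$ edges (the last being what makes uniqueness and stability ``transfer verbatim''). You neither construct such a $\mu$ for any $t \ge 5$, nor verify the resulting finite check for $t=5$, nor supply the uniform-in-$t$ argument; indeed your last paragraph concedes exactly this. So the proposal is a research plan coinciding with the paper's own outlook, not a proof.

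Two smaller inaccuracies in how you describe the framework are worth correcting if you pursue this. First, the paper's reduction is a linear program, not a semidefinite one, and the coefficients $c_H$ are emphatically not constrained to be nonnegative (most entries of Tables~\ref{tab:k3} and~\ref{tab:K4} are negative); the witness is $\wh\nu(\lambda) = (-1)^{e(\lambda)} \E_{T} f_T(T \cap \lambda)$ for a distribution on $(t-1)$-colorable graphs, and what must vanish is $\nu$ off the set of graphs whose complements are $(t-1)$-colorable --- i.e., the kernel lives on the symmetric difference $G_1 \triangle G_2$ in the agreeing formulation of Theorem~\ref{thm:main-agreeing}, from which the intersecting Theorem~\ref{thm:main-intersecting} is then deduced in Section~\ref{sec:additional-proofs}; working directly with ``$G_1 \cap G_2 \supseteq K_t$'' loses that extra strength and the route to the stated equality/stability statements. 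Second, the finite verification for fixed $t$ is not a $2^{\binom{t}{2}}$-dimensional matrix problem: it is the check that $\abs{\mu(G)} \le 1$ over all graphs $G$, reduced via Proposition~\ref{prop:reduction_to_bounded} to graphs on at most $n_0$ vertices, and the cost is driven by enumerating $q$-colorings of all graphs on $n_0$ vertices (already $n_0 = 9$, $q = 3$ for $t = 4$), which is why the authors describe the growth in $t$ as prohibitive. Any serious attack on $t \ge 5$ must either exhibit the coefficients and push such a computation through, or find the structural insight you correctly identify as missing.
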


\cref{thm:EFF} was proved in \cite{EFF12} in a more general context of odd-cycle-agreeing families. 
Our results also hold in this generality.
We write $G_1 \triangle G_2$ for the edge symmetric difference of two graphs, and $\ol G$ for the edge-complement of a graph.

\begin{definition}
A pair $(\mc F_1, \mc F_2)$ of families of graphs on a common vertex is said to be \textit{cross-$t$-chromatic-intersecting} if for any $G_1 \in \mc F_1$, $G_2 \in \mc F_2$, $G_1 \cap G_2$ is not $(t-1)$-colorable, and is said to be \emph{cross-$t$-chromatic-agreeing} if for any $G_1 \in \mc F_1$, $G_2 \in \mc F_2$, $\ol{G_1 \triangle G_2}$ is not $(t-1)$-colorable (equivalently, there is some $t$-chromatic graph on which $G_1$ and $G_2$ agree).
\end{definition}

A cross-$K_t$-intersecting pair is clearly cross-$t$-chromatic-agreeing. As stated below, \cref{thm:main-intersecting} holds also for this stronger notion, and we conjecture that it holds for all $t$.
Here the equality case needs to be changed from being a $K_t$-umvirate to an \emph{$K_t$-conjunction}, which is defined to be a family of all graphs with a prescribed intersection with some given copy of $K_t$. 
In other words, $\mc F$ is a $K_t$-conjunction if and only if there is some graph $H$ on the same common vertex set such that $\{G \triangle H : G \in \mc F\}$ is a $K_t$-umvirate.\footnote{\cite{EFF12} uses ``$K_t$-junta'' to mean what we are calling $K_t$-conjunction here, which differs from the common usage of ``junta'' to mean any function determined by its restriction on a fixed set.}

\begin{theorem} \label{thm:main-agreeing}
Let $t \in \{3,4\}$. Let $(\mc F_1, \mc F_2)$ be a cross-$t$-chromatic-agreeing pair of families graphs on $n$ labeled vertices.
	\begin{itemize}
	    \item (\textbf{Upper bound})
	    $\abs{\mc F_1}\abs{\mc F_2} \le 4^{\binom{n}{2} - \binom{t}{2}}$
	    \item (\textbf{Uniqueness}) The upper bound is an equality if and only if $\mc F_1 = \mc F_2$ is a $K_t$-conjunction.
	    \item (\textbf{Stability}) There exists a constant $C_t> 0$ depending only on $t$ so that for all $\epsilon > 0$, if $\abs{\mc F_1}\abs{\mc F_2} \ge (1 - \epsilon) 4^{\binom{n}{2} - \binom{t}{2}}$,
	    then there exists a $K_t$-conjunction $\mc T$ such that $\abs{\mc F_1 \triangle \mc T}, \abs{\mc F_2 \triangle \mc T} \le C_t \epsilon 2^{\binom{n}{2}}$.
	\end{itemize}
\end{theorem}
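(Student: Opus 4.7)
I follow the Fourier-analytic framework introduced by~\cite{EFF12}. Identifying graphs on $[n]$ with elements of $\F_2^m$ for $m = \binom{n}{2}$, so that $G_1 \triangle G_2$ is group addition, set
\[
C_t = \setcond{H \in \F_2^m \setminus \set{0}}{\ol H \text{ is } (t-1)\text{-colorable}}.
\]
The cross-$t$-chromatic-agreeing hypothesis then says exactly that $(\mc F_1, \mc F_2)$ is a cross-independent pair in the Cayley graph on $\F_2^m$ with connection set $C_t$. The plan is to exhibit an $S_n$-invariant probability measure $\mu$ on $C_t$ and apply a Hoffman-type spectral bound to the self-adjoint convolution operator $M_\mu f(G) = \sum_H \mu(H) f(G \triangle H)$, whose eigenvalues on the character basis of $\F_2^m$ are the Fourier coefficients $\hat\mu(A) = \E_{H \sim \mu}(-1)^{\abs{A \cap H}}$ indexed by $A \subset \binom{[n]}{2}$.

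Write $\alpha = 2^{-\binom{t}{2}}$ and $p_i = \abs{\mc F_i}/2^m$. The cross-agreeing condition yields $\langle \ind_{\mc F_1}, M_\mu \ind_{\mc F_2}\rangle = 0$, and expanding in the character basis together with Parseval and Cauchy--Schwarz gives
\[
p_1 p_2 \le \paren{\max_{A \ne \emptyset} \abs{\hat\mu(A)}}^{2} (1-p_1)(1-p_2).
\]
If one can arrange $\max_{A \ne \emptyset}\abs{\hat\mu(A)} \le \alpha/(1-\alpha)$, then combining with $p_1 + p_2 \ge 2\sqrt{p_1 p_2}$ collapses the above into $p_1 p_2 \le \alpha^2$, i.e.\ the desired $\abs{\mc F_1}\abs{\mc F_2} \le 4^{m - \binom{t}{2}}$. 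Thus the crux reduces to constructing $\mu$ with this spectral gap. For $t = 3$, a suitable measure was constructed in~\cite{EFF12} by taking (up to reweighting) the uniform measure on $H$ whose complement is a complete bipartite graph; this yields the sharp bound $\abs{\hat\mu(A)} \le 1/7$, and because the Hoffman bound passes verbatim to cross-independent sets it immediately proves the $t=3$ case. For $t = 4$, I plan to design a new $\mu$ supported on complements of $3$-colorable graphs and verify $\abs{\hat\mu(A)} \le 1/63$ for every $A$; by $S_n$-invariance the verification reduces to a finite list of inequalities indexed by isomorphism types of $A$. This large but finite check is the main technical contribution, and is the principal obstacle I expect to encounter.

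For uniqueness, equality at the end of the chain of inequalities forces the mean-zero parts $g_i = \ind_{\mc F_i} - p_i$ to lie (in the Fourier basis) in the eigenspace of $M_\mu$ at the extremal eigenvalue $-\alpha/(1-\alpha)$. By describing this eigenspace explicitly --- it should be spanned by characters $\chi_A$ as $A$ ranges over edge sets of $K_t$ subgraphs --- and invoking the $\set{0,1}$-valuedness of $\ind_{\mc F_i}$, I expect to conclude that $\mc F_1 = \mc F_2$ must be a $K_t$-conjunction, as conjectured. For stability, when $\abs{\mc F_1}\abs{\mc F_2} \ge (1-\epsilon) 4^{m-\binom{t}{2}}$ the same spectral argument shows that each $g_i$ has at least a $(1 - O(\epsilon))$ fraction of its $\ell^2$-mass on the extremal eigenspace; Friedgut's junta theorem~\cite{Fri08} then yields a Boolean approximation to $\ind_{\mc F_i}$ supported on a bounded-size junta, which in turn must be $O(\epsilon)\, 2^m$-close in symmetric difference to a $K_t$-conjunction. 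The main difficulty throughout remains the eigenvalue verification for $t=4$: the ``systematic finite computation'' alluded to in the introduction must be carried out in full, and its combinatorial complexity grows rapidly with $t$, explaining why the same plan is not claimed to extend to $t \ge 5$.
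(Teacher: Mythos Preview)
Your overall plan matches the paper's framework closely: the Hoffman/Cayley-graph spectral bound (Proposition~2.1 here), the construction of a dual measure supported on $C_t$, and the uniqueness/stability via junta-type results. However, there is one genuine gap and one significant divergence in the uniqueness step.

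\textbf{The finite verification is not finite.} You write that ``by $S_n$-invariance the verification reduces to a finite list of inequalities indexed by isomorphism types of $A$.'' But the theorem is for all $n$, and there are infinitely many isomorphism types of graphs as $n$ grows, so this is not a finite check. The paper spends all of Section~3 on precisely this point: a double-counting argument (Lemma~3.2 and Proposition~3.5) shows that for any graph $G$ on more than $n_0$ vertices, $\sum_H |c_H|\,\P[G_q\cong H]$ is bounded by a maximum over induced subgraphs on $n_0$ vertices, with an explicit decay factor $DC_{q,n_0}(v(H))$. Only then does the verification become a genuinely finite computer check over graphs on $\le 9$ vertices. Without this reduction, your plan does not close.

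\textbf{Uniqueness.} You anticipate that the extremal eigenspace is spanned by characters $\chi_A$ with $A$ an edge set of a $K_t$. The paper does not (and likely cannot, with the constructed $\mu$) achieve this: its $\mu$ only satisfies $|\mu(G)|\le 1-\delta$ once $e(G)>\binom{t}{2}$, so the tight eigenspace may contain many $A$ with at most $\binom{t}{2}$ edges that are not $K_t$'s. Instead the paper first applies compressions (Lemma~2.6, Corollary~2.7) to reduce to monotone families, and then uses Friedgut's lemma (\cite[Lemma 2.8]{Fri08}) that a monotone Boolean function of degree $\le k$ and mean $2^{-k}$ is a $k$-umvirate; a short combinatorial argument then forces the umvirate to sit on a $K_t$. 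Your ``$\{0,1\}$-valuedness plus eigenspace description'' sketch does not by itself supply monotonicity, which is essential here.

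\textbf{Stability.} The paper invokes Kindler--Safra (\cite[Theorem~3]{KS02}) rather than Friedgut's junta theorem from \cite{Fri08}; this is a minor point, but the input you need is that small Fourier tail above level $\binom{t}{2}$ implies closeness to a bounded junta, and that is exactly what Kindler--Safra gives.
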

The majority of the paper (Sections \ref{sec:fourier} and \ref{sec:constructions}) is devoted to proving \cref{thm:main-agreeing}. In \cref{sec:additional-proofs} we show how one can deduce \cref{thm:main-intersecting} from \cref{thm:main-agreeing}, along with a $p$-biased version which we will introduce briefly now.

For a family $\mc F$ of graphs on $n$ labeled vertices, let $\mu_p(\mc F) = \sum_{G \in \mc F} p^{e(G)}(1-p)^{e(\overline G)}$.
Ellis, Filmus, and Friedgut \cite{EFF12} showed that \cref{thm:EFF} also holds for $p \le 1/2$ when $|\mc F|$ is replaced by $\mu_p(\mc F)$ with an upper bound of $\mu_p(\mc F) \le p^3$, and the conclusion of (stability) modified accordingly.
It was conjectured that these results extend to $1/2 < p \le 3/4$, but this was disproved by Keller and Lifschitz~\cite{KL19}. Whereas the $p$-biased proof was somewhat involved in \cite{EFF12}, recent work of Ellis, Keller, and Lifshitz \cite{EKL19} enables one to take any of a wide variety of $p = 1/2$ intersecting family results as a black box and deduce $p < 1/2$ versions, including stability. This enables us to obtain the following corollary of \cref{thm:main-agreeing} (see~\cref{sec:additional-proofs}).
\begin{corollary}\label{cor:smaller_p} 
Let $0 < p \le 1/2$ and let $t \in \{3,4\}$. Let $(\mc F_1, \mc F_2)$ be a cross-$t$-chromatic-intersecting pair of families graphs on $n$ labeled vertices.
	\begin{itemize}
	    \item (\textbf{Upper bound})
	    $\mu_p(\mc F_1)\mu_p(\mc F_2) \le p^{-2\binom{t}{2}}$
	    \item (\textbf{Uniqueness}) The upper bound is an equality if and only if $\mc F_1 = \mc F_2$ is a $K_t$-umvirate.
	    \item (\textbf{Stability}) There exists a constant $C_{p,t}> 0$ depending only on $p$ and $t$ so that for all $\epsilon > 0$, if $\abs{\mc F_1}\abs{\mc F_2} \ge (1 - \epsilon) p^{-2\binom{t}{2}}$,
	    then there exists a $K_t$-umvirate $\mc T$ such that $\mu_p(\mc F_i \sm \mc T) \le C_{p,t}\epsilon^{\log_{(1-p)}(p)}$ for $i \in \{1,2\}$.
	\end{itemize}
\end{corollary}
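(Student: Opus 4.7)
The plan is to deduce \cref{cor:smaller_p} from \cref{thm:main-agreeing} in two stages: first establish the $p = 1/2$ case by direct reduction, and then apply the black-box transference of Ellis--Keller--Lifshitz \cite{EKL19} to extend to all $p \le 1/2$.

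For the $p = 1/2$ case, I would first observe that any cross-$t$-chromatic-intersecting pair is automatically cross-$t$-chromatic-agreeing: as edge sets $G_1 \cap G_2 \subseteq \overline{G_1 \triangle G_2}$, and a subgraph of a $(t-1)$-colorable graph is itself $(t-1)$-colorable. Hence \cref{thm:main-agreeing} applied at $p = 1/2$ directly yields the upper bound $\mu_{1/2}(\mc F_1)\mu_{1/2}(\mc F_2) \le 4^{-\binom{t}{2}}$ and its stability counterpart (with approximation by a $K_t$-conjunction). To upgrade ``$K_t$-conjunction'' to ``$K_t$-umvirate'' under the stronger intersecting hypothesis, I would use the fact that every proper subgraph of $K_t$ is $(t-1)$-colorable: if a $K_t$-conjunction $\mc F$ prescribes intersection $F_0 \subsetneq E(K_t(S))$ on its fixed $t$-clique $S$, then $\mc F$ contains a graph $G_0$ whose only edges are exactly $F_0$, and then $G_0 \cap G_0 = F_0$ is $(t-1)$-colorable, contradicting the intersecting hypothesis. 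Hence the extremal and near-extremal families must be $K_t$-umvirates.

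For $p < 1/2$, I would invoke \cite{EKL19}, whose main theorems take a $p = 1/2$ intersecting-family upper bound (with stability) and promote it to the analogous $p$-biased statement at any $p \in (0, 1/2]$, with exactly the stability exponent $\log_{1-p}(p)$ appearing in the corollary. The chromatic intersection condition fits into the ``global'' or junta-stable framework required by \cite{EKL19}, since it depends only on the intersection as a graph. Applying their transference (in its cross-intersecting incarnation) to the $p = 1/2$ statement above therefore yields \cref{cor:smaller_p} in its entirety.

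The main obstacle is verifying that the EKL framework applies cleanly to this cross-intersecting, chromatic variant and that its output is precisely a $K_t$-umvirate approximating both families simultaneously. Concretely, one must check that the transference produces a \emph{single} common umvirate $\mc T$ close to both $\mc F_1$ and $\mc F_2$ (rather than two different ones) and that no spurious $p$-biased extremizers survive. These are essentially formal matters requiring a careful reading of \cite{EKL19}, but are the only non-black-box part of the argument.
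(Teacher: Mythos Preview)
Your outline matches the paper's approach: first reduce to $p=1/2$ via \cref{thm:main-agreeing}, upgrade the conjunction to an umvirate using the intersecting hypothesis, then push down to smaller $p$ using \cite{EKL19}. Your uniqueness upgrade (a $K_t$-conjunction that is not an umvirate contains a graph whose edge set is a proper subgraph of $K_t$, hence $(t-1)$-colorable) is exactly what the paper does implicitly.

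Two points where the paper is more concrete than your sketch. First, for the upper bound and uniqueness at $p<1/2$ the paper does \emph{not} invoke \cite{EKL19}; it simply passes to upward closures and uses the elementary monotonicity of $p\mapsto \log_p \mu_p(\mc F)$ for increasing $\mc F$, so these two parts follow from the $p=1/2$ case by one line. Second, and this is precisely the ``main obstacle'' you flag, the paper does not rely on a cross-intersecting incarnation of \cite{EKL19}. Instead it applies the single-family version of their theorem to the product $\mc F_1\times\mc F_2\subset 2^{\binom{[n]}{2}}\times 2^{\binom{[n]}{2}}$, which has $\mu_{1/2}\le 2^{-2\binom{t}{2}}$ by \cref{thm:main-intersecting}; this yields a single $2\binom{t}{2}$-umvirate $\mc T$ in the product space. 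A separate quantitative lemma then shows that if $\mc T$ is not the Cartesian square of a $K_t$-umvirate, a cross-$t$-chromatic-intersecting pair can occupy only a $(1-\epsilon_{p,t})$-fraction of $\mc T$ (by finding inside $\mc T$ a sub-conjunction on which the two families are forced to be merely cross-intersecting outside $E_1\cup E_2$, and applying the trivial cross-intersecting bound). This is the missing piece that converts your correct plan into a proof.
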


Let us mention some additional related results.
As noted at the end of of \cite{EFF12}, the natural generalization of \cref{conj:clique-intersecting} is false a general $H$, as an $H$-intersecting family could be a constant factor larger than an $H$-umvirate.
Indeed, Noga Alon observed that for every fixed star forest $H$, the largest $H$-intersecting family on $n$ vertices has size $(1- o(1)) 2^{\binom{n}{2} -1}$ (divide the $n$ vertices into two equal halves $A \cup B$ and take all graphs with at least $n/4+C$ vertices in $A$ having degree $\ge n/4 + C$ to $B$; this family is $H$-intersecting for a large enough $C$, and it has size $(1- o(1)) 2^{\binom{n}{2} -1}$. On the other hand, by pairing graphs with their complements, the size of an $H$-intersecting family is at most $2^{\binom{n}{2} - 1}$).
He further conjectured that there is some universal constant $c> 0$ such that for $H$ that is not a star forest, the largest $H$-intersecting family on $n$ vertices has size at most $(1-c)2^{\binom{n}{2} - 1}$. 
It would suffice to prove this conjecture for the 3-edge-path $P_3$. 
It had been conjectured \cite{CGFS86} that the largest $P_3$-intersecting family is a $P_3$-umvirate, but this is false, as Christofides constructed a $P_3$-intersecting family of graphs on $6$ vertices with size $17 \cdot 2^8 > 2^{\binom{6}{2} - 3}$.
See \cite{BBDDDMV19, KMS12} for additional related work.

\subsection*{Organization}
In \cref{sec:reduction} we explain the framework introduced in \cite{EFF12} to reduce the main result \cref{thm:main-agreeing} to a certain linear program. The reduction here is essentially the same as in \cite{EFF12}, though we need to state everything for cross-intersecting families instead of a single intersecting family as was done in \cite{EFF12}. 
In \cref{sec:fourier}, we reduce the verification of dual constraints to a finite computation---this is where our arguments start to differ from \cite{EFF12}.
In \cref{sec:constructions}, we construct feasible dual solutions and verify their validity.
In \cref{sec:additional-proofs}, we explain how to deduce \cref{thm:main-intersecting} and \cref{cor:smaller_p} from \cref{thm:main-agreeing}.
In \cref{sec:discussion}, we conclude with some brief remarks on the different possible directions for generalization.

\subsection*{Acknowledgements}
The first author thanks Pat Devlin for introducing him to this problem, and Jonathan Tidor for some helpful discussions.
We thank Noam Lifshitz for pointing out to us that one can deduce \cref{cor:smaller_p} using \cite{EFF12}.

\section{Reduction to a linear program} \label{sec:reduction}

In this section, we explain the solution framework introduced in \cite{EFF12} that reduces the problem to a certain linear program. 
This reduction is analogous to Delsarte's linear programming bound for error correcting codes \cite{Del73}.

\subsection{Fourier analytic bound}

Every graph on $n$ labeled vertices is identified with its edge set indicator vector, viewed as an element of $\F_2^N$ with $N = \binom{n}{2}$. 
Families of graphs correspond to indicator functions $f : \F_2^{N} \to \{0,1\}$. 
We use the following standard conventions for Fourier analysis on $\F_2^N$, normalized with the uniform measure on the physical space and the counting measure on frequency space.
For a function $f: \F_2^N \to \R$, its Fourier transform is given by 
\begin{equation*}
    \wh f(\lambda) = \E_{x \in \F_2^N} f(x)\cdot (-1)^{\lambda \cdot x}, \qquad \lambda \in \F_2^N
\end{equation*}
whereas the Fourier inversion formula is given by
\begin{equation*}
    f(x) = \sum_{\lambda \in \F_2^N} \wh f(\lambda) \cdot (-1)^{\lambda \cdot x}, \qquad x \in \F_2^N.
\end{equation*}
Given two function $f, g \colon \F_2^N \to \R$, we denote their convolution by $f * g \colon \F_2^N \to \R$ by $(f*g)(x) = \E_y f(y) g(x-y)$. We have the identity $\wh {f*g}(\lambda) =\wh f (\lambda) \wh g (\lambda)$.

The following claim can be viewed as a weighted version of Hoffman's bound on the independence number of a regular graph in terms of its eigenvalues.
It is also known as a linear programming bound.

\begin{proposition} \label{prop:Hoffman}
Let $f, g : \F_2^{N} \to \{0,1\}$ and let $\nu: \F_2^{N} \to \R$ satisfy $\E[\nu] = 1$ and $\angles{f * g, \nu} = 0$. Then 
\begin{itemize}
    \item (\textbf{Upper bound})We have
    \begin{equation}\label{eqn:Hoffman}
\E[f]\E[g] \leq \left(\frac{m}{1+m}\right)^2, 
\end{equation}
where $m = \max_{\lambda\neq 0}\abs{\wh \nu(\lambda)}$ is the largest magnitude among nontrivial eigenvalues of $\nu$. 
\item (\textbf{Maximal families}) If equality holds, then $\E[f] = \E[g]$, and $\wh f(\lambda)=\wh g(\lambda) = 0$ for all $\lambda \neq 0$ with $|\wh \nu(\lambda)| < m$.
\item (\textbf{Stability}) For all $\epsilon, \delta \in (0,1]$, if $\E[f]\E[g] \ge (m/(1+m))^2 - \epsilon$, then
\begin{equation*}
    \sum_{\substack{\lambda \neq 0\\|\wh \nu(\lambda)| \le (1-\delta)m}} \wh f(\lambda)^2 \le \frac{(1+m)^4}{\delta(2-\delta) m^2}\epsilon.
\end{equation*}
\end{itemize}

\end{proposition}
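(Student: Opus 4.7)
My plan is to follow the classical Fourier / linear programming argument. Starting from Plancherel, $\angles{f*g, \nu} = \sum_\lambda \wh f(\lambda) \wh g(\lambda) \wh \nu(\lambda)$ (using $\widehat{f*g} = \wh f \wh g$ under these normalizations), the hypothesis $\angles{f*g,\nu} = 0$ together with $\wh \nu(0) = \E[\nu] = 1$ rearranges to
\begin{equation*}
ab \;=\; -\sum_{\lambda \neq 0} \wh f(\lambda)\,\wh g(\lambda)\,\wh \nu(\lambda),
\end{equation*}
where $a = \E[f]$ and $b = \E[g]$. For the upper bound, I would bound $|\wh\nu(\lambda)| \le m$ on $\lambda \neq 0$, apply Cauchy--Schwarz in the Fourier sum, and use Parseval $\sum_{\lambda \neq 0} \wh f(\lambda)^2 = a - a^2$ (and the analog for $g$) to obtain $ab \le m\sqrt{a(1-a)\,b(1-b)}$. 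A second Cauchy--Schwarz in the two-dimensional form $\sqrt{ab} + \sqrt{(1-a)(1-b)} \le 1$ then gives $\sqrt{ab} \le m/(1+m)$, as claimed.

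Uniqueness reduces to chasing the equality cases. The inner Cauchy--Schwarz forces $\wh f(\lambda)\wh g(\lambda) = 0$ whenever $\lambda \ne 0$ and $|\wh \nu(\lambda)| < m$, and the outer Cauchy--Schwarz forces $a = b$; the sign-alignment needed to saturate $\left|\sum \wh f \wh g \wh \nu\right| \le \sum |\wh f \wh g|\cdot|\wh \nu|$ then gives the stated conclusion on the shared Fourier support.

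For the stability claim I would refine the argument by splitting the nonzero frequencies into $T = \{|\wh \nu(\lambda)| > (1-\delta)m\}$ and its complement $S$. Applying Cauchy--Schwarz asymmetrically as $\sum_{\lambda \ne 0}|\wh f|\cdot|\wh g \wh \nu| \le \sqrt{\sum \wh g^2}\sqrt{\sum \wh f^2 \wh \nu^2}$, together with $|\wh \nu|^2 \le (1-\delta)^2 m^2$ on $S$ and $\le m^2$ on $T$, yields
\begin{equation*}
ab \;\le\; m\sqrt{b(1-b)}\,\sqrt{a(1-a) - \delta(2-\delta)\, X},
\end{equation*}
where $X = \sum_{\lambda \in S}\wh f(\lambda)^2$. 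Squaring and rearranging isolates $\delta(2-\delta)\,X$ in terms of the slack $m^2(1-a)(1-b) - ab$. To bound this slack by $O(\epsilon)$, I would use the identity $m^2(1-s)^2 - s^2 = (m+1)(w_* - s)\cdot(m + (1-m)s)$ with $s = \sqrt{ab}$ and $w_* = m/(1+m)$, combined with the elementary inequality $\sqrt{(1-a)(1-b)} \le 1 - \sqrt{ab}$ and $w_* - s \le \epsilon/w_*$ (which follows from $ab \ge w_*^2 - \epsilon$). The main obstacle is the asymmetric factor of $a/(1-b)$ that appears after rearrangement: it must be absorbed either by symmetrizing (deriving the corresponding inequality with $f$ and $g$ swapped and averaging) or by noting that $X \le a(1-a)$ is automatically small when $a$ is close to $0$ or $1$, yielding the stated prefactor $(1+m)^4/(m^2\delta(2-\delta))$.
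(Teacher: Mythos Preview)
Your upper bound and uniqueness arguments are essentially the paper's. The paper also reaches $ab \le m\sqrt{a(1-a)}\sqrt{b(1-b)}$ via Cauchy--Schwarz and Parseval, and then applies AM--GM (equivalent to your inequality $\sqrt{ab} + \sqrt{(1-a)(1-b)} \le 1$) to solve the resulting quadratic in $\sqrt{ab}$. The equality analysis is likewise the same, modulo a slightly imprecise attribution of which tightness condition produces which conclusion: it is the Fourier Cauchy--Schwarz $\sum|\wh f||\wh g| \le (\sum\wh f^2)^{1/2}(\sum\wh g^2)^{1/2}$ that forces $|\wh f|$ and $|\wh g|$ to be proportional (hence to share support), while the pointwise step $|\wh\nu(\lambda)| \le m$ then forces that shared support into $\{\lambda : |\wh\nu(\lambda)| = m\}$.

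For stability, your starting inequality $ab \le m\sqrt{b(1-b)}\sqrt{a(1-a) - \delta(2-\delta)X}$ is correct and is exactly what the paper obtains (they phrase it via an auxiliary $\widetilde f$ with $\sum_{\lambda \neq 0}\widetilde f(\lambda)^2 = a(1-a) - \delta(2-\delta)X$). The divergence is in the next step. Your ``square and rearrange'' produces the asymmetric factor $a/(1-b)$, and neither of your suggested fixes clearly removes it: swapping $f$ and $g$ bounds $\sum_S \wh g(\lambda)^2$, not $\sum_S \wh f(\lambda)^2$, so averaging does not control $X$; and while $X \le a(1-a)$ helps at the extremes, combining $a/(1-b) \le m^2(1-a)/b$ with $b \ge w_*^2 - \epsilon$ only yields a prefactor of order $(1+m)^4/m^3$ for small $m$, not the stated $(1+m)^4/m^2$. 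The paper instead uses a multiplicative trick at this point: since $a(1-a) = \sum_{\lambda\neq 0}\wh f(\lambda)^2 \le 1$, one has
\[
a(1-a) - \delta(2-\delta)X \;\le\; \bigl(1 - \delta(2-\delta)X\bigr)\cdot a(1-a),
\]
so with $c^2 := 1 - \delta(2-\delta)X$ the inequality becomes $ab \le cm\sqrt{a(1-a)\,b(1-b)}$. This is precisely the upper-bound inequality with $m$ replaced by $cm$, whence $ab \le (cm/(1+cm))^2$; a short direct calculation then gives $(m/(1+m))^2 - (cm/(1+cm))^2 \ge m^2(1-c^2)/(1+m)^4$, which rearranges to the claimed bound on $X$.
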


\begin{proof}[Proof (Upper bound).]
	Taking a Fourier transform, $0 = \angles{f * g, \nu} = \sum_{\lambda \in \F_2^N} \wh f(\lambda) \wh g(\lambda) \wh \nu(\lambda)$. Since this sum equals 0, the contribution from $\lambda = 0$ must cancel with the rest of the summation:
	\begin{align*}
	\E [f]\E [g]
	&= -\sum_{\lambda \neq 0}  \widehat{\nu}(\lambda)\wh f(\lambda)\wh g(\lambda)\\
	&\leq \max_{\lambda \neq 0}\Big[|\wh \nu(\lambda)|\Big] \sum_{\lambda \neq 0}|\wh f(\lambda)||\wh g(\lambda)|\tag{$*$}\\
	& \le m \Big(\sum_{\lambda \neq 0} \wh f(\lambda)^2\Big)^{\frac 12}
	\Big(\sum_{\lambda \neq 0} \wh g(\lambda)^2\Big)^{\frac 12} \tag*{[Cauchy--Schwarz]}\\
	&= m(\E [f]-\E [f]^2)^{\frac 12}(\E [g]-\E [g]^2)^{\frac 12}\tag*{[Plancherel]}\\
	&= m\left(\E [f]\E [g]-\E [f]^2\E [g]-\E [f]\E [g]^2+\E [f]^2\E [g]^2\right)^{\frac 12}\\
	&\le m\left(\E [f]\E [g]-2\E [f]^{3/2}\E [g]^{3/2}+\E [f]^2\E [g]^2\right)^{\frac 12}\tag*{[AM-GM]}\\
	&= m((\E [f]\E [g])^{1/2} - \E [f]\E [g]).
	\end{align*}
	This is now a quadratic in $(\E [f]\E [g])^{1/2}$; solving for this quantity in terms of $m$ gives the desired result.
\end{proof}
\begin{proof}[Proof (Maximal families).]
For equality to hold, each inequality above must be tight. For the AM-GM step to be an equality, we must have $\E[f] = \E[g]$, assuming neither is identically zero. For the Cauchy--Schwarz step to be an equality, we must have $|\wh f(\lambda)|  = \alpha|\wh g(\lambda)|$ for some $\alpha$ and all $\lambda \neq 0$. Since $\hat f(0) = \E[f] = \E[g] = \hat g(0)$, we deduce $\alpha = 1$. In particular, $\hat f$ and $\hat g$ have the same support, and so if ($*$) is also an equality, we must have $|\wh \nu(\lambda)| = m$ whenever $|\wh f(\lambda)|, |\wh g(\lambda)| > 0$, which completes the proof.
\end{proof}
\begin{proof}[Proof (Stability).] Let $D := \sum_{\lambda \neq 0, |\wh \nu(\lambda)| < m} \wh f(\lambda)^2$. In order to get a tighter bound on $\E[f]\E[g]$, we replace $(*)$ with a tighter bound. Define 
\begin{equation*}
    \widetilde f(\lambda) := \begin{cases}
    \wh f(\lambda) & \lambda = 0 \text{ or } |\wh \nu (\lambda)| > (1-\delta)m\\
    \wh f(\lambda)\cdot(1-\delta) & \text{otherwise}.
    \end{cases}
\end{equation*}
With this definition, we have
\begin{align*}
	\E[f]\E[g]
	&= -\sum_{\lambda \neq 0}  \widehat{\nu}(\lambda)\wh f(\lambda)\wh g(\lambda)\\
	&\le \sum_{\substack{\lambda \neq 0\\|\wh \nu(\lambda)| > (1-\delta)m}} m|\wh f(\lambda)\wh g(\lambda)| + \sum_{\substack{\lambda \neq 0\\|\wh \nu(\lambda)| \le (1-\delta)m}} (1-\delta)m|\wh f(\lambda)\wh g(\lambda)|\\
	&= m \sum_{\lambda \neq 0}|\widetilde f(\lambda)||\wh g(\lambda)|\\
	& \le m \Big(\sum_{\lambda \neq 0} \widetilde f(\lambda)^2\Big)^{\frac 12}
	\Big(\sum_{\lambda \neq 0} \wh g(\lambda)^2\Big)^{\frac 12} \tag*{[Cauchy--Schwarz]}
	\end{align*}
Let us pause for a moment to consider this $\sum_{\lambda \neq 0} \widetilde f(\lambda)^2$ term. We can rewrite this as 
\begin{equation*}
\sum_{\lambda \neq 0} \widetilde f(\lambda)^2 = \sum_{\substack{\lambda \neq 0\\ |\wh \nu(\lambda)| > (1-\delta)m}} \wh f(\lambda)^2 + (1-\delta)^2\sum_{\substack{\lambda \neq 0\\|\wh \nu(\lambda)| \le (1-\delta)m}}\wh f(\lambda)^2 = \sum_{\lambda \neq 0} \wh f(\lambda)^2 - D\delta(2-\delta),
\end{equation*}
where we have plugged in the definition of $D$ from the beginning of the proof.
Since $\sum \wh f(\lambda)^2 \le 1$, the RHS is at most $(1-D\delta(2-\delta))\sum_{\lambda \neq 0} \wh f(\lambda)^2$. This multiplicative factor can now be pulled out and combined with the $m$, and the rest of the proof proceeds as before. To simplify notation, let $c^2 = 1-D\delta(2-\delta)$. Expanding out the final result, we obtain the upper bound
\begin{align*}
    \E[f]\E[g] \le \left(\frac{cm}{1+cm}\right)^2 &=
    \frac{m^2}{(1+m)^2} - \frac{m^2(1+cm)^2 - (1+m)^2(cm)^2}{(1+m)^2(1+cm)^2}\\
    &= \frac{m^2}{(1+m)^2} - \frac{m^2(1-c^2)+ 2m^3(c-c^2)}{(1+m)^2(1+cm)^2}\\
    &\le \frac{m^2}{(1+m)^2} - \frac{m^2(1-c^2)}{(1+m)^4}\\
    &=\left(\frac{m}{1+m}\right)^2 - \frac{D\delta(2-\delta) m^2}{(1+m)^4}.
\end{align*}
Thus $D\delta(2-\delta)m^2/(1+m)^4 \le \epsilon$. Solving for $D$ completes the proof.
\end{proof}

\subsection{A class of dual functions}
In order to apply \cref{prop:Hoffman} to the indicator functions $f,g$ of a cross-$t$-chromatic-agreeing pair of families, we need a suitable $\nu$.
\begin{lemma}\label{lem:nu_support_condition}
Let $f,g$ be indicator functions of a cross-$t$-chromatic-agreeing pair of families of graphs on $n$ labeled vertices. Let $N = \binom{n}{2}$ and $\nu : \F_2^N \to \R$ be supported on graphs whose complements are $(t-1)$-colorable. Then $\angles{f * g, \nu} = 0$.
\end{lemma}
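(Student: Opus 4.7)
The plan is to unravel the definition of convolution and show that the supports of $f*g$ and $\nu$ are disjoint, so the inner product vanishes pointwise.

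First, I would identify $\F_2^N$ with the set of graphs on $n$ labeled vertices, so that addition in $\F_2^N$ corresponds to edge-symmetric-difference $\triangle$. By definition,
\begin{equation*}
(f*g)(x) = \E_y f(y)\,g(x-y) = \E_y f(y)\,g(x+y),
\end{equation*}
since we are in characteristic $2$. Writing $y = G_1$ and $x + y = G_2$, we see that $(f*g)(x)$ is (up to normalization) the number of pairs $(G_1, G_2) \in \mc F_1 \times \mc F_2$ with $G_1 \triangle G_2 = x$.

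Next I would use the cross-$t$-chromatic-agreeing hypothesis: if $(f*g)(x) > 0$, then there exists at least one pair $(G_1, G_2) \in \mc F_1 \times \mc F_2$ with $G_1 \triangle G_2 = x$, and by hypothesis $\ol{G_1 \triangle G_2} = \ol{x}$ is not $(t-1)$-colorable. Hence $f*g$ is supported on the set of graphs $x$ whose complement $\ol x$ is \emph{not} $(t-1)$-colorable. On the other hand, $\nu$ is supported by assumption on graphs $x$ whose complement $\ol x$ \emph{is} $(t-1)$-colorable. These two supports are disjoint, so $(f*g)(x)\,\nu(x) = 0$ for every $x \in \F_2^N$, and therefore
\begin{equation*}
\angles{f * g, \nu} = \E_x (f*g)(x)\,\nu(x) = 0.
\end{equation*}

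There is no real obstacle here: the lemma is essentially a definitional check once one makes the translation between $\F_2^N$-arithmetic and graph-theoretic symmetric difference. The only thing to be careful about is getting the direction of the support condition right (complements being $(t-1)$-colorable vs.\ not), which falls out cleanly from the definition of cross-$t$-chromatic-agreeing.
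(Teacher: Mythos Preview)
Your proof is correct and essentially identical to the paper's: both expand $\angles{f*g,\nu} = \E_{x,y}\,f(x)g(y)\nu(x+y)$ and observe that whenever $f(x)=g(y)=1$, the complement of $x+y$ is not $(t-1)$-colorable, forcing $\nu(x+y)=0$. You phrase it as ``the supports of $f*g$ and $\nu$ are disjoint,'' which is the same observation.
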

\begin{proof}
Since $f,g$ are cross-$t$-chromatic-agreeing, for any $x$ with $f(x) = 1$ and $y$ with $g(y) = 1$, $x+y$ is the edge-indicator vector of a graph whose complement is not $(t-1)$-colorable. Hence, $\nu(x+y) = 0$. It follows that $\angles{f * g, \nu} = \E_{x,y}[f(x)g(y)\nu(x+y)] = 0$.
\end{proof}

In view of \cref{prop:Hoffman}, it therefore suffices to construct some function $\nu: \F_2^N \to \R$ that is supported on graphs whose complements are $(t-1)$-colorable, with $\E[\nu] = 1$ and $ m = \max_{\lambda\neq 0}\big[|\wh \nu(\lambda)|\big] \le 1/(2^{\binom{t}{2}}-1)$. For stability purposes we would also like to ensure that $|\wh \nu(\lambda)| < m - \delta$ for some fixed $\delta > 0$ whenever $\lambda$ has more than $\binom{t}{2}$ edges.

This condition on the support of $\nu$ is complicated to state, but easy to work with, as it is linear. The following useful class of functions (and therefore, their linear combinations) satisfy this condition.

\begin{lemma}\label{lem:nu_construction_condition} Let $N = \binom{n}{2}$, let $\mc T$ be an arbitrary distribution on $(t-1)$-colorable graphs, and let $\{f_G\}$ be a set of functions indexed by graphs on $n$ vertices. If $\nu:\F_2^N \to \R$ satisfies
$$\wh \nu(\lambda) = (-1)^{e(\lambda)} \E_{T \sim \mc T} f_T(T \cap \lambda),$$ 
then $\nu$ is supported on graphs whose complements are $(t-1)$-colorable.
\end{lemma}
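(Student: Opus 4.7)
The plan is to apply Fourier inversion and then exploit character orthogonality on the edges outside $T$. Concretely, I would write
\begin{equation*}
    \nu(x) = \sum_{\lambda \in \F_2^N} \wh \nu(\lambda)(-1)^{\lambda \cdot x} = \E_{T \sim \mc T} \sum_{\lambda \in \F_2^N} (-1)^{e(\lambda)} f_T(T \cap \lambda) (-1)^{\lambda \cdot x},
\end{equation*}
and show that for every $x$ whose complement $\ol x$ is not $(t-1)$-colorable and for every $T$ in the support of $\mc T$, the inner sum over $\lambda$ vanishes.

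The first simplification is to absorb the sign $(-1)^{e(\lambda)}$ into the character: since $e(\lambda) = \mathbf{1} \cdot \lambda$ as a parity, we have $(-1)^{e(\lambda)+\lambda \cdot x} = (-1)^{\lambda \cdot \ol x}$, where $\ol x \in \F_2^N$ is the edge-indicator vector of the complement graph of $x$. So the inner sum becomes $S_T(x) := \sum_\lambda f_T(T \cap \lambda)(-1)^{\lambda \cdot \ol x}$.

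Next, decompose each $\lambda$ into its restriction $\lambda_T$ to the edges of $T$ and its restriction $\lambda_{T^c}$ to the non-edges. Since $f_T(T \cap \lambda)$ depends only on $\lambda_T$, the sum factors as
\begin{equation*}
    S_T(x) = \Bigl(\sum_{\lambda_T} f_T(\lambda_T)(-1)^{\lambda_T \cdot \ol x|_T}\Bigr) \Bigl(\sum_{\lambda_{T^c}} (-1)^{\lambda_{T^c} \cdot \ol x|_{T^c}}\Bigr).
\end{equation*}
By orthogonality of characters on $\F_2^{T^c}$, the second factor vanishes unless $\ol x|_{T^c} = 0$, that is, unless $\ol x \subseteq T$ as edge sets.

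This is the whole argument: if $\ol x$ is not $(t-1)$-colorable, then $\ol x$ cannot be a subgraph of any $(t-1)$-colorable $T$ (since subgraphs of $(t-1)$-colorable graphs are $(t-1)$-colorable), so $S_T(x) = 0$ for every $T$ in the support of $\mc T$, and hence $\nu(x) = 0$. The argument is essentially routine Fourier manipulation; the only thing to watch out for is keeping the sign conventions and the complement bookkeeping straight, which is exactly what motivates building the factor $(-1)^{e(\lambda)}$ into the definition of $\wh \nu$ in the first place.
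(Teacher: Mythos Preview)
Your proof is correct. It differs in presentation from the paper's argument, though the underlying Fourier content is the same. The paper argues by linearity: the set of $\wh\nu$ whose inverse transform has the desired support is a subspace, and for each $(t-1)$-colorable $T$ the point mass $\nu_T$ at $\overline T$ has $\wh\nu_T(\lambda)=(-1)^{e(\lambda)}(-1)^{e(\lambda\cap T)}$; varying over $T'\subseteq T$ these span all functions of the form $(-1)^{e(\lambda)}f_T(T\cap\lambda)$, so the claim follows. You instead invert directly, absorb $(-1)^{e(\lambda)}$ into the character to get $(-1)^{\lambda\cdot\ol x}$, and use orthogonality over the coordinates outside $T$ to see the sum vanishes unless $\ol x\subseteq T$. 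Your route is a bit more self-contained (no need to name a basis or invoke a subspace argument); the paper's route makes the structure slightly more transparent by showing exactly which building blocks generate all admissible $\wh\nu$. Either way the computation is short, and the two arguments are essentially dual to one another.
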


\noindent\textit{Remark.}
This is a slight generalization of {\cite[Lemma 2.8]{EFF12}}, but the proof is identical.\footnote{To obtain this version, replace the words ``OCC Spectrum'' with ``the Fourier transform of some $\nu$ supported on graphs whose complements are $(t-1)$-colorable,'' and instead of a distribution on bipartite graphs take a distribution on $(t-1)$-colorable graphs.} 
We include a concise proof here for completeness.
\begin{proof}
The set of functions $\wh \nu$ such that $\nu$ is supported on graphs whose complements are $(t-1)$-colorable forms a subspace of the space of functions $\F_2^N \to \R$. For $(t-1)$-colorable $T$, let $\nu_T$ be a point mass at its complement $\overline{T}$. Then $\wh \nu_T(\lambda) = (-1)^{e(\lambda)}(-1)^{e(\lambda \cap T)}$ lies in this subspace. The set of functions $\{\wh \nu_{T'}\}_{T' \subset T}$ forms a basis for the space of all functions of the form $(-1)^{e(\lambda)}f_T(T \cap \lambda)$, where $f_T: \F_2^N \to \R$ is arbitrary. The lemma follows by linearity.
\end{proof}

Although this gives us quite a large class of functions in which to search for an optimal $\wh\nu$, in practice we will only need the uniform distribution on complete $(t-1)$-partite graphs. In this case, $T$ may be viewed as a uniform random 3-coloring of $[n]$ and $T \cap \lambda$ is a random subgraph of $\lambda$ given by choosing a uniform random 3-coloring of its vertices and deleting all monochromatic edges.

\begin{definition}\label{def:G_q}
For a graph $G$ on $n$ labeled vertices, let $[q]^{V(G)}$ be the set of maps $\varphi \colon V(G) \to [q]$, viewed as $q$-colorings of $V(G)$ (not necessarily proper). 
For each coloring $\varphi \colon V(G) \to [q]$, define $G_\varphi$ to be the subgraph of $G$ formed by deleting all monochromatic edges of $G$, and then deleting all isolated vertices from the result. Let $G_q$ be the random graph $G_\varphi$ given by choosing $\varphi \sim \Unif([q]^{V(G)})$.
\end{definition}

\begin{proposition}\label{prop:construction} Let $t \in \{3,4\}$. There exists a set
of unlabeled graphs $\{H\}$, coefficients $\{c_H\}$ and $\delta > 0$ so that for any $G$ on $n$ labeled vertices, we have that
\begin{equation}
    \mu(G) := (-1)^{e(G)}\sum_{H} c_H \P[G_q \cong H]
\end{equation}
satisfies the following conditions.
\begin{enumerate}[(a)]
    \item $\mu(0) = 2^{\binom{t}{2}}-1$.
    \item $|\mu(G)| \le 1$ for all $G \neq \varnothing$
    \item $|\mu(G)| \le 1-\delta$ whenever $G$ has more than $\binom{t}{2}$ edges.
\end{enumerate} 
\end{proposition}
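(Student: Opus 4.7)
The plan is to construct $\mu$ explicitly as a finite linear combination of terms $(-1)^{e(G)}\P[G_q \cong H]$ with $q = t-1$ and $H$ ranging over a carefully chosen finite collection $\mc H$ of small $(t-1)$-colorable graphs, and then verify the three conditions by a finite case analysis. Condition (a) is immediate: for $G = \varnothing$, every coloring $\varphi$ trivially gives $G_\varphi = \varnothing$ after deleting isolated vertices, so $\mu(\varnothing) = c_\varnothing$, and no $H \neq \varnothing$ contributes because $\P[\varnothing_q \cong H] = 0$. This forces $c_\varnothing = 2^{\binom{t}{2}} - 1$, and also explains why we need nontrivial $c_H$ for other $H$ to offset the large constant term on non-empty $G$.

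To choose the remaining coefficients, I would impose $\mu(G) = \pm 1$ (with the sign $(-1)^{e(G)}$) on a finite list of extremal graphs $G$---starting with $K_t$ itself and adjoining other small non-$(t-1)$-colorable graphs on which the upper bound in (b) is expected to be tight---together with enough auxiliary small graphs to make the resulting system square in the unknowns $c_H$. Because every $\P[G_q \cong H]$ is a rational number depending only on the isomorphism class of $G$, each such condition is a linear equation in $\{c_H\}$, and one hopes to choose $\mc H$ so that the system is solvable. For $t = 3$ this scheme recovers the EFF construction with $\mc H$ a short list of small bipartite graphs; for $t = 4$ the analogous list of small $3$-colorable graphs is substantially bigger, but the system is still finite and explicit.

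The central technical step is reducing the a priori infinite family of inequalities in (b) and (c) to a finite check. For each fixed $H$, $\P[G_q \cong H]$ is the probability that a uniform random $(t-1)$-coloring of $V(G)$ turns the non-monochromatic edges of $G$ into a graph isomorphic to $H$. Since any edge is monochromatic with probability $1/(t-1)$, the number of surviving edges of $G_\varphi$ concentrates around $e(G)\cdot(1 - 1/(t-1))$, so the event $\{e(G_\varphi) = e(H)\}$ has probability at most $(t-1)^{-\Omega(e(G))}$ once $e(G)$ is large. As $\mc H$ is finite and the $c_H$ are bounded, this forces $|\mu(G)| \to 0$ as $e(G) \to \infty$; combined with the obvious fact that $\mu$ is invariant under adjoining isolated vertices to $G$, this confines (b) and (c) to a finite, effectively enumerable list of isomorphism classes.

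The main obstacle is the $t = 4$ case. The list of small $4$-chromatic graphs one must dominate is dramatically richer than in the bipartite setting, so both $\mc H$ and the set of test graphs $G$ are much larger, and arranging a uniform positive slack $\delta > 0$ in (c) is delicate---most obstructions are near-tight. The real content of the proof, deferred to \cref{sec:fourier,sec:constructions}, is exactly to make this program systematic: give an algorithm that shows only finitely many $G$ matter, specify $\mc H$ and solve for the $c_H$ (in practice with computer assistance), and then verify the finitely many resulting inequalities, replacing the hand casework of \cite{EFF12}.
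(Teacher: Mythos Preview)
Your overall plan coincides with the paper's: take $q=t-1$, fix a finite list of $c_H$ (with $c_\varnothing=2^{\binom t2}-1$ forced), reduce (b)--(c) to a bounded search, and verify by computer. The observation that $\mu$ is unchanged by adjoining isolated vertices is also used in the paper.

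The gap is in your reduction step. The assertion that $\P[e(G_\varphi)=e(H)]\le (t-1)^{-\Omega(e(G))}$ is false in general. Take $G=K_n$ with $q=2$: then $e(G_\varphi)=a(n-a)$ where $a$ is the size of one color class, and $\P[e(G_\varphi)=0]=2^{1-n}=2^{-\Theta(\sqrt{e(G)})}$, not $2^{-\Omega(e(G))}$. The edge indicators $X_e=\mathbf 1\{e\text{ non-monochromatic}\}$ are pairwise uncorrelated (a short computation), so $\Var e(G_\varphi)$ is linear in $e(G)$, but they are \emph{not} independent, and bounded-differences concentration is too weak when $G$ has high-degree vertices. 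So ``concentration of the surviving edge count'' does not deliver the decay you claim, and in any case would not produce the explicit constants one needs to know \emph{exactly which} finitely many $G$ remain to be checked by machine.

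The paper replaces this heuristic with a precise combinatorial argument. For connected $G$ on $n>n_0$ vertices one double-counts colorings against induced $n_0$-vertex subgraphs (\cref{lem:restricted_colorings,lem:double_count}): every coloring of $G$ landing on a fixed small $H$ is charged to some $G'\in\binom{G}{n_0}$ with an explicit multiplicative loss, and after maximizing over $G'\subseteq K_{n_0}$ and over $n$ one gets a bound that depends only on data computable from graphs on $n_0$ vertices (the $DC$ factors in \cref{lem:reduction_to_bounded_connected}). Disconnected $G$ are handled separately by a contraction lemma (\cref{lem:contraction_bound}): identifying two vertices in different components can only increase $\sum_H|c_H|\,\P[G_q\cong H]$ provided one replaces $|c_H|$ by the controlled $\widetilde{c_H}$. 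With $n_0=9$ this reduces the whole verification to enumerating $q$-colorings of all graphs on at most nine vertices. That explicit reduction, not a concentration estimate, is the ingredient your outline is missing.
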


The constructions are presented in Section \ref{sec:constructions}, along with verification of Proposition \ref{prop:construction}. Assuming the proposition, we are nearly in a position to prove the main result, \cref{thm:main-agreeing}.
We first include two results from Boolean analysis.

\begin{lemma}[{\cite[Lemma 2.8]{Fri08}}]\label{lem:Friedgut}
Let $f:\F_2^N\to\{0,1\}$ be a monotone Boolean function with $\E[f] = 2^{-k}$ and $\wh f(S) = 0$ whenever $|S|> k$. Then $f$ is a $k$-umvirate.\footnote{Here $f$ being an $k$-umvirate means that there is some fixed set of $k$ coordinates so that $f$ is $1$ if and only if the input is $1$ on each of these $k$ coordinates.
\cite[Lemma 2.8]{Fri08} is stated for a biased distribution with $p < 1/2$, but with brief consideration it is easy to see the proof works for the unbiased case $p = 1/2$ as well, which is what we have stated here.}
\end{lemma}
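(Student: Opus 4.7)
The plan is to analyze the upper set $A = f^{-1}(1) \subseteq \{0,1\}^N$, whose size is $|A| = 2^N \E[f] = 2^{N-k}$, and show that it has a unique minimal element of Hamming weight exactly $k$. The Fourier hypothesis $\wh f(S) = 0$ for $|S| > k$ is equivalent to $f$ having multilinear degree at most $k$ as a polynomial in $\{0,1\}$-variables, so I will work with degree throughout.

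Let $a^{(1)}, \ldots, a^{(m)}$ be the minimal elements of $A$ and write $w_j = |a^{(j)}|$. A lower bound $w_j \ge k$ follows from monotonicity: the principal upset $\{x : x \ge a^{(j)}\}$ has size $2^{N-w_j}$ and is contained in $A$. For the matching upper bound, I would expand $f$ by inclusion--exclusion over these $m$ principal upsets,
\[
f(x) \;=\; \sum_{\emptyset \ne J \subseteq [m]} (-1)^{|J|+1} \prod_{i \in \bigcup_{j\in J}\mathrm{supp}(a^{(j)})} x_i,
\]
using $\ind[x \ge a] = \prod_{i \in \mathrm{supp}(a)}x_i$. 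The coefficient of the monomial $\prod_{i \in \mathrm{supp}(a^{(j_0)})} x_i$ receives contributions only from $J$'s with $\bigcup_{j \in J}\mathrm{supp}(a^{(j)}) = \mathrm{supp}(a^{(j_0)})$; this forces $a^{(j)} \le a^{(j_0)}$ for every $j \in J$, and by minimality of the $a^{(j)}$'s this in turn forces $J = \{j_0\}$. Hence the coefficient equals $1 \ne 0$, so $\deg f \ge w_{j_0}$ and $w_{j_0} \le k$.

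Combining the two bounds, every minimal element has weight exactly $k$. To rule out having more than one, I would argue that if $a^{(1)} \ne a^{(2)}$ are both weight-$k$ minimal elements then $|\mathrm{supp}(a^{(1)}) \cup \mathrm{supp}(a^{(2)})| \ge k+1$, so $\{x : x \ge a^{(1)}\} \cap \{x : x \ge a^{(2)}\}$ has size at most $2^{N-k-1}$, and inclusion--exclusion on just these two upsets already yields
\[
|A| \;\ge\; 2\cdot 2^{N-k} - 2^{N-k-1} \;>\; 2^{N-k},
\]
contradicting the prescribed size. Therefore $m = 1$ and $A$ is the principal upset of a single weight-$k$ element, which is exactly the statement that $f$ is a $k$-umvirate based on $\mathrm{supp}(a^{(1)})$.

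The main step to watch is the non-cancellation argument inside the inclusion--exclusion expansion of $f$: one must verify, crucially using the minimality of the $a^{(j)}$'s and the observation that one minimal element of an upset cannot dominate another, that the coefficient of $\prod_{i \in \mathrm{supp}(a^{(j_0)})} x_i$ is not killed by contributions from larger $J$'s. Once this is in hand, the rest of the argument is routine arithmetic matching forced weights and upset sizes against the prescribed measure $2^{-k}$.
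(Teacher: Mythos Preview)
The paper does not give its own proof of this lemma; it is quoted from \cite[Lemma~2.8]{Fri08} with only a footnote remarking that the $p=1/2$ case follows by the same reasoning. So there is nothing in the paper to compare against directly.

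Your argument is correct and entirely self-contained. The key observations are all sound: (i) the Fourier hypothesis $\wh f(S)=0$ for $|S|>k$ is equivalent to $f$ having multilinear degree at most $k$ over $\{0,1\}$-variables, since each character $\prod_{i\in S}(1-2x_i)$ has degree $|S|$; (ii) the inclusion--exclusion expansion of the indicator of a union of principal upsets is exactly as you wrote, and since the minimal elements form an antichain, the only $J$ whose union of supports equals $\mathrm{supp}(a^{(j_0)})$ is $J=\{j_0\}$, so that monomial survives with coefficient $1$; (iii) the two bounds pin every $w_j$ at $k$; and (iv) two distinct weight-$k$ minimal elements would already force $|A|\ge 3\cdot 2^{N-k-1}>2^{N-k}$.

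Compared to Friedgut's original, which is set up in the $p$-biased world and phrased in terms of influences, your route is more combinatorial and arguably more transparent in the unbiased case: it reads the degree of $f$ directly off the antichain of minimal elements via inclusion--exclusion, rather than going through analytic inequalities. The cost is that your argument does not obviously adapt to $p\neq 1/2$, whereas Friedgut's does; but for the purposes of this paper only $p=1/2$ is needed, so that is no loss.
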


\begin{theorem}[{\cite[Theorem 3]{KS02}}]\label{thm:K-S}
For every $k$, there exists $C > 0$ and $K$ such that for every $f: \F_2^N \to \{0,1\}$ there exists $g:\F_2^N \to \{0,1\}$ that depends on at most $K$ coordinates and satisfies
\begin{equation*}
    \P_{x \in \{0,1\}^N} [f(x) \neq g(x)] \le C\sum_{|S| > k} \wh f(S)^2.
\end{equation*}
\end{theorem}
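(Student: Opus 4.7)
The plan is to combine a rounding step with a structural result about low-degree polynomials that are close to Boolean. Set $T := \sum_{|S|>k}\wh f(S)^2$ and let $p := f^{\le k}$ denote the low-degree truncation $p(x) := \sum_{|S|\le k}\wh f(S)(-1)^{S\cdot x}$. By Parseval, $\|f - p\|_2^2 = T$. Let $[p](x)$ denote the nearest element of $\{0,1\}$ to $p(x)$; since $f$ is $\{0,1\}$-valued, $f(x)\ne [p](x)$ forces $|p(x) - f(x)| \ge 1/2$, so Markov gives $\P[f \ne [p]] \le 4T$. Thus it suffices to show that $[p]$ agrees with some $K$-junta on all but an $O_k(T)$-fraction of inputs, with $K = K(k)$ depending only on the degree.

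The heart of the matter is the following structural lemma: every polynomial $q \colon \F_2^N \to \R$ of degree $\le k$ with $\|q - [q]\|_2^2 \le \eta$ has $[q]$ agreeing with some $K_k$-junta outside a set of measure $O_k(\eta)$. Applied with $q = p$ and $\eta = T$ this yields the theorem. To prove the lemma, I call a coordinate $i$ \emph{influential} if $\mathrm{Inf}_i(q) := \sum_{S \ni i}\wh q(S)^2 \ge \tau_k$, where $\tau_k$ is a constant depending only on $k$. Since $\sum_i \mathrm{Inf}_i(q) \le k\|q\|_2^2 = O_k(1)$, there are at most $O(k/\tau_k)$ influential coordinates; let $J$ be this set.

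For each restriction $y \in \F_2^J$ the polynomial $q_y$ in the non-junta variables has degree $\le k$ and every individual influence below $\tau_k$. The key tool is the Bonami--Beckner hypercontractive inequality $\|r\|_4 \le 3^{k/2}\|r\|_2$ valid for degree-$k$ polynomials $r$, which gives strong anti-concentration. Together with $\|q - [q]\|_2^2 \le \eta$, this forces $q_y$ to concentrate near some constant $m_y \in \{0,1\}$ except on a set of measure $O_k(\eta)$ averaged over $y$. The target junta is $g(x) := m_{x|_J}$, which depends only on coordinates in $J$ and agrees with $[q](x)$ outside a set of measure $O_k(\eta)$.

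The main obstacle is keeping $K = K(k)$ independent of $\eta$. A naive threshold $\tau = \tau(\eta)$ would easily yield a junta, but with $K$ depending on $\eta$. To obtain $K = O_k(1)$, the hypercontractive estimate must do extra work: once the (constantly many) influential coordinates are fixed, the residual polynomial must be shown to be \emph{automatically} close to Boolean on a typical fiber, even though $\tau_k$ has not been tuned relative to $\eta$. Equivalently, one inducts on the degree $k$: having isolated the level-$k$ mass into the junta set, the residual fiber-wise polynomial lives essentially in the degree-$(k-1)$ world and is captured by the lemma at the previous level, with constants tracked explicitly. This inductive bootstrapping is what distinguishes Kindler--Safra from Friedgut's junta theorem and is the technical crux of the argument.
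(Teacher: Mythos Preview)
The paper does not prove this statement at all: \cref{thm:K-S} is quoted verbatim from Kindler--Safra \cite{KS02} and used as a black box in the stability argument, with no proof or sketch provided. So there is nothing in the paper to compare your attempt against.

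As for the sketch itself, the overall architecture you describe---truncate to $f^{\le k}$, round, and then invoke a structural lemma saying that a degree-$k$ real polynomial close in $L^2$ to a Boolean function must be close to an $O_k(1)$-junta---is indeed the shape of the Kindler--Safra argument. The rounding step and the Markov bound $\P[f\ne[p]]\le 4T$ are fine. However, the part you flag as ``the technical crux'' is genuinely where the content lies, and your paragraph does not actually carry it out. Saying that after fixing the influential coordinates ``the residual fiber-wise polynomial lives essentially in the degree-$(k-1)$ world'' is not justified: restricting a degree-$k$ polynomial on a few coordinates still leaves a degree-$k$ polynomial, and one needs a real argument (in the original paper, a careful use of hypercontractivity together with a random-restriction / noise-stability style step) to show that on most fibers the residual is close to a constant, with error controlled by $\eta$ rather than by the fixed threshold $\tau_k$. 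As written, your sketch names the difficulty correctly but does not resolve it; it would not constitute a proof without filling in that step.
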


To deduce uniqueness we will need to apply \cref{lem:Friedgut}, for which we require monotonicity. To that end, we employ a shifting argument. For a family of graphs $\mc F$ on $n$ labeled vertices and $e \in \binom{[n]}{2}$, the \textit{compression of $\mc F$ in direction $e$}, denoted $C_e(\mc F)$, is given by replacing each $G \in \mc F$ with $G \cup \{e\}$ whenever $e \notin E(G)$ and $G \cup \{e\}$ is not already in $\mc F$. It is easy to see from this definition that $|C_e(\mc F)| = |\mc F|$. 
\begin{lemma}\label{lem:shifting} Let $(\mc F_1, \mc F_2)$ be a cross-$t$-chromatic-agreeing pair of families of graphs on $n$ labeled vertices. Let $C = C_{e_1} \circ \cdots \circ C_{e_k}$ for some $k \in \Z$, $ e_1,\ldots e_k \in \binom{[n]}{2}$. Then $(C(\mc F_1), C(\mc F_2))$ is also cross-$t$-chromatic-agreeing. Moreover, if $C(\mc F_1) = C(\mc F_2)$ is a $K_t$-conjunction, then $\mc F_1 = \mc F_2$ is a $K_t$-conjunction as well.
\end{lemma}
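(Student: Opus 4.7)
The plan is to prove both assertions by induction on $k$, reducing to the case $C = C_e$ of a single compression.

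\textbf{First assertion.} Given $G_1 \in C_e(\mc F_1)$ and $G_2 \in C_e(\mc F_2)$, our goal is to exhibit $(G_1^*, G_2^*) \in \mc F_1 \times \mc F_2$ with $G_1^* \triangle G_2^* \supseteq G_1 \triangle G_2$, so that non-$(t-1)$-colorability of $\ol{G_1^* \triangle G_2^*}$ (given by the agreeing hypothesis on $\mc F_1, \mc F_2$) transfers to the supergraph $\ol{G_1 \triangle G_2}$. Unpacking the definition, $G_i \in C_e(\mc F_i)$ means either $G_i \in \mc F_i$ or ($e \in G_i$ and $G_i \setminus \{e\} \in \mc F_i$), and $e \notin G_i$ forces both $G_i$ and $G_i \cup \{e\}$ to lie in $\mc F_i$. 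Doing case analysis on $(\ind_{e\in G_1}, \ind_{e\in G_2})$, a suitable $(G_1^*, G_2^*)$ can be found in each case. The key case is when $e$ lies in exactly one of $G_1, G_2$, say $e \in G_1 \setminus G_2$ with $G_1 \notin \mc F_1$: then $(G_1 \setminus \{e\}, G_2 \cup \{e\}) \in \mc F_1 \times \mc F_2$ has symmetric difference exactly $G_1 \triangle G_2$.

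\textbf{Second assertion.} Let $\mc F^* := C_e(\mc F_1) = C_e(\mc F_2) = \{G : G \triangle H \supseteq K\}$ for some $K_t$-subgraph $K$. If $e \notin K$, then $\mc F^*$ is closed under $\triangle\{e\}$, and the preimage structure of $C_e$ forces $\mc F_i = \mc F^*$. If $e \in K$, feasibility of $\mc F^*$ as an image of $C_e$ requires $e \in K \setminus H$, giving $\mc F^* \subseteq \{G : e \in G\}$; each $\mc F_i$ is then described by a ``swap function'' $\chi_i \colon \mc F^* \to \{0,1\}$, where $\chi_i(G) = 1$ means $G \in \mc F_i$ and $\chi_i(G) = 0$ means $G \setminus \{e\} \in \mc F_i$. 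The two $K_t$-conjunction preimages correspond to $\chi_i$ being identically $1$ (giving $\mc F^*$) or identically $0$ (giving $\{G \triangle \{e\} : G \in \mc F^*\}$, a $K_t$-conjunction with parameter $H \triangle \{e\}$).

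To eliminate all other swap patterns and conclude $\mc F_1 = \mc F_2$, we use the following witness: pick a $(t-1)$-partition $\pi$ of $[n]$ placing both endpoints of $e$ in one part and the remaining $t-2$ vertices of $K$ in distinct other parts. Its monochromatic edges $M_\pi$ satisfy $M_\pi \cap K = \{e\}$, and $\ol{M_\pi}$, the complete $(t-1)$-partite graph on $\pi$, is $(t-1)$-colorable. Setting $M'_\pi := M_\pi \setminus \{e\}$, the closure of $\mc F^*$ under $\triangle$ by graphs disjoint from $K$ implies $G^+ \triangle M'_\pi \in \mc F^*$ for every $G^+ \in \mc F^*$. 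If some $G^+ \in \chi_1^{-1}(1)$ and valid $\pi$ satisfy $\chi_2(G^+ \triangle M'_\pi) = 0$, then $(G^+, (G^+ \triangle M'_\pi) \setminus \{e\}) \in \mc F_1 \times \mc F_2$ has symmetric difference $M_\pi$ with $(t-1)$-colorable complement, contradicting cross-agreeing.

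The main obstacle is verifying the existence of such a witness (or the symmetric variant with the roles of $\chi_1$ and $\chi_2$ swapped) whenever $\chi_1, \chi_2$ are not both identically the same constant. This reduces to showing that $(A_1 \triangle M'_\pi) \cap B_2 \neq \varnothing$ or $(B_1 \triangle M'_\pi) \cap A_2 \neq \varnothing$ for some valid $\pi$, where $A_i := \chi_i^{-1}(1)$ and $B_i := \chi_i^{-1}(0)$. Since these four sets partition $\mc F^*$ in pairs and (for $n > t$) there are multiple valid $\pi$ yielding distinct shifts $M'_\pi$, a careful combinatorial argument on coset sizes within $\mc F^*$ yields the conclusion.
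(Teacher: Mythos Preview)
Your argument for the first assertion is correct and matches the paper's: both proceed by locating preimages $G_1^*,G_2^*$ whose symmetric difference contains $G_1\triangle G_2$.

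For the second assertion, however, your witness scheme is too weak and the promised ``combinatorial argument on coset sizes'' cannot exist. The shifts $M'_\pi$ you allow---over the $2^{n-t}$ valid $(t-1)$-partitions $\pi$---span at most an $(n-t)$-dimensional subspace of the $\bigl(\binom{n}{2}-\binom{t}{2}\bigr)$-dimensional group $\F_2^{\binom{[n]}{2}\setminus K}$, so for $n>t$ this subspace is proper. Any union of its cosets gives a non-constant $\chi_1=\chi_2$ for which \emph{no} witness of your form exists. Concretely, take $t=3$, $n=4$, $K=K_3$ on $\{1,2,3\}$, $e=\{1,2\}$, $H=\varnothing$: the two valid partitions give $M'_\pi\in\{\{14,24\},\{34\}\}$, spanning an index-$2$ subgroup $S$ of $\F_2^{\{14,24,34\}}$, and the choice $\chi_1(G)=\chi_2(G)=\ind[\,14\in G \Leftrightarrow 24\in G\,]$ makes $A_1=A_2$ a coset of $S$, so $(A_1\triangle M'_\pi)\cap B_2=\varnothing$ for every $\pi$. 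Your implication ``not both the same constant $\Rightarrow$ some $M'_\pi$-witness exists'' is therefore false as stated; what is true is only that such $\chi$ yield non-agreeing families, but detecting this requires witnesses outside your class.

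The paper avoids this by indexing $\mc F_i$ by $H\subseteq\binom{[n]}{2}\setminus E(T)$ and, crucially, defining $\chi_2$ through the \emph{complement} $H\mapsto K_n\setminus E(T\cup H)$. Connectedness of the hypercube then produces $H,H'$ differing in at most one edge with $\chi_1(H)\neq\chi_2(H')$; the corresponding pair $(G,G')\in\mc F_1\times\mc F_2$ agrees only on $(T\setminus\{e\})$ together with the at most one edge in $H\triangle H'$, and any such graph is $(t-1)$-colorable. This one-edge flexibility is exactly what your $M'_\pi$-shifts lack.
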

\begin{proof}
By induction, it suffices to prove both claims in the case $C = C_{e}$. 
To check the first claim, note that if $G'_1 \in C_e(\mc F_1)$ and $G'_2 \in C_e(\mc F_2)$, then there exist $G_1 \in \mc F_1$ and $G_2 \in \mc F_2$ such that $G'_1$ and $G'_2$ agree wherever $G_1$ and $G_2$ agree (and possibly elsewhere as well).

The second claim is nearly identical to {\cite[Lemma 2.7]{EFF12}} (which was stated for an intersecting family rather than cross-intersecting families).
Fix some copy $T$ of $K_t$.
Assume $C_e(\mc F_1) = C_e(\mc F_2)$ is a $T$-conjunction. If $e$ is outside $T$, $C_e\inv$ must act trivially, and so $\mc F_1 = \mc F_2$ is the same conjunction. Otherwise, $e \in T$, and each choice of edges $H \subset \binom{[n]}{2}\sm E(T)$ extends uniquely to a graph in $\mc F_i$, for each $i$. View the set of such $H$ as a hypercube graph, with $H \sim H'$ if they differ in exactly one edge. Now consider the following two colorings of this hypercube. For $H$ in the hypercube, let $\chi_1(H)$ be given by membership of $e$ in the extension of $H$ to a member of $\mc F_1$, and let $\chi_2(H)$ be given by membership of $e$ in the extension of $K_n \setminus E(T \cup H)$ to a member of $\mc F_2$. If these two colorings $\chi_1$ and $\chi_2$ are constant and identical, then $\mc F_1$ and $\mc F_2$ are identical $T$-conjunctions. Otherwise, we can find a pair $(H, H')$ that disagree on at most one edge and satisfy $\chi_1(H) \neq \chi_2(H')$. Then the pair $(H, K_n \setminus E(T \cup H'))$ extends to $(G, G')  \in \mc F_1\times  \mc F_2$, whose agreement is contained in $K_t \sm \{e\} \cup \{\le 1\text{ other edge}\}$. This set of edges in agreement cannot possibly have chromatic number at least $t$, which completes the contradiction.
\end{proof}
\begin{corollary}\label{cor:monotone_suffices}
It suffices to prove (uniqueness) of \cref{thm:main-agreeing} in the case when $\mc F_1$ and $\mc F_2$ are monotone.
\end{corollary}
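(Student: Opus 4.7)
The plan is to use the compression operator $C_e$ from \cref{lem:shifting} to reduce an arbitrary extremal cross-$t$-chromatic-agreeing pair to a monotone extremal pair, and then pull back the resulting structure.

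Suppose $(\mc F_1, \mc F_2)$ is cross-$t$-chromatic-agreeing with $\abs{\mc F_1}\abs{\mc F_2} = 4^{\binom{n}{2} - \binom{t}{2}}$. The first step is to apply compressions $C_e$ simultaneously to both $\mc F_1$ and $\mc F_2$, for all $e \in \binom{[n]}{2}$, repeatedly. Each compression preserves the cardinality of each family (by the definition of $C_e$) and preserves the cross-$t$-chromatic-agreeing property (by the first claim of \cref{lem:shifting}). Hence the product of sizes remains equal to $4^{\binom{n}{2} - \binom{t}{2}}$ throughout the process.

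Termination: the quantity $\Phi(\mc F_1, \mc F_2) := \sum_{G \in \mc F_1} e(G) + \sum_{G \in \mc F_2} e(G)$ is a bounded nonnegative integer which strictly increases under any nontrivial application of $C_e$ (since $C_e$ only replaces graphs with strictly larger ones). So after finitely many steps we reach a pair $(\mc F_1', \mc F_2') = (C(\mc F_1), C(\mc F_2))$, where $C$ is a composition of compressions, such that $C_e(\mc F_i') = \mc F_i'$ for every $e$ and every $i \in \{1,2\}$. This exactly says that each $\mc F_i'$ is monotone (closed under edge addition).

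Now apply the assumed monotone case of (uniqueness) to $(\mc F_1', \mc F_2')$: since it is cross-$t$-chromatic-agreeing with $\abs{\mc F_1'}\abs{\mc F_2'} = 4^{\binom{n}{2} - \binom{t}{2}}$ and both families are monotone, we conclude $\mc F_1' = \mc F_2'$ is a $K_t$-conjunction. Finally, invoking the second claim of \cref{lem:shifting}, we lift this conclusion back: writing $C = C_{e_1} \circ \cdots \circ C_{e_k}$ and undoing one compression at a time, at each stage the preimage pair is also an equal $K_t$-conjunction. In particular, $\mc F_1 = \mc F_2$ is a $K_t$-conjunction, as required.

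There is no real obstacle here beyond bookkeeping: everything needed is already packaged into \cref{lem:shifting}. The only point to verify carefully is that compressions must be applied to both families with the same edge $e$ simultaneously (so that the cross-agreeing hypothesis is preserved), and that the termination argument uses the joint potential $\Phi$.
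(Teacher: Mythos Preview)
Your proof is correct and follows essentially the same approach as the paper: apply compressions $C_e$ (with the same $e$ on both families) until both are monotone, using the total edge count as a potential to guarantee termination, then invoke the monotone case of uniqueness and pull back via the second part of \cref{lem:shifting}. The only cosmetic difference is that you track the joint potential $\Phi$ while the paper tracks each family's edge count separately.
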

\begin{proof}
Whenever $C_e$ acts nontrivially on $\mc F_i$, it increases the total number of edges among graphs in $\mc F_i$ by at least 1. This quantity is uniformly upper bounded, and so after some finite number of applications $C = C_{e_1} \circ \cdots \circ C_{e_k}$ one has that $C(\mc F_1)$ and $C(\mc F_2)$ are invariant under all $C_e$, and hence monotone. By the first half of \cref{lem:shifting}, $C(\mc F_1)$ and $C(\mc F_2)$ are still $K_t$-chromatic-agreeing, so the monotone case of (uniqueness) tells us that they equal some $K_t$-conjunction. By the second half of \ref{lem:shifting}, we conclude that $\mc F_1$ and $\mc F_2$ are identical $K_t$-conjunctions as well.
\end{proof}

\subsection{Proof of \cref{thm:main-agreeing} assuming \cref{prop:construction}}
\begin{proof}[Proof (Upper bound)]
Let $(\mc F_1, \mc F_2)$ be $t$-chromatic agreeing.  Let $\nu:\F_2^N \to \R$ satisfy $\wh \nu = \mu/(2^{\binom{t}{2}}-1)$, where $\mu$ is the function given by Proposition \ref{prop:construction}. Then $\E[\nu] = \wh \nu(0) = 1$, and $m = \max_{\lambda \neq 0}|\wh \nu(\lambda)| = 1/(2^{\binom{t}{2}} - 1)$. By Lemmas \ref{lem:nu_support_condition} and \ref{lem:nu_construction_condition}, we see $\angles{f * g, \nu} = 0$. 
Applying Proposition \ref{prop:Hoffman}, we obtain $\E[f_1]\E[f_2] \le 2^{-2\binom{t}{2}}$.
\end{proof}

\begin{proof}[Proof (Uniqueness)] Taking $\mc F_1 = \mc F_2$ to be a $K_t$-conjunction achieves the upper bound. We need to show no other families may achieve the upper bound; by \cref{cor:monotone_suffices} it suffices to consider pairs $(\mc F_1, \mc F_2)$ that are monotone. By (maximal families) from Proposition \ref{prop:Hoffman} their indicator functions $\wh f_1, \wh f_2$ are supported on $\lambda$ where $|\wh \nu(\lambda)|$ is maximized, and they satisfy $\E[f_1] = \E[f_2]$.
Thus $\E[f_1]^2 = \E[f_2]^2 = \E[f_1]\E[f_2] = 2^{-2\binom{t}{2}}$. Since $\wh f_1, \wh f_2$ are supported only on $\lambda$ with $|\lambda| \le \binom{t}{2}$, we are in a position to apply Lemma \ref{lem:Friedgut}. This tells us that $f_1$ depends only on some set of coordinates $T_1$ and $f_2$ depends only on some set of coordinates $T_2$, where $|T_1| = |T_2| = \binom{t}{2}$. It is easy to choose $G_1 \in \mc F_1$ and $G_2 \in \mc F_2$ so that $G_1$ and $G_2$ disagree on all edges outside $T_1 \cap T_2$, so $T_1 \cap T_2$ is a graph on $\binom{t}{2}$ edges with chromatic number at least $t$. This can only happen when $T_1 = T_2 = T \cong K_t$, and now it is easy to see that to be cross-$t$-chromatic-agreeing $\mc F_1$ and $\mc F_2$ must be identical $T$-conjunctions.
\end{proof}

\begin{proof}[Proof (Stability)]
Applying (stability) from \cref{prop:Hoffman}, we see $\sum_{|S|> \binom{t}{2}} \wh f_i(S)^2 \le C\epsilon$ for some constant $C$ depending only on $t$ and $\delta$. Applying \cref{thm:K-S}, we see that each $f_i$ agrees with some $g_i$ on all but a $C'\epsilon$-fraction of inputs, where $g_i$ is a function only of some set $T_i$ of coordinates with $|T_i|\le K_0$. Let $V$ be the set of vertices incident to an edge of $E(T_1)$ or $E(T_2)$, so $|V| \le 4K_0$. Then $g_1$ and $g_2$ may be viewed as functions $\overline{g_1}$, $\overline{g_2}$ on the set of graphs on $V$, and in particular this restriction does not change expectation. Assuming for the sake of contradiction that $\overline {g_1}, \overline{g_2}$ are not identical $K_t$-conjunctions, we see by (uniqueness) that $\E[g_1]\E[g_2] < 2^{-2\binom{t}{2}}$. Since there are a finite number of non-maximal families on $4K_0$ vertices, we may write
\begin{equation*}
    \E[g_1]\E[g_2] \le \max_{\substack{h_1, h_2 \text{ nonmaximal}\\\text{ on $\le 4K_0$ vertices}}} \Big[\E[h_1]\E[h_2]\Big] =: 2^{-2\binom{t}{2}} - c, \quad\text{for some $c > 0$}.
\end{equation*}
If $\epsilon$ is sufficiently small that $2C'\epsilon+\epsilon < c$, then we have 
\begin{equation*}
    \E[f_1]\E[f_2] \le \E[g_1]\E[g_2]+2C'\epsilon \le 2^{-2\binom{t}{2}} - c + 2C'\epsilon < 2^{-2\binom{t}{2}} - \epsilon
\end{equation*}
which is a contradiction. Thus $\overline{g_1} = \overline{g_2}$ is the indicator function of some $K_t$-conjunction, and by extension so are $ f_1$ and $ f_2$.
\end{proof}

\section{How to verify the dual linear constraints}\label{sec:fourier}
In order to prove \cref{prop:construction}, once we have specified choices for $c_H$ we need to be able to effectively bound $\mu(G) = (-1)^{e(G)}\sum c_H \P[G_q \cong H]$ for all graphs $G$. (Recall from Definition \ref{def:G_q} that $G_q$ is a random subgraph of $G$ given by taking a random $q$-coloring of $V(G)$ and deleting all monochromatic edges, and then all isolated vertices.) The nature of $\mu$ makes this easy to do for sufficiently large $G$: by choosing $c_H$ to be supported on graphs with a bounded number of edges, it not too hard see that $\mu(G)$ must decay with $e(G)$. The proof is then a balance between optimizing control on the decay of $\mu(G)$ and doing casework on small graphs for which the generic bounds are not strong enough. The casework can thankfully be offloaded to a computer but computation time quickly becomes a limiting factor, as the number of graphs on $n$ vertices increases quickly with $n$. Our main tool to reduce to a finite computation is Proposition \ref{prop:reduction_to_bounded}.  We will begin with the derivation, and conclude with the statement of the bound. The derivation proceeds first by obtaining a result for connected graphs, and then bootstrapping that to all graphs.

The main idea of the proof is that if the coefficients $c_H$ are supported only on graphs of a bounded size, we can bound $\mu(G)$ by a contribution from each of its induced subgraphs on a bounded number of vertices. For a graph $G$ on $n$ labeled vertices, let $\kappa(G)$ be the number of connected components of $G$. We begin with a lemma.

\begin{lemma}\label{lem:restricted_colorings}
Let $q$ be a positive integer, and let $G'' \subseteq G' \subseteq G$ be labeled graphs, where the containments $V(G'') \subseteq V(G') \subseteq V(G)$ may be strict. Furthermore, assume $G$ is connected. Then 
\begin{equation*}
    \#\Big\{\varphi' \in [q]^{V(G')}: G'_{\varphi'} = G''\Big\} \ge q^{\kappa(G') - 1}\cdot \#\Big\{\varphi \in [q]^{V(G)}: G_{\varphi'} = G''\Big\}.
\end{equation*}

\end{lemma}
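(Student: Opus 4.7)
The plan is to induct on $|V(G)|$. Let $|A| := \#\{\varphi \in [q]^{V(G)}: G_\varphi = G''\}$ and $|B| := \#\{\varphi' \in [q]^{V(G')}: G'_{\varphi'} = G''\}$, so the target reads $|B| \ge q^{\kappa(G')-1} |A|$. The base case $G = G'$ is immediate: the two sides count the same set, and $\kappa(G') = \kappa(G) = 1$ makes the factor $q^{0} = 1$.

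For the inductive step I would identify a set $S \subseteq V(G)$ of vertices forced to share a color under any $\varphi$ with $G_\varphi = G''$, and contract $S$ to a single vertex. If $V(G) \supsetneq V(G')$, pick $v \in V(G) \setminus V(G')$ and set $S := \{v\} \cup N_G(v)$: since $v \notin V(G'')$, the vertex $v$ is isolated in $G_\varphi$, forcing $\varphi$ to be constant on $S$. Otherwise $V(G) = V(G')$ but $G \ne G'$, so pick some $e = uv \in E(G) \setminus E(G')$; since $e \notin E(G'')$, we have $\varphi(u) = \varphi(v)$ and we set $S := \{u,v\}$. Contracting $S$ yields $(G^*, G'^*, G''^*)$ with $G^*$ connected and $|V(G^*)| < |V(G)|$. (If some edge of $G''$ lies inside $S$, the contraction creates a self-loop in $G''^*$; one then checks directly that $|A| = 0$ and the inequality is trivial.)

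Let $k$ denote the number of connected components of $G'$ intersecting $S \cap V(G')$. When $k = 0$, we have $G'^* = G'$ and induction on $(G^*, G', G'')$ yields the result directly. For $k \ge 1$, three observations combine to close the step: (i) $|A^*| = |A|$, via the bijection $\varphi \leftrightarrow \varphi^*$ well-defined because $\varphi$ is constant on $S$; (ii) $\kappa(G'^*) = \kappa(G') - (k-1)$, since contraction merges exactly $k$ components of $G'$ into one; and (iii) $|B| \ge q^{k-1} |B^*|$. The inductive hypothesis $|B^*| \ge q^{\kappa(G'^*)-1}|A^*|$ then gives
\[
|B| \ge q^{k-1} \cdot q^{\kappa(G'^*)-1} |A^*| = q^{k-1+\kappa(G')-k} |A| = q^{\kappa(G')-1} |A|.
\]

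The hard part will be (iii). The key probabilistic observation: under the uniform distribution on $B$, the restrictions $\varphi'|_{C_i}$ to the connected components $C_i$ of $G'$ are independent (the constraint $G'_{\varphi'} = G''$ factors across components), and within each $C_i$ the distribution is invariant under permuting colors (bichromaticity is preserved by any $\sigma \in S_q$). Setting $P_i := \Pr[\varphi' \text{ constant on } S \cap C_i \mid B] \le 1$, color symmetry gives $\Pr[\varphi' \equiv c \text{ on } S \cap C_i \mid B] = P_i/q$, so independence across components yields
\[
\Pr[\varphi' \text{ constant on } S \mid B] = \sum_{c \in [q]} \prod_{i : S \cap C_i \neq \emptyset} \tfrac{P_i}{q} = q^{1-k} \prod_i P_i \le q^{1-k}.
\]
Since $B^*$ corresponds bijectively to the $\varphi' \in B$ constant on $S \cap V(G')$, this gives $|B^*| \le q^{1-k} |B|$, which is (iii).
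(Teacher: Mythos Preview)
Your inductive contraction approach differs from the paper's proof, which is direct: the restriction map $\rho \colon \Phi_G \to \Phi_{G'}$ is injective (every edge of $G$ outside $G'$ must be monochromatic, and $G$ is connected, so $\varphi|_{V(G')}$ determines $\varphi$), and independently cyclically shifting the colors on each component of $G'$ gives a free action of $(\Z/q)^{\kappa(G')}$ on $\Phi_{G'}$ whose orbits the image of $\rho$ meets in at most $q$ points each. This yields $|\Phi_G| \le q^{1-\kappa(G')} |\Phi_{G'}|$ in one step, avoiding contractions entirely.

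Your plan is essentially sound but has a real gap at the last line: the claimed bijection between $B^*$ and $\{\varphi' \in B : \varphi' \text{ constant on } S\cap V(G')\}$ can fail. Take $G'$ to be the path with edges $uw,vw$, take $G''$ to be the single edge $uw$, and $S=\{u,v\}$: no $\varphi'\in B$ is constant on $S$ (that would force $\varphi'(u)=\varphi'(v)=\varphi'(w)\ne\varphi'(u)$), yet after identifying $u\sim v$ the graphs $G'^*$ and $G''^*$ coincide as a single edge and $|B^*|=q(q-1)$. The issue is that contraction merges two edges $u_1 w, u_2 w \in E(G')$ into one, and if exactly one of them lies in $G''$ then lifts from $B^*$ do not land in $B$. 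Your self-loop parenthetical does not cover this case.

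The fix is easy once seen: assume $|A|>0$ at the outset (otherwise the inequality is trivial). Any $\varphi\in A$ is constant on $S$, which forces that for every vertex $w$ either all edges from $S$ to $w$ in $G'$ lie in $G''$ or none do, and that no edge of $G''$ lies inside $S$. Under this ``all-or-none'' condition the bijections in both (i) and (iii) genuinely hold and your induction closes.
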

This lemma presents the somewhat counterintuitive fact that when the ambient graph $G$ is connected, a subgraph $G'$ may have more colorings which yield $G''$ than $G$ itself does. 
\begin{proof}
Let $\Phi_{G} := \{\varphi \in [q]^{V(G)}: G'_{\varphi'} = G''\}$, and $\Phi_{G'} := \{\varphi' \in [q]^{V(G')}: G'_{\varphi'} = G''\}$. Restriction of colorings from $V(G)$ to $V(G')$ gives a map $\rho: \Phi_{G} \to \Phi_{G'}$. First, we claim $\rho$ is injective. Let $\varphi \in \Phi_G$, so $G_\varphi = G'' \subset G'$. Then by definition, all edges in $E(G) \sm E(G')$ are monochromatic in $\varphi$. Since $G$ is connected, this means $\varphi$ is uniquely determined by its colors on $V(G')$, which proves the claim.

In fact, a more careful application of the same argument shows that the image of $\Phi_{G}$ under restriction $\rho$ is in fact much smaller than $\Phi_{G'}$. Let $\varphi' \in \Phi_{G'}$. Take some connected component of $G'$ and permute its colors via a cyclic shift, so $\varphi'(v) \mapsto \varphi'(v) + 1 \pmod{q}$ for $v$ in this component. After this operation, we still have $G'_{\varphi'} = G''$. This motivates the following definition. For $\varphi', \varphi'' \in \Phi_{G'}$, say $\varphi' \sim \varphi''$ if one may obtain $\varphi''$ by starting from the coloring $\varphi'$ and applying a sequence of cyclic shifts to the colors of connected components in $G'$. This is a well-defined equivalence relation, and each equivalence class has size $q^{\kappa(G')}$. We claim that the image $\rho(\Phi_G)$ contains at most $q$ elements of this equivalence class. As before, for any coloring $\varphi \in \Phi_G$, the edges in $E(G) \sm E(G')$ must be monochromatic. Since $G$ is connected, this means that $\varphi$ is uniquely determined once we specify the equivalence class of its restriction to $V(G')$ and the color of a single vertex $v_0$. Since there are $q$ choices for the color of $v_0$, the claim follows.

Concluding, $q|\Phi_{G'}| \le q^{\kappa(G')}|\Phi_{G}|$, which completes the proof.
\end{proof}
For a graph $G$ on $n$ labeled vertices, let $\binom{G}{n_0}$ be the set of labeled subgraphs of $G$ induced by all choices of $n_0$ vertices from $V(G)$.
\begin{lemma}\label{lem:double_count} Fix $q > 0$,  a list $\{H\}$ of unlabeled graphs on at most $n_0$ vertices and $\{c_H\}$ a list of coefficients. Then for any connected $G$ on $n \ge n_0$ labeled vertices,
\begin{equation}\label{eqn:double_count}
	\sum_H |c_H|\cdot \P[G_q \cong H] \leq 
	\sum_{G' \in \binom{G}{n_0}}\frac{1}{q^{\kappa(G') - 1}}\sum_{H} \frac{|c_{H}|}
         {q^{n-n_0}\binom{n-v(H)}{n_0-v(H)}}\P[ G'_q \cong H].
	\end{equation}
\end{lemma}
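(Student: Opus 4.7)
The plan is to expand $\P[G_q \cong H]$ as a weighted count of vertex colorings of $G$, apply \cref{lem:restricted_colorings} to each labeled copy of $H$ inside $G$, and then double-count by swapping the order of summation.

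First, I would introduce the notation $N(G,G'') := \#\{\varphi \in [q]^{V(G)} : G_\varphi = G''\}$ and write
\begin{equation*}
    \P[G_q \cong H] = \frac{1}{q^n} \sum_{\substack{G'' \subset G \\ G'' \cong H}} N(G, G''),
\end{equation*}
where the sum is over labeled subgraphs $G''$ of $G$ isomorphic to $H$ (with no isolated vertices, as enforced by the definition of $G_\varphi$). Next, for each such $G''$, I would observe that the number of induced subgraphs $G' \in \binom{G}{n_0}$ with $V(G'') \subset V(G')$ is exactly $\binom{n - v(H)}{n_0 - v(H)}$, so I can write $N(G,G'')$ as the average of itself over these $G'$, and then apply \cref{lem:restricted_colorings} (with the given $G'' \subset G' \subset G$, using connectedness of $G$) to bound each term by $q^{-(\kappa(G')-1)} N(G', G'')$.

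This yields
\begin{equation*}
    \P[G_q \cong H] \le \frac{1}{q^n \binom{n - v(H)}{n_0 - v(H)}} \sum_{\substack{G'' \subset G \\ G'' \cong H}} \sum_{\substack{G' \in \binom{G}{n_0} \\ V(G'') \subset V(G')}} \frac{N(G', G'')}{q^{\kappa(G')-1}}.
\end{equation*}
Swapping the order of summation, the inner sum over $G'' \subset G'$ with $G'' \cong H$ collapses via $\sum_{G'' \subset G', G'' \cong H} N(G', G'') = q^{n_0}\P[G'_q \cong H]$, producing a $q^{n_0}/q^n = q^{-(n-n_0)}$ factor. Multiplying by $|c_H|$ and summing over $H$ gives the claimed inequality.

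The argument is essentially bookkeeping built on top of \cref{lem:restricted_colorings}, so there is no serious obstacle; the only subtle point is justifying the averaging step, which requires verifying that every labeled copy $G''$ of $H$ in $G$ is contained in precisely $\binom{n-v(H)}{n_0-v(H)}$ induced subgraphs $G' \in \binom{G}{n_0}$, and that \cref{lem:restricted_colorings} applies uniformly to each of them (which it does, since $G$ is connected and we only use the bound $N(G,G'') \le q^{-(\kappa(G')-1)} N(G',G'')$ without needing any relationship between the $\kappa(G')$'s).
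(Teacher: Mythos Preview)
Your proposal is correct and follows essentially the same argument as the paper: expand $\P[G_q\cong H]$ as a sum over labeled copies $G''\subset G$, divide each term by the number $\binom{n-v(H)}{n_0-v(H)}$ of induced $G'\in\binom{G}{n_0}$ containing $V(G'')$, apply \cref{lem:restricted_colorings} termwise, and then regroup by $G'$ to recover $q^{n_0}\P[G'_q\cong H]$. The only cosmetic difference is that the paper writes the double-counting as an equality with the multiplicity in the denominator before applying \cref{lem:restricted_colorings}, whereas you describe it as ``averaging $N(G,G'')$ over the admissible $G'$''; these are the same manipulation.
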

Although complicated, this bound is appealing for three reasons. First, when $\mu(G) = \sum_H c_H\cdot \P[G_\varphi \cong H]$, the LHS is an upper bound for $|\mu(G)|$ by the triangle inequality. Second, plugging in even the very crude bound $\sup |c_H|/q^{n-n_0}$ for the inner summation, this gives us the bound $|\mu(G)| \le \sup |c_H|\cdot n^{n_0}/q^{n-n_0}$ which decays exponentially in $n$. Third, each summand of the outer summation is a function only of $n$ and some graph $G'$ on $n_0$ vertices. Thus given $n$, we can obtain a bound on this quantity through direct computation, iterating through all colorings of all graphs on $n_0$ vertices. In fact, this bound will decay for $n$ sufficiently large, which will allow us to remove the $n$ dependence entirely and obtain a good uniform bound on $\mu$ for all graphs on $n > n_0$ vertices. 
\begin{proof}[Proof of Lemma \ref{lem:double_count}]
We employ a double-counting argument, summing Lemma \ref{lem:restricted_colorings} over all induced subgraphs $G' \subset G$ on $n_0$ vertices. For fixed $H$ with $c_H \neq 0$, we have
\begin{align*}
    |c_H|\cdot \P[G_q \cong H] &= \sum_{G \supseteq G'' \cong H} \frac{|c_{H}|}{q^n} \cdot \#\Big\{\varphi \in [q]^{V(G)}: G_{\varphi} = G''\Big\}\\
    &= \sum_{G' \in \binom{G}{n_0}}\sum_{G' \supseteq G'' \cong H} \frac{|c_{H}|}{q^n}\cdot
    \frac{\#\big\{\varphi \in [q]^{V(G)}: G_{\varphi} = G''\big\}}
         {\#\big\{G' \in \binom{G}{n_0}: G'' \subseteq G'\big\}}\\
    &\le \sum_{G' \in \binom{G}{n_0}}\sum_{G' \supseteq G'' \cong H} \frac{|c_{H}|}{q^n}\cdot
    \frac{q^{1- \kappa(G')}\cdot\#\big\{\varphi' \in [q]^{V(G')}: G'_{\varphi'} = G''\big\}}
         {\binom{n-v(G'')}{n_0-v(G'')}}\\
    &= \sum_{G' \in \binom{G}{n_0}}\frac{1}{q^{\kappa(G') - 1}}\frac{|c_{H}|}
         {q^{n-n_0}\binom{n-v(H)}{n_0-v(H)}}\P[ G'_q \cong H]
\end{align*}
Sum this inequality over all $H$ and we obtain the lemma.
\end{proof}

Now we eliminate the dependence of Lemma \ref{lem:double_count} on $n$. Take (\ref{eqn:double_count}) and upper bound the contents of the outer summation by taking its maximum over $G' \subset K_{n_0}$. The summation turns into a multiplicative factor of $\binom{n}{n_0}$, and so we may upper bound the RHS by
\begin{equation}\label{eqn:penultimate_mu_bound}
\max_{G' \subset K_{n_0}}\left[\frac{1}{q^{k(G') - 1}} \sum_{H} |c_H|\cdot \P[ G'_q \cong H]\cdot \frac{\binom{n}{n_0}}{q^{n-n_0} \binom{n - v(H)}{n_0 - v(H)}}\right].
\end{equation}
The final term is complicated, but decays with $n$, and can be replaced by a uniform upper bound. Let
\begin{equation}\label{eqn:DC_def}
	DC_{q, n_0}(x) := \max_{\substack{n \in \Z\\n > n_0}}\frac{\binom{n}{n_0}}{q^{n-n_0} \binom{n - x}{n_0 - x}}.
\end{equation}
The letters $DC$ stand for ``double counting.'' This expression changes by a multiplicative factor of $(n+1)/(q(n-x))$ when $n$ increases to $n+1$. So for $q \ge 2$ it is already decreasing for $n \ge 2x+1$, and it suffices to maximize for $n$ up to this bound. Below is a table of the values of $DC$ which covers our two constructions.  For all of these values, the maximum in (\ref{eqn:DC_def}) is achieved when $n = 10$, except for $DC_{3,9}(8)$ which is maximized at $n = 11$. 
\begin{figure}[H]
	\centering
	\begin{tabular}{ccccccccccc}
		$x$ & $0$ & $1$ & $2$ & $3$ & $4$ & $5$ & $6$ & $7$ & $8$   \\
		\midrule
		$DC_{2,9}(x)$ & $1/2$ & -- & $5/8$ & $5/7$ & $5/6$ & $1$ & -- & -- & --  \\
		$DC_{3,9}(x)$ & $1/3$ & -- & $5/12$ & $10/21$ & $5/9$ & $2/3$ & $5/6$ & $10/9$ & $55/27$ \\
	\end{tabular}
\end{figure}
Plugging DC into (\ref{eqn:penultimate_mu_bound}), we obtain a uniform bound on $|\mu(G)|$ for all connected $G$ with more than $n_0$ vertices.
\begin{lemma}\label{lem:reduction_to_bounded_connected}
Fix $q > 0$,  a list $\{H\}$ of unlabeled graphs on at most $n_0$ vertices and $\{c_H\}$ a list of coefficients. Then for any connected $G$ on $n > n_0$ labeled vertices, we have
\begin{equation}
	\sum_H |c_H|\cdot \P[G_q \cong H] \le \max_{G' \subset K_{n_0}}\left[\frac{1}{q^{k(G') - 1}} \sum_{H } |c_H|\cdot \P [G'_q \cong H]\cdot DC_{q, n_0}(v(H))\right].
\end{equation}
\end{lemma}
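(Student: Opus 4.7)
The plan is to derive this lemma in a few short steps from \cref{lem:double_count} combined with the definition of $DC_{q,n_0}$ in (\ref{eqn:DC_def}).

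First, I would apply \cref{lem:double_count} directly to bound the left-hand side by
\begin{equation*}
\sum_{G' \in \binom{G}{n_0}}\frac{1}{q^{\kappa(G') - 1}}\sum_{H} \frac{|c_{H}|\cdot \P[G'_q \cong H]}{q^{n-n_0}\binom{n-v(H)}{n_0-v(H)}}.
\end{equation*}
This is already very close in form to what we want; the remaining work is to absorb the outer sum and eliminate the explicit $n$-dependence.

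Next, I would eliminate the outer sum over induced subgraphs. The quantity inside the outer sum depends on $G'$ only through $\kappa(G')$ and the distribution of $G'_q$, both of which are graph isomorphism invariants. Hence each of the $\binom{n}{n_0}$ summands can be uniformly bounded by the maximum over isomorphism types $G' \subseteq K_{n_0}$, and the outer sum collapses into a multiplicative factor of $\binom{n}{n_0}$. This yields exactly display (\ref{eqn:penultimate_mu_bound}), which contains a trailing $n$-dependent factor $\binom{n}{n_0}/(q^{n-n_0}\binom{n-v(H)}{n_0-v(H)})$.

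Finally, since $|c_H| \cdot \P[G'_q \cong H] \ge 0$, I can replace that factor term-by-term inside the sum by its supremum over integers $n > n_0$, which is $DC_{q,n_0}(v(H))$ by definition. This gives the claimed inequality. There is no serious obstacle in the argument; the one small thing to verify is that the supremum defining $DC_{q,n_0}$ is finite and attained, which follows from the observation already noted in the surrounding discussion that the defining ratio changes by a factor of $(n+1)/(q(n-v(H)))$ as $n$ increases by one, hence is eventually decreasing in $n$ once $n \ge 2v(H)+1$ (for $q \ge 2$), so the maximum occurs at some bounded value of $n$.
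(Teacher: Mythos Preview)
Your proposal is correct and follows essentially the same route as the paper: apply \cref{lem:double_count}, replace each term of the outer sum by its maximum over $G' \subseteq K_{n_0}$ to pick up the factor $\binom{n}{n_0}$ (yielding (\ref{eqn:penultimate_mu_bound})), and then bound the remaining $n$-dependent factor by $DC_{q,n_0}(v(H))$. The paper presents exactly this derivation in the discussion immediately preceding the lemma.
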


Now we extend this to a bound for all $G$. Let $G$ be a graph with at least two connected components, and let $u, v \in V(G)$ be a pair vertices in distinct connected components. Our first guess would be to replace $G$ by the graph obtained by identifying $u$ and $v$, which we denote $G/(u\sim v)$, and hope that $\sum_H |c_H|\cdot \P[G_\varphi \cong H]$ increases. This is nearly the case.
\begin{lemma}\label{lem:contraction_bound}
Fix $q > 0$,  a list $\{H\}$ of unlabeled graphs on at most $n_0$ vertices and $\{c_H\}$ a list of coefficients. Then for any $G$ on $n$ labeled vertices with at least two connected components, $u$ and $v$ a pair of disconnected vertices in $G$ (i.e., there is no path in $G$ from $u$ to $v$), and $G' = G/(u \sim v)$, we have
\begin{equation*}
    \sum_H |c_H|\cdot \P[G_q \cong H] \le \sum_H c'_H\cdot \P[G'_q \cong H], 
\end{equation*}
where $c'_H$ is the maximum value of $|c_{H'}|$ over all graphs $H'$ which may be transformed to $H$ by contracting at most one pair of disconnected vertices.
\end{lemma}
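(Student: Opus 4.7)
The plan is to build an explicit bijection between $[q]^{V(G)}$ and $[q]^{V(G')}\times [q]$ under which the subgraph $G_\varphi$ either equals $G'_{\varphi'}$ or is obtained from it by splitting the contracted vertex back into a pair of disconnected vertices. Because $c'_H$ was defined precisely to dominate $|c_{H'}|$ over all such splittings $H'$, this will give a pointwise inequality $|c_{G_\varphi}| \le c'_{G'_{\varphi'}}$, and summing will yield the lemma. To set up the bijection, let $w$ denote the vertex of $G'$ obtained by identifying $u$ with $v$, and let $C_v$ be the connected component of $v$ in $G$, which by hypothesis does not meet $u$. Given $\varphi \in [q]^{V(G)}$, set $s := \varphi(v)-\varphi(u) \pmod q$ and define $\varphi' \in [q]^{V(G')}$ by $\varphi'(w) := \varphi(u)$, $\varphi'(x) := \varphi(x)$ for $x \notin C_v$, and $\varphi'(x) := \varphi(x)-s$ for $x \in C_v\setminus\{v\}$. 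Inverting this explicitly gives the claimed bijection.

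The first key observation is that this recoloring preserves bichromaticity on every edge: edges with neither endpoint in $C_v$ are unaffected; edges inside $C_v$ not incident to $v$ have both endpoints shifted by the same $s$; and an edge $\{v,y\}$ with $y \in C_v$ becomes $\{w,y\}$ with the same color difference, since $\varphi(v)-\varphi(y) = \varphi'(w)-\varphi'(y)$. Hence the edge set of $G'_{\varphi'}$ (before deleting isolated vertices) is the image of the edge set of $G_\varphi$ under the identification of $u$ with $v$. The second step tracks the deletion of isolated vertices. If both $u$ and $v$ are isolated in $G_\varphi$, then every edge of $G'$ at $w$ is monochromatic in $\varphi'$, so $w$ is deleted and the two graphs agree. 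If exactly one of $u,v$ is isolated, then $G'_{\varphi'} \cong G_\varphi$ via the obvious renaming of the surviving vertex to $w$. If neither is isolated, then $u$ and $v$ lie in distinct components of $G_\varphi$ (inherited from $G$), and $G'_{\varphi'}$ arises from $G_\varphi$ by contracting this disconnected pair into $w$. In every case $G_\varphi$ is among the graphs $H'$ that contract to $G'_{\varphi'}$ via at most one pair of disconnected vertices, so by definition $|c_{G_\varphi}| \le c'_{G'_{\varphi'}}$.

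Summing this pointwise bound across the bijection and dividing by $q^{|V(G)|} = q \cdot q^{|V(G')|}$ yields
\[
\sum_H |c_H|\,\P[G_q \cong H] = \frac{1}{q^{|V(G)|}}\sum_\varphi |c_{G_\varphi}| \le \frac{1}{q^{|V(G')|}}\sum_{\varphi'} c'_{G'_{\varphi'}} = \sum_H c'_H\,\P[G'_q\cong H].
\]
The main obstacle is setting up the bijection correctly: the shift $s$ is essential, since the naive correspondence with $\varphi'(w)=\varphi(u)$ alone would only handle colorings satisfying $\varphi(u)=\varphi(v)$. Once the shift is in place, the edge-preservation check and the isolated-vertex case analysis are routine.
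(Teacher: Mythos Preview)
Your proof is correct and follows essentially the same approach as the paper. The only cosmetic difference is that where you construct an explicit bijection $[q]^{V(G)}\to [q]^{V(G')}\times[q]$ via the shift parameter $s$, the paper instead observes that, because $u$ and $v$ lie in different components, $G_q$ has the same distribution as $G_\varphi$ conditioned on $\varphi(u)=\varphi(v)$, and such colorings descend directly to colorings of $G'$; your shift is precisely what makes that distributional claim explicit.
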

\begin{proof}
Let $\varphi$ be a random $q$-coloring of $V(G)$ distributed uniformly on colorings satisfying $\varphi(u) = \varphi(v)$. Since $u$ and $v$ are in distinct connected components, it is easy to see that $G_q$ and $G_{\varphi}$ have the same distribution. Any fixed $\varphi_0$ satisfying $\varphi_0(u) = \varphi_0(v)$ descends to a well-defined coloring $\varphi_1$ on $G' = G/(u \sim v)$. Furthermore, we may get from $G_{\varphi_0}$ to $G'_{\varphi_1}$ by identifying at most one pair of disconnected vertices. Consequently, the following inequality holds term by term, equating terms under this map $\varphi_0 \mapsto \varphi_1$.
\begin{align*}
    \sum_H |c_H|\cdot \P[G_q \cong H] &= \frac{1}{q^{n-1}}\sum_{\substack{\varphi_0  \in [q]^{V(G)}\\ \varphi_0(u) = \varphi_0(v)}} |c_{G_{\varphi_0}}|\\
    &\le \frac{1}{q^{n-1}}\sum_{\varphi_1  \in [q]^{V(G')}} {c}'_{G'_{\varphi_1}}\\
    &= \sum_H {c'_H}\cdot \P[G'_q \cong H].
\end{align*}\end{proof}
Iteratively applying Lemma \ref{lem:contraction_bound} until $G$ is connected, and then applying Lemma \ref{lem:reduction_to_bounded_connected}, we obtain the following result, which allows us to upper bound $\mu$ by a finite computation.
\begin{proposition}\label{prop:reduction_to_bounded}
Fix $q > 0$, a list $\{H\}$ of unlabeled graphs on at most $n_0$ vertices and $\{c_H\}$ a list of coefficients. Then for any $G$ on $n > n_0$ labeled vertices, we have
\begin{equation}\label{eqn:reduction_to_bounded}
	\sum_H |c_H|\cdot \P[G_q \cong H] \le \max_{G' \subset K_{n_0}}\left[\frac{1}{q^{k(G') - 1}} \sum_{H} \widetilde{c_H}\cdot \P[G'_q \cong H]\cdot DC_{q, n_0}(v(H))\right],
\end{equation}
where $\widetilde{c_H}$ is the maximum value of $|c_{H'}|$ over all graphs $H'$ which may be transformed to $H$ by repeatedly identifying pairs of disconnected vertices.
\end{proposition}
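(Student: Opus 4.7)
The plan is to chain the two lemmas just proved: first use Lemma \ref{lem:contraction_bound} to reduce an arbitrary $G$ to a connected graph while progressively updating the coefficients, and then apply Lemma \ref{lem:reduction_to_bounded_connected} to the resulting connected graph. This will directly yield the claimed inequality with $\widetilde{c_H}$ in place of $|c_H|$.

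For Phase 1, I would iterate the following loop: while the current graph (initially $G$) has at least two connected components, pick any pair $u, v$ of vertices in distinct components and apply Lemma \ref{lem:contraction_bound} to pass to the contracted graph $G/(u \sim v)$ with updated coefficients. After $\kappa(G) - 1$ such steps the graph becomes a connected graph $\widetilde G$ on $n - \kappa(G) + 1$ vertices, and the running inequality chains:
$$\sum_H |c_H|\cdot \P[G_q \cong H] \le \sum_H \widetilde{c_H}\cdot \P[\widetilde G_q \cong H].$$
At each step the updated coefficient of $H$ is the maximum of the previous coefficients $|c_{H'}|$ over graphs $H'$ that reduce to $H$ via one disconnected-vertex identification. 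Composing these maxima over the full chain of contractions, the final coefficient of $H$ is the maximum of $|c_{H'}|$ over all $H'$ that transform to $H$ by repeatedly identifying pairs of disconnected vertices, which matches the definition of $\widetilde{c_H}$ in the statement.

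For Phase 2, I apply Lemma \ref{lem:reduction_to_bounded_connected} directly to the connected graph $\widetilde G$ with the coefficients $\widetilde{c_H}$, producing
$$\sum_H \widetilde{c_H}\cdot \P[\widetilde G_q \cong H] \le \max_{G' \subset K_{n_0}}\left[\frac{1}{q^{k(G') - 1}} \sum_{H} \widetilde{c_H}\cdot \P[G'_q \cong H]\cdot DC_{q, n_0}(v(H))\right].$$
Combined with the Phase 1 chain this is exactly inequality (\ref{eqn:reduction_to_bounded}).

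The only non-trivial bookkeeping step, which is not really an obstacle, is verifying that the nested maxima produced by iterating Lemma \ref{lem:contraction_bound} collapse cleanly into the single expression $\widetilde{c_H}$ described in the statement; this follows from a short induction on $\kappa(G)$, using that the relation ``$H'$ contracts to $H$ by a sequence of disconnected-vertex identifications'' is transitive and that taking a maximum of maxima yields the overall maximum. Beyond this, no new estimates are required; all the analytic work has been done in the two component lemmas.
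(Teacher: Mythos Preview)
Your proposal is correct and matches the paper's own argument essentially verbatim: the paper's proof is the single sentence ``Iteratively applying Lemma~\ref{lem:contraction_bound} until $G$ is connected, and then applying Lemma~\ref{lem:reduction_to_bounded_connected}, we obtain the following result,'' and your two phases spell this out with the requisite coefficient bookkeeping. Your observation that the nested maxima collapse to $\widetilde{c_H}$ by transitivity of the contraction relation is exactly the detail the paper leaves implicit.
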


\section{Selecting a feasible dual solution}\label{sec:constructions}
We now prove Proposition \ref{prop:construction} by computer verification.
Computations were scripted in Python 3.6.5 using integer arithmetic. We use the list of all graphs up to nine vertices, taken from \cite{McK}. Our code is attached as ancillary files to the arXiv version of this article.
\subsection{The case $t = 3$} The choices of $c_H$ are listed in \cref{tab:k3}. It turns out that in order to satisfy the conclusions of \cref{prop:construction}, the values of $c_H$ for $H \subseteq K_3$ are uniquely determined. The other coefficients were chosen with some flexibility. 

\begin{table}[H]
	\centering
	\begin{tabular}{cccccccccc}
		$H$ & $\varnothing$ & \tikzGraph{.3}{\tikzE} & \tikzGraph{.3}{\tikzPtwo} & \tikzGraph{.3}{\tikzEE} & \tikzGraph{.1732}{\tikzSthree} & \tikzGraph{.3}{\tikzPthree} & \tikzGraph{.1732}{\tikzSfour} & \tikzGraph{.3}{\tikzCfour} & \tikzGraph{.3}{\tikzPfour} \\
	    \midrule
		$c_H$ & $7$ & $-5$ & $-1$ & $1.7$ & $3$ & $0.3$ & $-0.2$ & $-3.7$ & $-0.75$\\
	\end{tabular}
	\caption{Coefficients for $t = 3$.}\label{tab:k3}
\end{table}
With this choice of $c_H$, we verify Proposition \ref{prop:construction} for $t = 3$.
\begin{lemma} With $c_H$ chosen as in \cref{tab:k3}, and $\mu(G) = (-1)^{e(G)}\sum_{H} c_H \P[G_q \cong H]$, one has
\begin{enumerate}
    \item $\mu(0) = 7$.
    \item $|\mu(G)| \le 1$ whenever $G$ has at most $3$ edges.
    \item $|\mu(G)| \le 0.9875$ whenever $G$ has more than $3$ edges.
\end{enumerate}
\end{lemma}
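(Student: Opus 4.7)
The plan is a finite computer verification. Part (1) is immediate: when $G = \varnothing$, the random graph $G_q$ equals $\varnothing$ deterministically (after deleting isolated vertices), so $\mathbb{P}[G_q \cong H] = \mathbf{1}[H = \varnothing]$ and $\mu(\varnothing) = c_\varnothing = 7$.

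For parts (2) and (3), I use two observations to reduce to a finite check. First, $\mu(G)$ is isomorphism-invariant and depends on $G$ only through its non-isolated part, since $G_\varphi$ deletes isolated vertices by definition. Second, Proposition~\ref{prop:reduction_to_bounded} controls $|\mu(G)|$ uniformly for $|V(G)| > 9$. So we may assume $G$ has no isolated vertices and split on $|V(G)|$.

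In Case~A ($|V(G)| \leq 9$), I iterate over all isomorphism classes of graphs on at most $9$ vertices using McKay's catalog~\cite{McK}; for each $G$, compute $\mu(G)$ exactly by enumerating the $2^{|V(G)|}$ two-colorings $\varphi \colon V(G) \to \{0,1\}$, forming $G_\varphi$, and looking up its isomorphism class in Table~\ref{tab:k3}. Check $|\mu(G)| \leq 1$ when $e(G) \leq 3$ and $|\mu(G)| \leq 0.9875$ when $e(G) > 3$.

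In Case~B ($|V(G)| \geq 10$), the absence of isolated vertices forces $e(G) \geq |V(G)|/2 \geq 5 > 3$, so only $|\mu(G)| \leq 0.9875$ is required. By the triangle inequality and Proposition~\ref{prop:reduction_to_bounded} (with $n_0 = 9$, $q = 2$), $|\mu(G)|$ is bounded by a maximum over $G' \subseteq K_9$ of an expression whose inner sum is computable by exactly the same coloring enumeration as in Case~A, with $\widetilde{c_H}$ taken by inspection from Table~\ref{tab:k3} (noting that the only disconnected graph appearing there is $2K_2$, which contracts to $P_2$) and the $DC_{2,9}$ factors read off from the table in Section~\ref{sec:fourier}. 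Verify the maximum is at most $0.9875$. The only real obstacle is the volume of the computation (on the order of $10^5$ isomorphism classes on $9$ vertices, each with $2^9 = 512$ colorings) rather than any conceptual difficulty; arithmetic is performed with integer/rational types and the code is provided in the ancillary files.
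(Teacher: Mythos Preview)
Your proposal is correct and follows essentially the same approach as the paper: part~(1) is read off directly from $c_\varnothing$, parts~(2) and~(3) are verified by exhaustively enumerating $2$-colorings of all graphs on at most $9$ vertices, and the case $|V(G)|>9$ is handled via Proposition~\ref{prop:reduction_to_bounded} with $n_0=9$ and $q=2$. Your additional remarks (that $\mu$ depends only on the non-isolated part, that $|V(G)|\ge 10$ with no isolated vertices forces $e(G)>3$, and the identification of $\widetilde{c_H}$ via the $2K_2\to P_2$ contraction) are correct elaborations but do not change the strategy.
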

\begin{proof}
For the empty graph, $G_q$ is always empty, and so $\mu(0) = c_\varnothing = 7$. Properties (2) and (3) are verified by a finite computation, iterating through all 2-colorings of all graphs with up to 9 vertices. Property (3) is verified for graphs with more than 9 vertices by applying Proposition \ref{prop:reduction_to_bounded} with $n_0 = 9$, again by iterating through all 2-colorings of all graphs with up to 9 vertices.
\end{proof}

\subsection{The case $t = 4$}
Our choices for the coefficients $c_H$ are listed in \cref{tab:K4}. For $t = 4$ one needs to choose a substantially larger number of coefficients to be nonzero. Despite this, there is still considerable flexibility. To make a reasonably concise presentation, we choose our list of coefficients $\{c_H\}$ to be constant on certain equivalence classes of $H$, so that we may simply write down a single choice for each class. The classes are defined as follows:
\begin{definition}
A \textit{block} of a graph $H$ is a maximal connected subgraph with at least one edge and no cut vertex. The collection of blocks of $H$ partitions $E(H)$. We say two graphs $H$ and $H'$ are equivalent and write $H \sim H'$ if the collection of blocks of $H$ and the collection of blocks of $H'$ are equal as multisets of unlabeled graphs.
\end{definition}

\begin{example}
The graphs \tikzGraph{.3}{\tikzPtwo} and \tikzGraph{.3}{\tikzEE} are equiavlent, with blocks \tikzGraph{.3}{\tikzE}, \tikzGraph{.3}{\tikzE}.
\end{example}
\begin{example}
The graph \tikzGraph{.15}{\tikzKthreeKthree} has blocks \tikzGraph{.15}{\tikzKthree},\tikzGraph{.15}{\tikzKthree}.
\end{example}

We will choose $c_H$ to be equal for all graphs within each equivalence class. This additional restriction is motivated in two ways. First, all graphs $H'$ involved in the computation of $\widetilde{c_H}$ (see Proposition \ref{prop:reduction_to_bounded}) satisfy $H' \sim H$, and so with this choice one has $\widetilde{c_H} = |c_H|$ for all $H$. Second, it turns out that for general $t$, if $H \sim H'$ and both are contained in a $K_t$, in order to satisfy the conclusions of \cref{prop:construction} one must choose $c_H = c_{H'}$ regardless. 
\newcolumntype{d}[1]{D{.}{.}{#1}}
\begin{table}[H]
	\centering
	\begin{tabular}{ld{3.3}}
		Blocks of $H$  &   \multicolumn{1}{c}{$c_H$}\\
		\midrule
		$\varnothing$ & 63\\
		\tikzGraph{.3}{\tikzE} & -30\\
		\tikzGraph{.3}{\tikzE},\tikzGraph{.3}{\tikzE} & 12\\
		\tikzGraph{.1732}{\tikzKthree} & -63\\
		\tikzGraph{.1732}{\tikzKthree},\tikzGraph{.3}{\tikzE} & 6 \\
		\tikzGraph{.3}{\tikzCfour} & -39\\
		\tikzGraph{.3}{\tikzE} ,\tikzGraph{.3}{\tikzE} ,\tikzGraph{.3}{\tikzE} ,\tikzGraph{.3}{\tikzE} & -1.926\\
		\tikzGraph{.3}{\tikzDiamond} & 12\\
		\tikzGraph{.1732}{\tikzKthree}, \tikzGraph{.3}{\tikzE} ,\tikzGraph{.3}{\tikzE} & 5.478\\
		\tikzGraph{.3}{\tikzCfour}, \tikzGraph{.3}{\tikzE} &4.293\\
		\tikzGraph{.3}{\tikzCfive} & -16.5\\
	\end{tabular}
	\quad
	\begin{tabular}{ld{3.3}}
		Blocks of $H$  &   \multicolumn{1}{c}{$c_H$}\\
		\midrule
		
		\tikzGraph{.3}{\tikzThetathree} & -20.3\\
		\tikzGraph{.3}{\tikzKthreetwo} & 24.75\\
		\tikzGraph{.3}{\tikzCfour},\tikzGraph{.3}{\tikzE} ,\tikzGraph{.3}{\tikzE} & -2.274 \\
		\tikzGraph{.3}{\tikzCfive}, \tikzGraph{.3}{\tikzE} & 2\\
		\tikzGraph{.3}{\tikzCsix} & -11.528\\
		\tikzGraph{.3}{\tikzKthreeoneone} & -22.928\\
		\tikzGraph{.1732}{\tikzTrap} & -12.8\\
		\tikzGraph{.3}{\tikzThetathree}, \tikzGraph{.3}{\tikzE} &-1.2\\
		\tikzGraph{.3}{\tikzKthreetwo}, \tikzGraph{.3}{\tikzE} &-4.362\\
		\tikzGraph{.3}{\tikzThetafour} & 1.138\\
		~&~\\
	\end{tabular}
	\caption{Coefficients for $t = 4$}\label{tab:K4}
\end{table}

\begin{lemma} With $c_H$ chosen as in \cref{tab:k3}, and $\mu(G) = (-1)^{e(G)}\sum_{H} c_H \P[G_q \cong H]$, one has
\begin{enumerate}
    \item $\mu(0) = 63$,
    \item $|\mu(G)| \le 1$ whenever $G$ has at most $6$ edges, and
    \item $|\mu(G)| \le 0.999$ whenever $G$ has more than $6$ edges.
\end{enumerate}
\end{lemma}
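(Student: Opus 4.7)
The plan is to mirror the proof for $t = 3$ exactly: property (1) is immediate, while (2) and (3) are reduced to finite computer verifications via \cref{prop:reduction_to_bounded}. For (1), the empty graph is the only outcome of $G_3$ when $G = \varnothing$, so $\mu(\varnothing) = c_\varnothing = 63$ by direct evaluation.

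For property (2) and for property (3) restricted to $v(G) \le 9$, I would enumerate all unlabeled graphs on at most nine vertices from the McKay database \cite{McK}. For each such $G$ I would compute $\P[G_3 \cong H]$ for every $H$ listed in \cref{tab:K4} by iterating over the $3^{v(G)}$ colorings of $V(G)$, assemble the signed weighted sum to obtain $\mu(G)$, and check the appropriate inequality. Because the coefficients in \cref{tab:K4} have at most three digits after the decimal point, the whole check can be carried out in exact integer arithmetic after clearing a common denominator of $1000$.

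For (3) on graphs with $v(G) > 9$, I would apply \cref{prop:reduction_to_bounded} with $q = 3$ and $n_0 = 9$. The crucial feature of \cref{tab:K4} is that $c_H$ depends only on the multiset of blocks of $H$, and this multiset is preserved under identifying any pair of disconnected vertices; consequently $\widetilde{c_H} = |c_H|$ for every $H$, and the proposition specializes to
\begin{equation*}
    |\mu(G)| \le \max_{G' \subseteq K_{9}} \frac{1}{3^{\kappa(G')-1}} \sum_{H} |c_H| \cdot \P[G'_3 \cong H] \cdot DC_{3,9}(v(H)),
\end{equation*}
where the factors $DC_{3,9}(v(H))$ are read off from the table following \cref{eqn:DC_def}. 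This bound is again a finite computation, carried out by iterating the same coloring enumeration over all subgraphs $G'$ of $K_9$.

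The step I expect to be the main obstacle is not the computer verification itself but the selection of the coefficients that lets the verification succeed. The target slack in (3) is only $10^{-3}$, two orders of magnitude tighter than the $0.0125$ headroom used for $t = 3$, which is why \cref{tab:K4} has many fractional entries rather than round numbers. The natural way to find feasible coefficients is to set up a linear program whose variables are the $c_H$, impose (1) and a strengthened form of (2) and (3) at every ``hard'' small graph, and iteratively expand the list of hard constraints until the bound from \cref{prop:reduction_to_bounded} at $n_0 = 9$ drops below $0.999$. Once good coefficients are in hand, the remaining verification is a routine exhaustive sweep; it is tractable because there are only about $2.7 \times 10^5$ unlabeled graphs on at most nine vertices, but it is much heavier than in the $t = 3$ case because both the number of relevant $H$ and the graph count have grown substantially.
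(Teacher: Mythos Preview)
Your proposal is correct and matches the paper's proof essentially line for line: (1) is immediate from $c_\varnothing=63$; (2) and (3) for $v(G)\le 9$ are handled by exhaustive enumeration over $3$-colorings of all graphs on at most nine vertices; and (3) for $v(G)>9$ follows from \cref{prop:reduction_to_bounded} with $q=3$, $n_0=9$, using the block-invariance of the coefficients to get $\widetilde{c_H}=|c_H|$. The paper records the numerical outcomes (the extremal small graph giving $|\mu(G)|=1-188406/(1000\cdot 3^9)$ and the large-graph bound $1-20/19683$), but the method is identical; your extra paragraph on how one searches for feasible coefficients is commentary rather than part of the proof.
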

\begin{proof}
For the empty graph, $G_q$ is always empty, and so $\mu(0) = c_\varnothing = 63$. Properties (2) and (3) are verified by a finite computation, iterating through all 3-colorings of all graphs with up to 9 vertices. On graphs with at most 9 vertices and more than 6 edges, the maximum value of $|\mu(G)|$ is $1 - 188406/(1000 \cdot 3^9)$, which is attained by \tikzGraph{.3}{\tikzExtremal}. Property (3) is verified for graphs with more than 9 vertices by applying Proposition \ref{prop:reduction_to_bounded} with $n_0 = 9$, by iterating through all 3-colorings of all graphs with up to 9 vertices, which gives a bound of $1-20/19683$.
\end{proof}

\section{Additional proofs} \label{sec:additional-proofs}
Here we collect the proofs deducing \cref{thm:main-intersecting} and \cref{cor:smaller_p} from \cref{thm:main-agreeing}. 
The following fact is well known (see \cite[Theorem 2.38]{Gri99} or \cite[Lemma 2.6]{EKL19}).
\begin{lemma}\label{lem:monotone}
Let $\mc F \subset 2^{[n]}$ be a nonempty monotone increasing family (that is, for all $X \in \mc F$ and $Y \supseteq X$, $Y \in \mc F$). For $0 < p < 1$ let $\mu_p(\mc F) = \sum_{X \in \mc F} p^{|X|}(1-p)^{n-|X|}$. Then $\log_p \mu_p(\mc F)$ is non-increasing in $p$.
\end{lemma}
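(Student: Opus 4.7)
The plan is to reformulate the monotonicity of $\log_p \mu_p(\mc F)$ as a functional inequality, prove its integer case by coupling, and extend to real exponents via a classical reliability-theoretic closure result.

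First, I would show that the lemma is equivalent to the inequality
\[
  \mu_{q^\alpha}(\mc F) \le \mu_q(\mc F)^\alpha
\]
for all $q \in (0,1)$ and all real $\alpha \ge 1$. Setting $p = q^\alpha$ (so $p \le q$ since $\alpha \ge 1$ and $q < 1$), the condition $\log_p \mu_p \ge \log_q \mu_q$ becomes $\log \mu_{q^\alpha}/(\alpha \log q) \ge \log \mu_q/\log q$; multiplying through by the negative quantity $\alpha \log q$ (and flipping the inequality) yields the displayed inequality.

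Second, I would prove the integer case $\alpha = k \ge 1$ by a transparent coupling. Let $X_1, \ldots, X_k$ be i.i.d.\ $q$-random subsets of $[n]$. The intersection $X := X_1 \cap \cdots \cap X_k$ is $q^k$-random, since each coordinate lies in $X$ independently with probability $q^k$. Monotonicity of $\mc F$ gives $\{X \in \mc F\} \subseteq \bigcap_j \{X_j \in \mc F\}$, so by independence $\mu_{q^k}(\mc F) = \P[X \in \mc F] \le \mu_q(\mc F)^k$.

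Third, I would extend to real $\alpha$ via the IFRA closure theorem of Birnbaum--Esary--Marshall. Couple i.i.d.\ lifetimes $T_1, \ldots, T_n \sim \mathrm{Exp}(1)$ and set $\tau := \sup\{t \ge 0 : \{i : T_i > t\} \in \mc F\}$. Then $\P[\tau > t] = \mu_{e^{-t}}(\mc F)$, so the desired inequality $\mu_{q^\alpha} \le \mu_q^\alpha$ (with $q = e^{-t}$) is precisely the IFRA condition $\P[\tau > \alpha t] \le \P[\tau > t]^\alpha$ for $\alpha \ge 1$. Each $T_i$ has constant (hence non-decreasing on average) failure rate, and $\tau$ is a monotone function of the $T_i$'s (increasing any $T_j$ can only extend the lifetime), so Birnbaum--Esary--Marshall yields that $\tau$ is IFRA.

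The main obstacle is the extension from integer to real exponents: the intersection coupling relies crucially on integer-many samples, and chaining the integer inequality across rational exponents (say by interpolating through a common base $q^{1/\ell}$) yields bounds whose directions do not combine favorably, so one genuinely needs a continuous device. The IFRA framework supplies the cleanest such bridge; an alternative would be an induction on $n$ via the recursion $\mu_p(\mc F) = (1-p)\mu_p(\mc F_n^-) + p\mu_p(\mc F_n^+)$, but this reduces the problem to a more technical two-variable functional inequality.
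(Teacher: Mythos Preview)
The paper does not supply a proof of this lemma; it is quoted as a known fact with citations to Grimmett's \emph{Percolation} (Theorem~2.38) and \cite{EKL19} (Lemma~2.6). Your argument is correct: the reformulation as $\mu_{q^\alpha}(\mc F)\le\mu_q(\mc F)^\alpha$ for $q\in(0,1)$ and $\alpha\ge1$ is equivalent to the claimed monotonicity, the intersection coupling cleanly handles integer $\alpha$, and invoking the Birnbaum--Esary--Marshall IFRA closure theorem with i.i.d.\ exponential component lifetimes is a legitimate way to obtain the inequality for all real $\alpha\ge1$ (the identification $\P[\tau>t]=\mu_{e^{-t}}(\mc F)$ is correct since the alive set $\{i:T_i>t\}$ is $e^{-t}$-random and decreasing in $t$).

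Two structural remarks. First, once you appeal to IFRA closure, your Step~2 is no longer load-bearing: the closure theorem already delivers the full real-$\alpha$ inequality, so the integer coupling serves only as motivation. Second, the standard proof of IFRA closure itself hinges on the inequality $h(p_1^\alpha,\ldots,p_n^\alpha)\le h(p_1,\ldots,p_n)^\alpha$ for monotone reliability functions $h$, which at $p_1=\cdots=p_n$ is exactly the lemma you are proving---so your route, while valid, outsources the essential step. The self-contained argument (as in the cited references) is the induction on $n$ you mention at the end: write $\mu_p(\mc F)=(1-p)\,\mu_p(\mc F_n^-)+p\,\mu_p(\mc F_n^+)$ with $\mu_p(\mc F_n^-)\le\mu_p(\mc F_n^+)$, apply the inductive hypothesis to each piece, and close with the scalar inequality $(1-q^\alpha)a^\alpha+q^\alpha b^\alpha\le\bigl((1-q)a+qb\bigr)^\alpha$ for $0\le a\le b$ and $\alpha\ge1$.
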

\begin{corollary}\label{cor:cross-intersecting}
Let $0 < p \le 1/2$ and let $({\mc F_1},{\mc F_2})$ be a cross-intersecting pair of subsets of $[N]$. Then $\mu_p(\mc F_1)\mu_p(\mc F_2) \le p^2$. 
\end{corollary}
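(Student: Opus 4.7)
The plan is to reduce to monotone increasing families, settle the symmetric case $p=1/2$ by a complementation argument, and then propagate to all $p \le 1/2$ using \cref{lem:monotone}.

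First I would replace each $\mc F_i$ by its upward closure $\mc F_i^{\uparrow} = \{X \subseteq [N] : Y \subseteq X \text{ for some } Y \in \mc F_i\}$. If $X \in \mc F_1^{\uparrow}$ and $Y \in \mc F_2^{\uparrow}$, then $X$ and $Y$ contain some $X' \in \mc F_1$ and $Y' \in \mc F_2$ respectively, and $X \cap Y \supseteq X' \cap Y' \neq \varnothing$. Hence $(\mc F_1^{\uparrow}, \mc F_2^{\uparrow})$ is still cross-intersecting, and since the closure only adds sets, $\mu_p(\mc F_i^{\uparrow}) \ge \mu_p(\mc F_i)$ for every $p \in (0,1)$. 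So WLOG both families are monotone increasing, and I may assume they are nonempty (else the bound is trivial).

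Next, for $p=1/2$, observe that for $X$ chosen uniformly in $2^{[N]}$, the events $\{X \in \mc F_1\}$ and $\{[N]\sm X \in \mc F_2\}$ are disjoint: otherwise $X$ and $[N]\sm X$ would be disjoint members of $\mc F_1$ and $\mc F_2$ respectively, contradicting cross-intersection. Since the uniform distribution is invariant under complementation, the probabilities of these two events are $\mu_{1/2}(\mc F_1)$ and $\mu_{1/2}(\mc F_2)$, giving $\mu_{1/2}(\mc F_1) + \mu_{1/2}(\mc F_2) \le 1$. By AM-GM,
\[
\mu_{1/2}(\mc F_1)\mu_{1/2}(\mc F_2) \le \tfrac{1}{4} = \paren{\tfrac{1}{2}}^{2},
\]
which settles the case $p = 1/2$.

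Finally, for $0 < p \le 1/2$ I would invoke \cref{lem:monotone}, which applies since each $\mc F_i$ is now nonempty and monotone increasing. The lemma gives $\log_p \mu_p(\mc F_i) \ge \log_{1/2} \mu_{1/2}(\mc F_i)$ for $i=1,2$. Summing and noting that $\log p < 0$, exponentiating yields
\[
\mu_p(\mc F_1)\mu_p(\mc F_2) \le p^{\log_{1/2}\paren{\mu_{1/2}(\mc F_1)\mu_{1/2}(\mc F_2)}} \le p^{\log_{1/2}(1/4)} = p^2,
\]
where the second inequality uses the $p=1/2$ bound together with the fact that $t \mapsto p^t$ is decreasing. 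The whole argument is short; the only point requiring care is tracking sign conventions in the base-$p$ logarithm when $p<1$, but there is no substantive obstacle.
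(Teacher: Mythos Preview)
Your proof is correct and follows essentially the same route as the paper: replace by upward closures, use the complementation argument at $p=1/2$ to get $\mu_{1/2}(\mc F_1)+\mu_{1/2}(\mc F_2)\le 1$ (hence the product is at most $1/4$ by AM--GM), and then invoke \cref{lem:monotone} to pass to all $p\le 1/2$. The only difference is that you spell out the logarithmic bookkeeping more explicitly than the paper does.
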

\begin{proof}
It suffices to prove this bound when ${\mc F_1}$ and ${\mc F_2}$ are replaced by their upwards closures (the upwards closure of $\mc F$ is the set $\{H: \exists G \in \mc F: G \subseteq H\}$).  By \cref{lem:monotone} it suffices to consider the $p = 1/2$ case. In this setting, consider complementary pairs of sets $(G,\overline G)$. For $({\mc F_1},{\mc F_2})$ to be an intersecting pair of families, we must have $\ind_{G \in \mc F_1} + \ind_{\overline G \in \mc F_2} \le 1$. Taking the expectation over all $G$, by linearity of expectation this becomes $\mu_{1/2}(\mc F_1)+\mu_{1/2}(\mc F_2) \le 1$, and so by AM-GM $\mu_{1/2}(\mc F_1)\mu_{1/2}(\mc F_2) \le 1/4$, which completes the proof.
\end{proof}
We continue with a lemma that allows us to strengthen the conclusions of \cref{thm:main-agreeing} from $K_t$-conjunctions to $K_t$-umvirates under the stronger hypothesis that $\mc F_1, \mc F_2$ are cross-$t$-chromatic-intersecting. Recall that $\mu_p(\mc F) := \sum_{G \in \mc F} p^{e(G)}(1-p)^{e(\overline G)}.$ For a set of edges $E_0 \subset \binom{[n]}{2}$, we say a family of graphs $\mc F$ is an \textit{$E_0$-conjunction} if there is some graph $H$ with $E(H) \subset E_0$ so that $\mc F = \{G: E_0 \subseteq E(G \triangle H)\}$. If $|E_0| = k$ we say $\mc F$ is a \textit{$k$-conjunction}.
\begin{lemma}\label{lem:must_be_umvirate}
Let $t \in \N$ and let $0 < p \le 1/2$. There exists a constant $\epsilon_{p,t} > 0$ so that the following holds. Let $(\mc F_1, \mc F_2)$ be a cross-$t$-chromatic-intersecting pair of families of graphs on $n$ labeled vertices, and let $\mc T \subset 2^{\binom{[n]}{2}}\times 2^{\binom{[n]}{2}}$ be a $k$-conjunction for $k \le 2\binom{t}{2}$. If $\mc T$ is not the Cartesian product of identical $K_t$-umvirates, then $\mu_p((\mc F_1 \times \mc F_2) \cap \mc T) \le (1-\epsilon_{p,t})\mu_p(\mc T)$.
\end{lemma}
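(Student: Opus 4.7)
The plan is to combine a reduction to monotone families with a structural dichotomy based on whether the minimal graphs $H_1, H_2$ of $\mathcal T_1, \mathcal T_2$ lie in $\mathcal F_1, \mathcal F_2$.

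First, I would replace each $\mathcal F_i$ by its upward closure. This preserves the cross-$t$-chromatic-intersecting property (enlarging a graph only enlarges its intersections), and weakly increases $\mu_p(\mathcal F_i \cap \mathcal T_i)$, so it suffices to assume both $\mathcal F_i$ are monotone. The product-space conjunction $\mathcal T$ factors as $\mathcal T = \mathcal T_1 \times \mathcal T_2$ where each $\mathcal T_i$ fixes some coordinates of the $i$-th factor; let $H_i$ denote the edges fixed to $1$ in $\mathcal T_i$, and set $\alpha_i = \mu_p(\mathcal F_i \cap \mathcal T_i)/\mu_p(\mathcal T_i)$, so the goal reads $\alpha_1 \alpha_2 \le 1 - \epsilon_{p,t}$. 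The generic sub-case, in which $|E_1| + |E_2| < 2\binom{t}{2}$ or some coordinate is fixed to $0$, yields $\mu_p(\mathcal T_1)\mu_p(\mathcal T_2) > p^{2\binom{t}{2}}$ by a quantitative amount depending on $p, t$, which combined with the global cross-intersecting bound $\mu_p(\mathcal F_1)\mu_p(\mathcal F_2) \le p^{2\binom{t}{2}}$ (obtained from \cref{thm:main-agreeing} via \cref{lem:monotone}) forces $\alpha_1 \alpha_2 < 1$ uniformly.

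The core case reduces to $\mathcal T_i = \{G : G \supseteq H_i\}$ with $|H_1| + |H_2| = 2\binom{t}{2}$. Here I would use the following dichotomy: by monotonicity of $\mathcal F_i$, the set $\mathcal T_i \setminus \mathcal F_i$ is downward-closed within $\mathcal T_i$, so either $\mathcal F_i \supseteq \mathcal T_i$ (in which case $\alpha_i = 1$) or $H_i \notin \mathcal F_i$. If both $H_1 \in \mathcal F_1$ and $H_2 \in \mathcal F_2$, then the cross-intersecting property applied to $(H_1, H_2)$ forces $H_1 \cap H_2$ to be non-$(t-1)$-colorable. Since $|H_1 \cap H_2| \le \min(|H_1|, |H_2|) \le \binom{t}{2}$, and the only graph on at most $\binom{t}{2}$ edges with chromatic number at least $t$ is $K_t$, this forces $H_1 = H_2 = K_t$, contradicting the hypothesis that $\mathcal T$ is not the identical $K_t$-umvirate product.

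In the remaining case WLOG $H_1 \notin \mathcal F_1$, so every minimal element of $\mathcal F_1 \cap \mathcal T_1$ strictly contains $H_1$. Applying the cross-intersecting condition to pairs of minimal elements $(H_1 \cup \{e\}, H_2 \cup \{e'\})$ with $e \notin H_1$ and $e' \notin H_2$, the intersection is contained in $(H_1 \cap H_2) \cup \{e, e'\}$, which has too few edges to contain $K_t$ for most choices (since $H_1 \cap H_2 \ne K_t$ by the excluded previous case). This forces many such minimal elements to be excluded from $\mathcal F_1$ or $\mathcal F_2$; iterating the argument across higher levels, and invoking compactness over the finitely many structural configurations of $(H_1, H_2)$ up to vertex relabeling (for fixed $t$), yields a uniform constant $\epsilon_{p,t} > 0$.

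The main obstacle is the quantitative propagation in this last step. The trivial exclusion $H_i \notin \mathcal F_i$ alone removes only $\mu_p(\{H_i\}) = p^{|H_i|}(1-p)^{\binom{n}{2}-|H_i|}$, which is exponentially small in $n$. To obtain a constant-factor improvement independent of $n$ one must carefully propagate the cross-intersecting constraint through multiple layers of the monotone family $\mathcal F_i \cap \mathcal T_i$, tracking precisely how the structure of $(H_1, H_2)$ dictates which edges are admissible at each level; this compounding of local restrictions into a macroscopic gap is the delicate combinatorial core of the proof.
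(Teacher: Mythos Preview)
Your proposal has a genuine gap in the final step. You correctly identify that the exclusion $H_1 \notin \mathcal F_1$ alone removes only an exponentially small (in $n$) mass, but your plan to ``iterate the argument across higher levels'' and invoke ``compactness over finitely many configurations of $(H_1,H_2)$'' is not a proof: you yourself call this ``the delicate combinatorial core,'' and it is never carried out. Nothing in the outline explains how local exclusions at the bottom of $\mathcal F_i \cap \mathcal T_i$ propagate to a macroscopic, $n$-independent loss, and compactness over $(H_1,H_2)$ does nothing to remove the $n$-dependence within each configuration.

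There is also a problem in your generic sub-case. You appeal to the global bound $\mu_p(\mathcal F_1)\mu_p(\mathcal F_2) \le p^{2\binom{t}{2}}$ from \cref{thm:main-agreeing}, but the lemma is stated for all $t \in \mathbb N$, whereas that theorem is only established for $t \in \{3,4\}$. Worse, at $p = 1/2$ a $k$-conjunction with $k = 2\binom{t}{2}$ has $\mu_{1/2}(\mathcal T) = 2^{-2\binom{t}{2}}$ regardless of which coordinates are fixed to $0$ or $1$, so the ``quantitative amount'' you rely on vanishes exactly at the endpoint $p = 1/2$ that the lemma must cover.

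The paper avoids both issues by a different and much shorter route that never splits into your umvirate/non-umvirate cases. It observes uniformly that if $\mathcal T = \mathcal T_1 \times \mathcal T_2$ is not the product of identical $K_t$-umvirates, then for any $(G_1,G_2) \in \mathcal T$ the graph $(G_1 \cap G_2) \cap (E_1 \cap E_2)$ is $(t-1)$-colorable, so the cross-$t$-chromatic-intersecting condition forces a shared edge outside $E_1 \cap E_2$. One then passes to a sub-conjunction $\mathcal T_1' \times \mathcal T_2' \subseteq \mathcal T$ obtained by imposing at most $2\binom{t}{2}$ additional constraints, chosen so that no pair in $\mathcal T_1' \times \mathcal T_2'$ can share an edge in $E_1 \triangle E_2$ either. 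On this sub-conjunction the families become cross-intersecting on the edges outside $E_1 \cup E_2$, and the elementary bound of \cref{cor:cross-intersecting} yields a factor $p^2$ loss there. This gives the explicit constant $\epsilon_{p,t} = (1-p^2)\,p^{4\binom{t}{2}}$, with no dependence on $n$ and no appeal to \cref{thm:main-agreeing}.
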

\begin{proof}
Any conjunction $\mc T$ admits a product decomposition $\mc T = \mc T_1 \times \mc T_2$, where $\mc T_i:2^{\binom{[n]}{2}} \to \{0,1\}$. We can write $\mc T_i = \{G: E_i \subseteq E(G \triangle H_i)\}$ for some graphs $H_i$ and sets of edges $E_i$ where $|E_1| + |E_2| \le 2\binom{t}{2}$ . If we are not in the case where $E_1 = E_2 \cong K_t$ and $H_1 = H_2 = \emptyset$, then for any $G_1 \in \mc T_1$, $G_2 \in \mc T_2$, we see $(G_1 \cap G_2) \cap (E_1 \cap E_2)$ has at most $\binom{t}{2}$ edges and is not a $K_t$, and therefore must be $(t-1)$-colorable, and so $G_1$ and $G_2$ must share an edge outside $E_1 \cap E_2$.

Now we construct subsets of each conjunction that ``disagree with the other conjunction as much as possible.'' Consider $E_2 \sm E_1$. Graphs in $\mc T_2$ must disagree with $H_2$ on this set, but graphs in $\mc T_1$ are unconstrained on this set of edges. Consider the subset of $\mc T_1$ given by graphs that agree with $H_2$ on this set:  $\mc T_1' := \{G \in \mc T_1: (G \triangle H_2) \cap (E_2 \sm E_1) = \emptyset\}$. Define $\mc T_2'$ similarly. Then since $|E_2 \sm E_1| \le 2\binom{t}{2}$, we see $\mu_p(\mc T_i') \ge p^{2\binom{t}{2}}\mu_p(\mc T)$, and each pair $G_1 \in \mc F_1 \cap \mc T_1'$, $G_2 \in \mc F_2 \cap \mc T_2'$ intersects on no edges in $E_1 \triangle E_2$. We know that $G_1$ and $G_2$ must share an edge outside $E_1 \cap E_2$, which must therefore lie outside $E_1\cup E_2$. Letting $\overline{\mc F_i} = \{E(G) \sm (E_1 \cup E_2): G \in \mc F_i \cap T_i'\}$ (removing duplicates), we have that $(\overline{\mc F_1},\overline{\mc F_2})$ is a cross-intersecting pair of families.  Restricting our domain to $\binom{[n]}{2} \sm (E_1 \cup E_2)$, by \cref{cor:cross-intersecting} we have $\mu_p(\overline{\mc F_1})\mu_p(\overline{\mc F_2}) < p^2$.  Pulling this back, we see $\mu_p(\mc F \cap (\mc T_1'\times \mc T_2')) \le p^2\mu_p(\mc T_1'\times \mc T_2')$.

To conclude, write $\mu_p(\mc F\cap \mc T) = \mu_p((\mc F \cap \mc T)\sm \mc (T_1'\times \mc T_2')) + \mu_p(\mc F \cap (\mc T_1'\times \mc T_2'))$. The first term is trivially bounded by $\mu_p(\mc T\sm  (\mc T_1'\times \mc T_2'))$, and we just saw that the second term is at most $p^2\mu_p(\mc T_1'\times \mc T_2')$. Adding in our lower bound $\mu_p(\mc T_i) \ge p^{-2\binom{t}{2}}$, we conclude $\mu_p(\mc F\cap \mc T) \le (1-(1-p^2)p^{4\binom{t}{2}}) \mu_p(\mc T)$,
which completes the proof.
\end{proof}
We can apply this to immediately deduce \cref{thm:main-intersecting} from \cref{thm:main-agreeing}.
\begin{lemma}\label{lem:main_intersecting}
Let $(\mc F_1, \mc F_2)$ be a cross-$t$-chromatic-intersecting pair of families of graphs on $n$ labeled vertices. Then equality holds in Theorem \ref{thm:main-agreeing} if and only if $\mc F_1 = \mc F_2$ is a $K_t$-umvirate, and for sufficiently small $\epsilon$ the $K_t$-conjunction $\mc T$ from (stability) must be a $K_t$-umvirate.
\end{lemma}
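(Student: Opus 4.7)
The plan is to leverage \cref{lem:must_be_umvirate} with the test set $\mc T := \mc T_0 \times \mc T_0$, where $\mc T_0$ is the $K_t$-conjunction produced by \cref{thm:main-agreeing}. Since a cross-$t$-chromatic-intersecting pair is automatically cross-$t$-chromatic-agreeing (the agreement graph $\overline{G_1 \triangle G_2}$ contains $G_1 \cap G_2$ as a subgraph), \cref{thm:main-agreeing} applies as a black box. The only task is to upgrade ``conjunction'' to ``umvirate'' in both the equality and the stability conclusions.

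For the equality case, \cref{thm:main-agreeing} gives $\mc F_1 = \mc F_2 = \mc T_0$ for some $K_t$-conjunction. Then $\mc F_1 \times \mc F_2 = \mc T$, so $\mu_p((\mc F_1 \times \mc F_2) \cap \mc T) = \mu_p(\mc T) > 0$. Writing $\mc T_0 = \{G : E(K_t) \subseteq E(G \triangle H_0)\}$ with $H_0 \subseteq E(K_t)$, failure of $\mc T_0$ to be a $K_t$-umvirate forces $H_0 \ne \varnothing$, placing us outside the exceptional case of \cref{lem:must_be_umvirate}. That lemma would then give $\mu_p(\mc T) \le (1-\epsilon_{p,t})\mu_p(\mc T)$, contradicting $\epsilon_{p,t} > 0$. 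Hence $\mc T_0$ must be a $K_t$-umvirate.

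For the stability case, \cref{thm:main-agreeing} yields a $K_t$-conjunction $\mc T_0$ with $|\mc F_i \triangle \mc T_0| \le C_t \epsilon\, 2^{\binom{n}{2}}$ for $i \in \{1,2\}$. Taking $p = 1/2$ and $\mc T := \mc T_0 \times \mc T_0$ we have $\mu_{1/2}(\mc T) = 4^{-\binom{t}{2}}$ and $\mu_{1/2}(\mc F_i \cap \mc T_0) \ge 2^{-\binom{t}{2}} - C_t \epsilon$, whence
\[ \mu_{1/2}((\mc F_1 \times \mc F_2) \cap \mc T) = \mu_{1/2}(\mc F_1 \cap \mc T_0)\,\mu_{1/2}(\mc F_2 \cap \mc T_0) \ge 4^{-\binom{t}{2}}\bigl(1 - 2^{1+\binom{t}{2}} C_t \epsilon\bigr). \]
If $\mc T_0$ were not an umvirate, \cref{lem:must_be_umvirate} would instead force $\mu_{1/2}((\mc F_1 \times \mc F_2) \cap \mc T) \le (1-\epsilon_{1/2,t}) 4^{-\binom{t}{2}}$. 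Comparing the two bounds forces $\epsilon \ge \epsilon_{1/2,t}/(2^{1+\binom{t}{2}} C_t)$, which contradicts the assumption that $\epsilon$ is sufficiently small; therefore $\mc T_0$ must be a $K_t$-umvirate.

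There is no substantive obstacle beyond the structural input of \cref{lem:must_be_umvirate}: once that lemma is available, the deduction amounts to bookkeeping that combines symmetric-difference bounds with the product-measure identity $\mu_{1/2}(A \times B) = \mu_{1/2}(A)\,\mu_{1/2}(B)$. The one point requiring attention is that \cref{thm:main-agreeing}'s stability outputs a \emph{single} conjunction $\mc T_0$ approximating both $\mc F_1$ and $\mc F_2$, which is exactly what makes the diagonal $\mc T_0 \times \mc T_0$ the right test object on which to invoke \cref{lem:must_be_umvirate}.
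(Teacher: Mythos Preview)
Your proof is correct and follows essentially the same route as the paper: both apply \cref{lem:must_be_umvirate} to the product $\mc T_0 \times \mc T_0$ at $p = 1/2$ to rule out non-umvirate conjunctions in the stability regime, and the bookkeeping you supply matches the paper's terse ``then we cannot have satisfied the hypothesis of (stability) for sufficiently small $\epsilon$.'' The only cosmetic difference is in the equality case: the paper handles it by the one-line remark that a non-umvirate $K_t$-conjunction cannot itself be $t$-chromatic-intersecting (the graph $T \setminus H_0$ lies in $\mc T_0$ and is $(t-1)$-colorable), whereas you invoke \cref{lem:must_be_umvirate} there too---but since that lemma is built on precisely this observation, the two arguments coincide in substance.
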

\begin{proof}
The upper bound and uniqueness follow immediately by noting that the only $K_t$-conjunction that is $K_t$-intersecting is a $K_t$-umvirate. For stability, if \cref{thm:main-agreeing} outputs some $\mc T$ that is not a $K_t$-umvirate, we can apply \cref{lem:must_be_umvirate} to $\mc T \times \mc T$ with $p = 1/2$ and deduce $|\mc F_1\cap \mc T||\mc F_2\cap \mc T| \le (1-c_t)|\mc T|^2.$ But then we cannot have satisfied the hypothesis of \cref{thm:main-agreeing} (stability) for sufficiently small $\epsilon$.
\end{proof}
\begin{proof}[Proof of \cref{thm:main-intersecting}]
\cref{lem:main_intersecting} completes the proof of \cref{thm:main-intersecting} in the more general cross-$t$-chromatic-intersecting setting, for sufficiently small $\epsilon$. Choosing a new $C_t$ large enough makes the conclusion of (stability) trivially satisfied for large $\epsilon$, which completes the deduction.
\end{proof}

Now that we have handled the $p=1/2$ case for cross-$t$-chromatic-intersecting families, we next apply the results of \cite{EKL19} to obtain results for all $p < 1/2$ to yield \cref{cor:smaller_p}.

\begin{corollary}
    Let $t \in \{3,4\}$, $0 < p < 1/2$, and let $(\mc F_1, \mc F_2)$ be a cross-$t$-chromatic-intersecting pair of families of graphs on $n$ labeled vertices. Then $\mu_p(\mc F_1)\mu_p(\mc F_2) \le p^{2\binom{t}{2}}$, with equality if and only if $\mc F_1 = \mc F_2$ is a $K_t$-umvirate.
\end{corollary}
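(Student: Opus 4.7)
The plan is to combine the $p = 1/2$ result (the cross-$t$-chromatic-intersecting case of \cref{lem:main_intersecting}, which gives $\mu_{1/2}(\mc F_1)\mu_{1/2}(\mc F_2) \le 2^{-2\binom{t}{2}}$ with equality iff $\mc F_1 = \mc F_2$ is a $K_t$-umvirate) with the monotonicity of $p$-biased measures on monotone families (\cref{lem:monotone}) to push down to $p < 1/2$. The first step is to reduce to monotone families. Let $\mc F_i^{\uparrow} = \{G : G \supseteq G' \text{ for some } G' \in \mc F_i\}$. If $G_1 \in \mc F_1^\uparrow$ and $G_2 \in \mc F_2^\uparrow$ contain $G_1' \in \mc F_1$ and $G_2' \in \mc F_2$ respectively, then $G_1 \cap G_2 \supseteq G_1' \cap G_2'$, which is not $(t-1)$-colorable; so $(\mc F_1^\uparrow, \mc F_2^\uparrow)$ is still cross-$t$-chromatic-intersecting. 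Moreover $\mu_p(\mc F_i) \le \mu_p(\mc F_i^\uparrow)$, so it suffices to prove the upper bound under the extra assumption that $\mc F_1, \mc F_2$ are monotone increasing.

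For monotone $\mc F_i$, set $\alpha_i := \log_{1/2}\mu_{1/2}(\mc F_i) \ge 0$. Viewing graphs on $n$ vertices as subsets of $\binom{[n]}{2}$, \cref{lem:monotone} with $N = \binom{n}{2}$ gives that $\log_p \mu_p(\mc F_i)$ is non-increasing in $p$, hence for $p \le 1/2$
\[
\log_p \mu_p(\mc F_i) \ge \log_{1/2} \mu_{1/2}(\mc F_i) = \alpha_i.
\]
Since $\log p < 0$, this rearranges to $\mu_p(\mc F_i) \le p^{\alpha_i}$. Multiplying,
\[
\mu_p(\mc F_1)\mu_p(\mc F_2) \le p^{\alpha_1 + \alpha_2}.
\]
The $p=1/2$ case of the bound, namely \cref{lem:main_intersecting}, says $(1/2)^{\alpha_1 + \alpha_2} = \mu_{1/2}(\mc F_1)\mu_{1/2}(\mc F_2) \le 2^{-2\binom{t}{2}}$, i.e., $\alpha_1 + \alpha_2 \ge 2\binom{t}{2}$. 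Because $0 < p < 1$, this yields $p^{\alpha_1+\alpha_2} \le p^{2\binom{t}{2}}$, establishing the upper bound.

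For the equality case, note first that any $K_t$-umvirate $\mc T$ satisfies $\mu_p(\mc T) = p^{\binom{t}{2}}$, so taking $\mc F_1 = \mc F_2 = \mc T$ achieves the bound. Conversely, suppose $\mu_p(\mc F_1)\mu_p(\mc F_2) = p^{2\binom{t}{2}}$. Then the chain of inequalities above is tight at every step. Tightness of $\mu_p(\mc F_i) \le \mu_p(\mc F_i^\uparrow)$ forces $\mc F_i = \mc F_i^\uparrow$ (the upwards closure contains $\mc F_i$, and any extra graph $G \in \mc F_i^\uparrow \setminus \mc F_i$ would contribute $p^{e(G)}(1-p)^{e(\overline G)} > 0$), so both families are monotone. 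Tightness of $p^{\alpha_1+\alpha_2} \le p^{2\binom{t}{2}}$ forces $\alpha_1 + \alpha_2 = 2\binom{t}{2}$, i.e., equality in the $p=1/2$ bound, at which point the uniqueness statement of \cref{lem:main_intersecting} gives $\mc F_1 = \mc F_2$ is a $K_t$-umvirate.

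The main obstacle, if any, is just the careful bookkeeping with the sign of $\log p$ when translating the monotonicity of $\log_p \mu_p$ into an inequality on $\mu_p$ itself; the substantive content is already packaged in \cref{lem:monotone} and in the $p = 1/2$ version proved earlier in \cref{lem:main_intersecting}.
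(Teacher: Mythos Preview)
Your proof is correct and follows essentially the same approach as the paper's own proof, which is a terse two-sentence argument: reduce to monotone families via upwards closure, then combine the $p=1/2$ result with \cref{lem:monotone}. You have simply spelled out the details of how the monotonicity of $\log_p\mu_p$ transfers the $p=1/2$ bound to $p<1/2$, and traced the equality case back through the chain; there is no substantive difference.
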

\begin{proof}
The upwards closures of $\mc F_1$ and $\mc F_2$ also form a cross-$t$-chromatic-intersecting pair of families, so for the original pair to be maximal both families be upwards-closed. The result then follows from the $p=1/2$ version above and \cref{lem:monotone}.
\end{proof}

In order to deduce stability we apply the following powerful result from \cite{EKL19}.

\begin{theorem}[{\cite[Theorem 3.1]{EKL19}}]\label{thm:EKL}
Let $t\in \N$ and $0 < p < 1/2$. Then exist constants $C > 4$ and $c > 0$ so that the following holds. Let $f : \F_2^N \to \{0,1\}$ be increasing with $\E[f] \le 2^{-t}$ and
\begin{equation*}
    \mu_p(\mc F) \ge \begin{cases}
    p^t\Big(1-c\Big(\frac 12-p\Big)\Big)& p \ge 1/C\\
    Cp^{t+1} & p < 1/C.
    \end{cases}
\end{equation*}
For all $\epsilon > 0$, if 
\begin{equation*}
    \mu_p(f) \ge p^t\Big(1-\epsilon^{\log_p(1-p)}\Big)+p^{t-1}(1-p)\epsilon,
\end{equation*}
then there is some $t$-umvirate $\mc G$ such that $\mu_p(\mc F \sm \mc G) \le (1-p)p^{t-1}\epsilon.$
\end{theorem}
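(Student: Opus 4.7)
\medskip

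The plan is to use the standard framework from the analysis of Boolean functions in the $p$-biased setting, following the broad strategy that underlies stability versions of Friedgut's junta theorem. At a high level, the argument runs: (i) show spectral concentration on low levels via $p$-biased hypercontractivity, (ii) approximate $f$ by a small junta using a Kindler--Safra type theorem, then (iii) use monotonicity and the lower bound on $\mu_p(f)$ to force the junta to be a $t$-umvirate.

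First I would expand $f$ in the $p$-biased Efron--Stein basis $\{\chi_S^p\}$, so that $\mu_p(f) = \hat f(\emptyset)$ and $\sum_S \hat f(S)^2 = \lVert f\rVert_2^2 = \mu_p(f)$. The guiding intuition is the extremal calculation: a $t$-umvirate $\mc G$ satisfies $\mu_p(\mc G) = p^t$, and its spectrum is supported entirely in degrees $\le t$. The hypothesis $\mu_p(f) \ge p^t(1-\epsilon^{\log_p(1-p)}) + p^{t-1}(1-p)\epsilon$ together with $\E[f] \le 2^{-t}$ should force most of the $L^2$ mass of $f$ onto the low levels. Quantifying this requires a $p$-biased hypercontractive inequality (of the Keevash--Lifshitz--Long--Minzer / Bourgain style) applied to an appropriate ``tail'' of $f$: schematically, if $\mu_p(f)$ is close to extremal but $f$ has too much spectral weight on degrees $> t$, one derives a contradiction to the bound $\E[f] \le 2^{-t}$ by expanding $\lVert f\rVert_2^2 = \lVert f\rVert_1$ and applying hypercontractivity to the high-degree part.

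Next, having shown that $\sum_{|S|>t} \hat f(S)^2$ is small (with a polynomial loss of the form $\epsilon^{\log_{1-p}p}$ coming from the hypercontractive exponent), I would apply the $p$-biased Kindler--Safra theorem to replace $f$ by a Boolean function $g$ depending on a bounded number $K = K(p,t)$ of coordinates with $\mu_p(f \triangle g) \le O_p(\epsilon)$. The final step would upgrade $g$ to a $t$-umvirate: because $f$ is monotone and the approximation error is small in $\mu_p$, the monotone envelope of $g$ is also close to $f$, so I may assume $g$ is monotone. Then $g$ is a monotone Boolean junta on $K$ coordinates with $\mu_p(g) \gtrsim p^t$, and among such functions, a direct enumeration on $K$ coordinates (together with the $\E[g] \le 2^{-t}$ constraint) shows that the only one within $o(p^t)$ of the extremal value is a $t$-umvirate $\mc G$. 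Tracking constants then gives $\mu_p(f \sm \mc G) \le (1-p)p^{t-1}\epsilon$.

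The hardest step is the $p$-biased hypercontractive estimate in step (i), especially handling the two regimes $p \ge 1/C$ and $p < 1/C$ separately: the small-$p$ regime requires the stronger Bourgain-type inequality (costing the $Cp^{t+1}$ hypothesis) since the standard biased noise operator is not hypercontractive all the way down. The second delicate point is getting the sharp dependence $\epsilon^{\log_p(1-p)}$ in the final bound, which comes from optimizing the degree cutoff against the hypercontractive norm inequality, so one must be careful to track how the spectral concentration scales as $p \to 0$ rather than treating $p$ as a constant. Once those inputs are in place, steps (ii) and (iii) are essentially black-box applications of existing Kindler--Safra machinery and a finite case analysis.
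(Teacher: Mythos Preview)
This theorem is not proved in the paper: it is quoted verbatim as \cite[Theorem~3.1]{EKL19} and used as a black box in the deduction of Corollary~1.6. So there is no ``paper's own proof'' to compare your sketch against; the authors simply invoke the result. Your outline is a reasonable high-level summary of the kind of argument Ellis--Keller--Lifshitz actually give, but for the purposes of this paper no proof is expected or provided.
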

\begin{corollary}
Let $t \in \N$ and let $0 < p < 1/2$. Then there exists a constant $C_{p,t} > 0$ so that the following holds. Let $(\mc F_1, \mc F_2)$ be a cross-$t$-chromatic-intersecting pair of families of graphs on $n$ labeled vertices. Then if $\mu_p(\mc F)\ge (1-\epsilon)p^{\binom{t}{2}}$, then there exists some $K_t$-umvirate $\mc T$ so that $\mu_p(\mc F_i \sm \mc T) \le C_{p,t}\epsilon^{\log_{(1-p)}(p)}$ for $i \in \{1,2\}$.
\end{corollary}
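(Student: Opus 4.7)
The plan is to combine the $p=1/2$ stability already available, the transfer result \cref{thm:EKL} from \cite{EKL19}, and \cref{lem:must_be_umvirate} to glue together the two umvirate structures into a single $K_t$-umvirate. First I would pass to upward closures: replacing each $\mc F_i$ by its upward closure only increases $\mu_p(\mc F_i)$, preserves the cross-$t$-chromatic-intersecting property, and makes each indicator function monotone. Combining the hypothesis (interpreted as $\mu_p(\mc F_1)\mu_p(\mc F_2) \ge (1-\epsilon)p^{2\binom{t}{2}}$) with the product upper bound $\mu_p(\mc F_1)\mu_p(\mc F_2) \le p^{2\binom{t}{2}}$ gives $\mu_p(\mc F_i) \ge (1-\epsilon)p^{\binom{t}{2}}$ for each $i$. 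Also, by \cref{lem:monotone} together with the $p=1/2$ upper bound $\mu_{1/2}(\mc F_i) \le 2^{-\binom{t}{2}}$, the mean-upper-bound hypothesis of \cref{thm:EKL} is satisfied.

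Next I would apply \cref{thm:EKL} to each $f_i := \ind_{\mc F_i}$ with the ``$t$'' of that theorem set to $\binom{t}{2}$. Choose $\delta = c\,\epsilon^{\log_{(1-p)}(p)}$ for a sufficiently small constant $c = c(p,t)$, so that the EKL hypothesis $\mu_p(\mc F_i) \ge p^{\binom{t}{2}}(1-\delta^{\log_p(1-p)}) + p^{\binom{t}{2}-1}(1-p)\delta$ is satisfied. This works because $\delta^{\log_p(1-p)}$ simplifies to $c^{\log_p(1-p)}\epsilon$, dominating the linear-in-$\delta$ term for small $\epsilon$ since $\log_{(1-p)}(p) > 1$. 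The theorem outputs a $\binom{t}{2}$-umvirate $\mc G_i$ with $\mu_p(\mc F_i \sm \mc G_i) \le (1-p)p^{\binom{t}{2}-1}\delta \le C_0\,\epsilon^{\log_{(1-p)}(p)}$ for some $C_0 = C_0(p,t)$.

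The main step is then to show $\mc G_1 = \mc G_2$ and that this common umvirate is actually a $K_t$-umvirate. I would apply \cref{lem:must_be_umvirate} to the conjunction $\mc T_0 := \mc G_1 \times \mc G_2$, which is a $k$-conjunction with $k \le 2\binom{t}{2}$. Suppose for contradiction that $\mc T_0$ is not the Cartesian product of identical $K_t$-umvirates. Then $\mu_p((\mc F_1 \times \mc F_2) \cap \mc T_0) \le (1-\epsilon_{p,t})\mu_p(\mc T_0) = (1-\epsilon_{p,t})p^{2\binom{t}{2}}$. On the other hand, $\mu_p(\mc F_i \cap \mc G_i) \ge \mu_p(\mc F_i) - C_0\,\epsilon^{\log_{(1-p)}(p)} \ge (1 - O(\epsilon^{\log_{(1-p)}(p)}))\,p^{\binom{t}{2}}$, and multiplying gives $\mu_p((\mc F_1 \cap \mc G_1)\times(\mc F_2 \cap \mc G_2)) \ge (1 - O(\epsilon^{\log_{(1-p)}(p)}))\,p^{2\binom{t}{2}}$, which contradicts the previous estimate once $\epsilon$ is sufficiently small. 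Hence $\mc G_1 = \mc G_2$ is a $K_t$-umvirate $\mc T$, and $\mu_p(\mc F_i \sm \mc T) \le C_0\,\epsilon^{\log_{(1-p)}(p)}$ is exactly the desired conclusion.

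The main obstacle will be calibrating $\delta$ and $\epsilon$ precisely enough to verify the EKL hypothesis and then using \cref{lem:must_be_umvirate} to show that the two independently-produced juntas from EKL actually align into a single $K_t$-umvirate rather than arbitrary $\binom{t}{2}$-umvirates supported on different sets of edges; the multiplicative comparison in the last paragraph is what forces this alignment. For the range of $\epsilon$ too large for the argument above to apply, one trivially enlarges $C_{p,t}$ so that the bound $C_{p,t}\,\epsilon^{\log_{(1-p)}(p)}$ exceeds $1$, making the conclusion vacuous.
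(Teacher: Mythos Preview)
Your approach has a genuine gap: you cannot apply \cref{thm:EKL} to each $\mc F_i$ separately, because the hypothesis $\E[f_i] = \mu_{1/2}(\mc F_i) \le 2^{-\binom{t}{2}}$ need not hold for an individual family in a cross-intersecting pair. For instance, with $t = 3$, take two triangles $T_1, T_2$ sharing an edge, set $\mc F_1 = \{G : T_1 \cup T_2 \subseteq G\}$ and $\mc F_2 = \{G : T_1 \subseteq G \text{ or } T_2 \subseteq G\}$; this pair is cross-triangle-intersecting yet $\mu_{1/2}(\mc F_2) = 7/32 > 1/8$. Only the \emph{product} bound $\mu_{1/2}(\mc F_1)\mu_{1/2}(\mc F_2) \le 2^{-2\binom{t}{2}}$ is available. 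For the same reason, your deduction ``$\mu_p(\mc F_i) \ge (1-\epsilon)p^{\binom{t}{2}}$'' from the two product inequalities is unjustified: without an individual upper bound $\mu_p(\mc F_j) \le p^{\binom{t}{2}}$ on the other factor, knowing $(1-\epsilon)p^{2\binom{t}{2}} \le \mu_p(\mc F_1)\mu_p(\mc F_2) \le p^{2\binom{t}{2}}$ says nothing about each factor separately.

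The paper sidesteps this by applying \cref{thm:EKL} \emph{once} to the product family $\mc F = \mc F_1 \times \mc F_2 \subseteq 2^{\binom{[n]}{2}} \times 2^{\binom{[n]}{2}}$, for which the required bound $\mu_{1/2}(\mc F) \le 2^{-2\binom{t}{2}}$ does hold. This yields a single $2\binom{t}{2}$-umvirate $\mc T$ in the product space, and then \cref{lem:must_be_umvirate} (applied exactly as you do) forces $\mc T$ to be a Cartesian product of identical $K_t$-umvirates for small $\epsilon$. So the fix is simply to replace ``apply EKL to each factor'' with ``apply EKL to the product''; your contradiction argument via \cref{lem:must_be_umvirate} and the large-$\epsilon$ cleanup then go through unchanged.
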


\begin{proof}
By Lemma \ref{lem:main_intersecting}, $\mc F = \mc F_1 \times \mc F_2$ has size $\mu_{1/2}(\mc F)\le 2^{-2\binom{t}{2}}$. Then by \cref{thm:EKL} if $\mu_p(\mc F) \ge (1-\epsilon)p^{2\binom{t}{2}}$, there is some $2\binom{t}{2}$-umvirate $\mc T$ so that $\mu_p(\mc F \sm \mc T) \le C'_{p,t}\epsilon^{\log_{(1-p)}(p)}$. If $\mc T$ is not the Cartesian product of identical $K_t$-umvirates, apply \cref{lem:must_be_umvirate}. Then $\mu_p(\mc F_1)\mu_p(\mc F_2) = \mu_p(\mc F \cap \mc T)+\mu_p(\mc F \sm \mc T) \le (1-\epsilon_{p,t})\mu_p(\mc T) + C'_{p,t}\epsilon^{\log_{(1-p)}(p)} < (1-\epsilon)p^{2\binom{t}{2}}$ for $\epsilon$ sufficiently small. Choosing $C_{p,t}$ large enough makes the conclusion trivial for large $\epsilon$, which completes the proof.
\end{proof}

\section{Further work} \label{sec:discussion}
Given these results, we are optimistic that this framework is strong enough to obtain optimal bounds on $K_t$-intersecting families for each fixed $t$ (given enough computational power). Despite this, proving a result for all $t$ will require some new insight. We see two potential directions to proving a result in full generality. The first is to bypass the $c_H$ setup entirely by writing down a $\mu$ which satisfies the conclusions of \cref{prop:construction}.
We see no obvious candidates for such $\mu$, and the constructions of \cite{EFF12} and this paper provide no clear pattern for generalization. The second option would be to write down a general form for the coefficients $c_H$, and use some technique akin to \cref{prop:reduction_to_bounded} to reduce checking \cref{prop:construction} to a bounded computation that can be done by hand. This also seems difficult, as again there is no clear pattern for generalization among the feasible solutions for $t = 3,4$. 

The \textit{tetrahedron} is the complete 3-uniform hypergraph on 4 vertices. In correspondence, Ellis, Filmus, and Friedgut suggested that the following version of \cref{prob:intersecting} would be interesting.
\begin{problem}
What is the maximal size of a tetrahedron-intersecting family of 3-uniform hypergraphs on $n$ labeled vertices?
\end{problem}

\noindent
\bibliographystyle{amsplain0}

\bibliography{intersecting}

\end{document}